\DeclareMathOperator{\ASM}{{\tt ASM}}
\DeclareMathOperator{\init}{{\tt in}}
\DeclareMathOperator{\supp}{{\tt Supp}}
\DeclareMathOperator{\del}{{\tt del}}
\DeclareMathOperator{\lk}{{\tt lk}}
\DeclareMathOperator{\codim}{{\tt codim}}
\DeclareMathOperator{\ess}{{\tt Ess}}
\DeclareMathOperator{\Dom}{{\tt Dom}}
\DeclareMathOperator{\rk}{rk}
\DeclareMathOperator{\asm}{\tt ASM}
\DeclareMathOperator{\Perm}{Perm}
\theoremstyle{plain}
\newtheorem{thm}{Theorem}[section]
\newtheorem{prop}[thm]{Proposition}
\newtheorem{cor}[thm]{Corollary}
\newtheorem{lemma}[thm]{Lemma}
\newtheorem{conjecture}[thm]{Conjecture}
\newenvironment{example}
  {\pushQED{\qed}\examplex}
  {\popQED\endexamplex}
\theoremstyle{definition}
\newtheorem{definition}[thm]{Definition}
\theoremstyle{remark}
\definecolor{darkblue}{rgb}{0.0,0,0.7}
\newcommand{\newword}[1]{\textcolor{darkblue}{\textbf{\emph{#1}}}}
\title{Some Algebraic Properties of ASM Varieties}
\author[Axelrod-Freed]{Ilani Axelrod-Freed}
\address{Department of Mathematics, Massachusetts Institute of Technology, Cambridge, MA}
\email{ilani\_af@mit.edu}
\author[Hao]{Hanson Hao}
\address{Department of Mathematics, University of California, Berkeley, Berkeley, CA}
\email{hhao@berkeley.edu}
\author[Kendall]{Matthew Kendall}
\address{Department of Mathematics, Stanford University, Stanford, CA}
\email{matthewkendall@stanford.edu}
\author[Klein]{Patricia Klein}
\address{Department of Mathematics, Texas A\&M University, College Station, TX}
\email{pjklein@tamu.edu}
\author[Luo]{Yuyuan Luo}
\address{Department of Mathematics, Princeton University, Princeton, NJ}
\email{luo.yuyuan@princeton.edu}
\thanks{All authors were partially supported by RTG grant NSF grant DMS-1745638.  PK was partially funded by NSF grant DMS-2246962 and by the Travel Support for Mathematicians gift MP-TSM-00002939 from the Simons Foundation. PK worked on this project while she was a member at the Institute for Advanced Study with support from the Bob Moses Fund. She also worked on this project while she was a long-term visitor at the Fields Institute.  She thanks both the IAS and Fields for their hospitality and support.}
\begin{document}

\begin{abstract}
    Fulton's matrix Schubert varieties are affine varieties that arise in the study of Schubert calculus in the complete flag variety.  Weigandt showed that arbitrary intersections of matrix Schubert varieties, now called ASM varieties, are indexed by alternating sign matrices (ASMs), objects with a long history in enumerative combinatorics. It is very difficult to assess Cohen--Macaulayness of ASM varieties or to compute their codimension, though these properties are well understood for matrix Schubert varieties due to work Fulton.  In this paper we study these properties of ASM varieties with a focus on the relationship between a pair of ASMs and their direct sum.  We also consider ASM pattern avoidance from an algebro-geometric perspective.
\end{abstract}

\maketitle

\tableofcontents

\section{Introduction}
Much of modern Schubert calculus centers on Schubert varieties in the complete flag variety.  These Schubert varieties are closely related to affine varieties called matrix Schubert varieties, introduced by Fulton \cite{Ful92}.  Specifically, Schubert polynomials, introduced by Lascoux and Sch{\"u}tzenberger \cite{LS82}, form an additive basis of the cohomology ring of the complete flag variety.  Expanding products of Schubert polynomials in this basis is arguably the major open problem in Schubert calculus today.  Knutson and Miller \cite{KM05} showed that Schubert polynomials are also the multidegrees  of matrix Schubert varieties.  Moreover, a Schubert variety and its associated matrix Schubert variety share a codimension \cite{Ful92}; the Cohen--Macaulay property of all Schubert varieties can be inferred from the Cohen--Macaulay property of all matrix Schubert varieties \cite{Ful92, KM05}; a Schubert variety is Gorenstein if and only if its associated matrix Schubert variety is \cite{WY06}.

In the course of studying matrix Schubert varieties, one is naturally led to study intersections of several of them, for example as they arise in Gr\"obner degenerations and in inductive computations \cite{KW21}.  Weigandt \cite{Wei17} showed that arbitrary intersections of matrix Schubert varieties are indexed by what are called alternating sign matrices (ASMs), a generalization of permutation matrices.  Permutation matrices index Schubert varieties and matrix Schubert varieties.  ASMs were not invented in order to perform this indexing.  Rather, they are combinatorial objects with their own rich history:  In 1983, Mills, Robbins, and Rumsey \cite{MRR83} gave a conjecture for a closed form for the number of $n \times n$ ASMs.
The original proof was given by Zeilberger \cite{Zei96}, and a second proof was given by
Kuperberg \cite{Kup96} using the six-vertex model of statistical mechanics.

Matrix Schubert varieties are Cohen--Macaulay and admit a codimension formula which can be read easily from their indexing permutation \cite{Ful92}.  These facts are regularly exploited in studying the Schubert and Grothendieck polynomials of their associated permutations and in recent computations of Castelnuovo--Mumford regularity (see, e.g., \cite{RRRSDW21, PSW24}).  ASM varieties do not in general enjoy either of these advantages.

The purpose of this paper is to study the codimension and potential Cohen--Macaulayness of ASM varieties.  Let $X_A$ denote the ASM variety associated to $A$.  We focus on the embeddings $\asm(n) \hookrightarrow \asm(n+1)$ determined by $A \mapsto 1 \oplus A$ and, more generally, on ASMs obtained as direct sums of other ASMs.  This embedding is natural in light of the role its restruction to $S_n$ plays in the theory of Stanley symmetric functions (\cite{Sta84, Mac91}).

Let $X_A$ denote the ASM variety determined by the ASM $A$.  We show the following:

\begin{thm}[\Cref{prop:codimOfDirectSum}]
\hspace{1cm}

    \begin{enumerate}
        \item $\codim(X_{A_1 \oplus A_2}) = \codim(X_{A_1}) + \codim(X_{A_2})$.
        \item $X_{A_1 \oplus A_2}$ is equidimensional if and only if $X_{A_1}$ and $X_{A_2}$ are both equidimensional.
    \end{enumerate}
\end{thm}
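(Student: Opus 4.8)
The strategy is to pin down the irreducible components of $X_{A_1 \oplus A_2}$ in terms of those of $X_{A_1}$ and $X_{A_2}$, after which both statements become bookkeeping. Recall (from Weigandt's work and the theory of matrix Schubert varieties) that the irreducible components of an ASM variety $X_A$ are the matrix Schubert varieties $X_w$ as $w$ ranges over the permutations minimal in Bruhat order subject to $X_w \subseteq X_A$ (equivalently, minimal subject to $r_w(i,j) \le r_A(i,j)$ for all $i,j$, where $r$ denotes the corner‑rank function), and that $\codim X_w = \ell(w)$ by Fulton. So everything reduces to the following, where $A_1$ is taken to be $m \times m$: \emph{the Bruhat‑minimal permutations $\sigma$ with $X_\sigma \subseteq X_{A_1 \oplus A_2}$ are exactly the permutations $w_1 \oplus w_2$ with $w_1$ minimal subject to $X_{w_1} \subseteq X_{A_1}$ and $w_2$ minimal subject to $X_{w_2} \subseteq X_{A_2}$.} Granting this, every component of $X_{A_1 \oplus A_2}$ is an $X_{w_1 \oplus w_2}$ with $\codim X_{w_1 \oplus w_2} = \ell(w_1 \oplus w_2) = \ell(w_1) + \ell(w_2)$; minimizing over components gives $\codim X_{A_1 \oplus A_2} = \min_{w_1} \ell(w_1) + \min_{w_2} \ell(w_2) = \codim X_{A_1} + \codim X_{A_2}$, which is (1), and $X_{A_1 \oplus A_2}$ is equidimensional iff $\ell(w_1) + \ell(w_2)$ is independent of the admissible pair $(w_1, w_2)$ iff each of $\ell(w_1)$ and $\ell(w_2)$ is separately constant iff both $X_{A_1}$ and $X_{A_2}$ are equidimensional, which is (2).

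To prove the boxed assertion I would first record the block formula $r_{A_1 \oplus A_2}(i,j) = r_{A_1}(i,j)$ for $i,j \le m$, $\,= i$ for $i \le m < j$, $\,= j$ for $j \le m < i$, and $\,= m + r_{A_2}(i-m, j-m)$ for $i,j > m$ (using that rows and columns of an ASM sum to $1$). A direct comparison of corner‑rank functions then shows $X_{w_1 \oplus w_2} \subseteq X_{A_1 \oplus A_2}$ iff $X_{w_1} \subseteq X_{A_1}$ and $X_{w_2} \subseteq X_{A_2}$; applied with $w_1, w_2$ minimal this already gives the easy inequality $\codim X_{A_1 \oplus A_2} \le \codim X_{A_1} + \codim X_{A_2}$. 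It also gives one containment in the assertion, together with a short minimality check: if $\sigma \le w_1 \oplus w_2$ in Bruhat order and $X_\sigma \subseteq X_{A_1 \oplus A_2}$, then $r_\sigma(m,m) \ge r_{w_1 \oplus w_2}(m,m) = m$, so $r_\sigma(m,m) = m$, i.e.\ $\sigma(\{1,\dots,m\}) = \{1,\dots,m\}$; hence $\sigma = \sigma_1 \oplus \sigma_2$ with $\sigma_i \le w_i$ and $X_{\sigma_i} \subseteq X_{A_i}$, forcing $\sigma_i = w_i$ by minimality of $w_i$.

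The crux is the reverse containment: every Bruhat‑minimal $\sigma$ with $X_\sigma \subseteq X_{A_1 \oplus A_2}$ satisfies $\sigma(\{1,\dots,m\}) = \{1,\dots,m\}$. Geometrically this is exactly the statement that $\{\det Z_{[m],[m]} \ne 0\}$ is dense in $X_{A_1 \oplus A_2}$: on the dense orbit of $X_\sigma$ the rank of $Z_{[m],[m]}$ is identically $r_\sigma(m,m)$, so $X_\sigma \not\subseteq \{\det Z_{[m],[m]} = 0\}$ iff $r_\sigma(m,m) = m$ iff $\sigma$ is block, and over that open locus the substitution $(B,C,D,E) \mapsto (B, B^{-1}C, DB^{-1}, E - DB^{-1}C)$ identifies $X_{A_1 \oplus A_2} \cap \{\det Z_{[m],[m]} \ne 0\}$ with $(GL_m \cap X_{A_1}) \times \mathbb{A}^{2mn} \times X_{A_2}$ — the northwest conditions say precisely $B \in X_{A_1}$, the conditions straddling the two blocks are vacuous (there the corner‑rank bound equals $i$ or $j$), and the southeast conditions become $E - DB^{-1}C \in X_{A_2}$ via $\rk\!\bigl(\begin{smallmatrix} B & C' \\ D' & E' \end{smallmatrix}\bigr) = m + \rk(E' - D'B^{-1}C')$ for invertible $B$; since $GL_m \cap X_{A_1}$ is dense in $X_{A_1}$ (each component $X_w$ contains the invertible matrix $w$), this open locus already exhibits the claimed codimension and equidimensionality, so the whole theorem hinges on there being no component disjoint from it. I expect this density statement to be the main obstacle. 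I would attack it combinatorially — showing a non‑block $\sigma$ with $r_\sigma \le r_{A_1 \oplus A_2}$ is never Bruhat‑minimal, by choosing positions $a \le m < b$ with $\sigma(a) > m \ge \sigma(b)$ and checking that the transposition $\sigma \mapsto \sigma t_{ab}$, which raises $r_\sigma(m,m)$, preserves all corner‑rank bounds: on the rectangle where $r_\sigma$ strictly increases, the bounds from the two straddling regions are automatically slack (one of the first $i$ rows, resp.\ first $j$ columns, is occupied by the chosen crossing), while the bounds in the northwest and southeast blocks survive after a judicious choice of $a,b$ using that $A_1, A_2$ are ASMs — or geometrically, by deforming an arbitrary point of $X_{A_1 \oplus A_2}$ into the open locus along an explicit curve. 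Once this density is in hand, parts (1) and (2) follow immediately from the component description above.
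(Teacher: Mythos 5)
Your overall reduction is the same as the paper's, and it is the right one: both arguments boil down to identifying $\Perm(A_1 \oplus A_2)$ with $\{w_1 \oplus w_2 : w_i \in \Perm(A_i)\}$, after which (1) and (2) follow from $\ell(w_1 \oplus w_2) = \ell(w_1) + \ell(w_2)$ and \Cref{prop:perm-set-decomposition}. The easy containment you give (each $w_1 \oplus w_2$ with $w_i \in \Perm(A_i)$ is Bruhat-minimal over $A_1 \oplus A_2$) is correct, and your block formula for $\rk_{A_1 \oplus A_2}$ is right. But you have correctly flagged the reverse containment --- every Bruhat-minimal $\sigma$ over $A_1 \oplus A_2$ is a block permutation --- as the crux, and neither of your sketches actually proves it, so there is a genuine gap. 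The deformation idea is not carried out: it is unclear how to move a point of $X_{A_1 \oplus A_2}$ into $\{\det Z_{[m],[m]} \neq 0\}$ while preserving the SE rank bounds, which involve all four blocks of $Z$ simultaneously. The transposition idea must use minimality of $\sigma$ in an essential and currently unsupplied way, because for a non-minimal $\sigma$ the transposition really can fail: if $A_1 = A_2$ is the permutation matrix of $21$, so that $A_1 \oplus A_2$ is the permutation matrix of $2143$, and $\sigma = 2431$, then the only choice is $(a,b) = (2,4)$, and $\sigma t_{2,4} = 2134$ violates $\rk_{2134}(3,3) = 3 > 2 = \rk_{2143}(3,3)$. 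This $\sigma$ is not minimal, so it is not a counterexample to your target claim, but it shows the transposition argument as sketched cannot close the gap without a further idea invoking minimality in the NW and SE blocks.

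By contrast, the paper avoids the Bruhat combinatorics for general $A_1$ by Gr\"obner degeneration. It writes $\init I_{A_1 \oplus A_2} = \init I_{A_1 \oplus \mathbb{I}_n} + \init I_{\mathbb{I}_m \oplus A_2}$ (\Cref{prop:initialIdealSplit}) and uses \Cref{lem:antidiagonal-initial-ideal-above-main-antidiagonal} to see that the two summands are supported on disjoint sets of variables, lying respectively above and below the main antidiagonal of the $(m+n) \times (m+n)$ grid. Then \Cref{lem:CMness-variables} identifies the minimal primes of the sum as sums of minimal primes of the two pieces, and \Cref{prop:components-give-reduced-words} translates those back into permutations. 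The only Bruhat-order argument the paper needs is \Cref{prop:permBij}, which is exactly your missing claim in the special case $A_1 = \mathbb{I}_m$ (iterated from $m=1$), and which already has a somewhat delicate transposition proof. Your approach, if completed, would give a direct generalization of \Cref{prop:permBij} to arbitrary $A_1$; I would recommend either carrying that out carefully --- you will have to choose $a$ and $b$ with some cunning and genuinely use minimality to rule out tight bounds in the corner blocks --- or routing through antidiagonal initial ideals as the paper does.
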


We conjecture that (2) holds if equidimensional is replaced with Cohen--Macaulay (\Cref{conj:CM}).  We prove one direction of the conjuncture and provide empirical evidence for the other (\Cref{prop:codimOfDirectSum}).

We give examples to show that the method used by Knutson and Miller \cite{KM05} to show that matrix Schubert varieties are Cohen--Macaulay, recovering a result of Fulton \cite{Ful92}, does not extend to the Cohen--Macaulay ASMs (\Cref{ex:non-KM-gvd} and \Cref{KM-vert-decomp-table}).  We also give data suggesting that the property of being Cohen--Macaulay becomes increasingly rare as the size of the ASM grows (\Cref{fig:CMData}).  These data motivate the study of ASM pattern avoidance from an algebro-geometric perspective.  We argue that the naive notion of pattern avoidance does not explain the eventual rarity of Cohen--Macaulay ASMs (\Cref{ex:CM-contains-non} and \Cref{containment-restrictions}) but that there is still some algebro-geometric control enforced by this notion (\Cref{prop:not-equidimensional}).  We hope that this will provide context for future studies of pattern avoidance of ASMs from an algebro-geometric perspective.

\section{Background}\label{sec:background}

Throughout this document, $\kappa$ will denote an arbitrary field.  For $k \leq n \in \mathbb{Z}_+$, let $[n] = \{1, 2, \ldots, n\}$ and $[k,n] = \{k, k+1, \ldots, n\}$. Let $S_n$ denote the symmetric group on $[n]$.  We identify $w \in S_n$ with the $n \times n$ permutation matrix with $1$s in positions $(i,w(i))$, $i \in [n]$, and $0$s elsewhere.

\begin{definition}\label{def: ASM}
An \newword{alternating sign matrix (ASM)} is square matrix with the following properties:
\begin{enumerate}
    \item Each entry is taken from the set $\{-1,0,1\}$.
    \item The entries in each row (resp. column) sum to 1.
    \item The nonzero entries in a row (resp. column) alternate between 1 and $-1$. 
\end{enumerate}
\end{definition}

Let $\asm(n)$ denote the set of $n \times n$ ASMs.  The ASMs whose entries all lie in $\{0,1\}$ are exactly the permutation matrices.  For $A \in \asm(n)$, note that the sum of entries along each row (resp. column) is $1$ and that the sum of all entries of $A$ is $n$.

\subsection{ASM varieties and ideals}
We now outline the process for associating a variety and an ideal to an alternating sign matrix.

Fix $A = (A_{a,b}) \in \asm(n)$.  Define a \newword{rank function} $\rk_A$ on $[n] \times [n]$ by
\[
\rk_A(i,j)=\sum_{a=1}^i\sum_{b=1}^j A_{a,b}.
\]
The \newword{rank matrix} of $A$ is the $n \times n$ matrix whose $(i,j)$ entry is $\rk_A(i,j)$. (Note that $\rk_A(i,j)$ is typically not the rank of the submatrix of $A$ consisting of the first $i$ rows and first $j$ columns, unless $A \in S_n$.)

THroughout this paper, we will let $Z = (z_{i,j})$ denote an $n \times n$ generic matrix, i.e., a matrix of distinct variables, and we will let $R$ denote the polynomial ring over $\kappa$ in the entries of $Z$.  For $i,j \in [n]$, let $Z_{[i],[j]}$ be the submatrix of $Z$ consisting of the first $i$ rows and $j$ columns.  For $k \in \mathbb{Z}_+$, let $I_k(Z_{[i],[j]})$ denote the ideal of $R$ generated by the $k \times k$ minors of $Z_{[i],[j]}$.

\begin{definition}
\label{def:ASMIdeal}
For $A \in \asm(n)$, let \[
    I_A = \sum_{(i,j) \in [n] \times [n]} I_{\rk_A(i,j)+1}(Z_{[i],[j]}).
\]  We call $I_A$ the \newword{ASM ideal} of $A$.  Let $X_A$ denote the variety of $I_A$, which we call the \newword{ASM variety} of $A$.
\end{definition}

Every ASM ideal $I_A$ is radical (\cite[Proposition 5.4]{Wei17}). See also \cite[Proposition 3.3]{Ful92} for the case $A \in S_n$.  When $A \in S_n$, $I_A$ is called a \newword{Schubert determinantal ideal} and $X_A$ a \newword{matrix Schubert variety}.  An ASM variety is irreducible if and only if it is a matrix Schubert variety \cite[Proposition 3.3]{Ful92}, \cite[Proposition 5.4]{Wei17}.  The codimension of $X_w$ is equal to the \newword{Coxeter length} of $w$, denoted $\ell(w)$ (\cite[Proposition 3.3]{Ful92}), also known as the inversion number of $w$.  See \cite{Ful92} for further information on matrix Schubert varieties, including their close connection to Schubert varieties in the complete flag variety.   

We call the generators described in Definition \ref{def:ASMIdeal} the \newword{natural generators} of $I_A$.  It is sometimes more convenient to work with a smaller set of generators called the Fulton generators, which we describe below, after giving some required auxiliary constructions.  Fulton generators of ASM ideals were introduced by Weigandt \cite{Wei17}, generalizing the description given by Fulton \cite{Ful92} for Schubert determinantal ideals.

For $A \in \asm(n)$, let \[
D(A) = \left\{(i,j) \mid \sum_{k = 1}^j A_{k,j} = 0 = \sum_{\ell = 1}^i A_{i,\ell} \right\}.
\] We call $D(A)$ the \newword{Rothe diagram} of $A$.  (Some authors refer to $D(A)$ as the inversion set of $A$.)  Note that the definition of ASM forces $\sum_{k = 1}^j A_{k,j}, \sum_{\ell = 1}^i A_{i,\ell} \in \{0,1\}$ for all $i,j \in [n]$.

\begin{example}
\label{ex:rotheDiagram}
We give an example of a method for visualizing the Rothe diagram of an ASM. If $A=\begin{pmatrix} 
    0 & 0 & 1 & 0\\
    1 & 0  & -1 & 1 \\
    0 & 1 & 0 & 0\\
    0 & 0 & 1 & 0
    \end{pmatrix}$, we may visualize $D(A)$ by drawing
    \[
   \begin{tikzpicture}[x=1.5em,y=1.5em]
\draw[step=1,gray, thin] (0,0) grid (4,4);
\draw[color=black, thick](0,0)rectangle(4,4);
\filldraw [black](0.5,2.5)circle(.1);
\filldraw [black](1.5,1.5)circle(.1);
\filldraw [black](2.5,0.5)circle(.1);
\draw [black](2.5,2.5)circle(.1);
\filldraw [black](2.5,3.5)circle(.1);
\filldraw [black](3.5,2.5)circle(.1);

\draw[thick, color=blue] (.5,0)--(.5,2.5)--(2,2.5);
\draw[thick, color=blue] (2.5,3)--(2.5,3.5)--(4,3.5);
\draw[thick, color=blue] (1.5,0)--(1.5,1.5)--(4,1.5);
\draw[thick, color=blue] (2.5,0)--(2.5,0.5)--(4,0.5);
\draw[thick, color=blue] (3.5,0)--(3.5,2.5)--(4,2.5);
\end{tikzpicture}.
\] 
In an $n \times n$ grid, we have placed a solid dot for every $1$ in $A$ and an open dot for every $-1$.  Lines emanate down and to the right out of each solid dot, stopping before entering the cell of an open dot.  Then $D(A) = \{(1,1), (1,2), (2,3)\}$ consists of the indices of cells without a line intersecting their interior.
\end{example}

Let $\ess(A) = \{(i,j) \in D(A) \mid (i,j+1), (i+1,j) \notin D(A)\}$.  We call $\ess(A)$ the \newword{essential set} of $A$ and elements of $\ess(A)$ the \newword{essential cells} of $A$.  

Returning to Example \ref{ex:rotheDiagram}, we have $\ess(A) = \{(1,2),(2,3)\}$.  We may visualize $\ess(A)$ as the maximally southeast elements of connected components of $D(A)$ (where two cells are considered adjacent if they share an edge).

It is a theorem due to Fulton \cite[Lemma 3.10]{Ful92} for $A \in S_n$ and generalized for all $A \in \asm(n)$ by Weigandt \cite[Lemma 5.9]{Wei17} that the ASM ideal $I_A$ may be generated solely by considering rank conditions at essential cells: \[
I_A = \sum_{(i,j) \in \ess(A)} I_{\rk_A(i,j)+1}(Z_{[i],[j]}).
\]  We call this set of generators of $I_A$ the \newword{Fulton generators}.  

Continuing with Example \ref{ex:rotheDiagram}, the rank matrix of $A$ is  $\rk_A=\begin{pmatrix} 
    0 & \boxed{0} & 1 & 1\\
    1 & 1  & \boxed{1} & 2 \\
    1 & 2 & 2 & 3\\
    1 & 2 & 3 & 4
    \end{pmatrix}$, where the ranks in essential cells appear in boxes.  The Fulton generators of $I_A$ are \[
\left\{z_{1,1} , z_{1,2}, \begin{vmatrix} z_{1,1} & z_{1,2} \\
z_{2,1} & z_{2,2} \end{vmatrix}, \begin{vmatrix} z_{1,1} & z_{1,3} \\
z_{2,1} & z_{2,3} \end{vmatrix}, \begin{vmatrix} z_{1,2} & z_{1,3} \\
z_{2,2} & z_{2,3} \end{vmatrix}\right\}.
\] The degree $1$ generators are determined by the essential cell $(1,2)$, and the degree $2$ are determined by the essential cell $(2,3)$.

    For $A \in \ASM(n)$, let $\Dom(A) = \{(i,j) \in [n] \times [n] \mid \rk_A(i,j) = 0\}$, and call $\Dom(A)$ the \newword{dominant} part of $A$.  Note that $\Dom(A) \subseteq D(A)$ and that $\Dom(A)$ consists exactly of the indices of the degree $1$ Fulton generators of $I_A$, i.e., the generators which are single variables.  In Example \ref{ex:rotheDiagram}, $\Dom(A) = \{(1,1),(1,2)\}$.

\section{Stability under embeddings, and limitations}
It is of central import in Schubert calculus that core constructions (e.g., the (double) Schubert polynomial, the defining equations of matrix Schubert variety) depending on the permutation $w \in S_n$ do not change if we instead view $w$ as an element of $S_{n+1}$ under the embedding $S_n \hookrightarrow S_{n+1}$ where each function is extended by a fixed point at $n+1$.  Constructions and invariants vary, though predictably, if instead we consider the embedding $S_n \hookrightarrow S_{n+1}$ where $w \in S_n$ is mapped to the function $w' \in S_{n+1}$ where $w'(1) = 1$ and $w'(i) = w(i-1)$ for $2 \leq i \leq n+1$.   For example, the assignment $(i,j) \mapsto (i+1,j+1)$ gives a bijection between $D(w)$ and $D(w')$; $\rk_w(i,j) = \rk_{w'}(i+1,j+1)-1$ for all $(i,j) \in D(w)$; $\codim(X_w) = \codim(X_{w'})$; and the Castelnuovo–Mumford regularity (an important algebraic invariant) of one may be inferred easily from the other via \cite{PSW24}.  This embedding plays an important role in the rich theory of Stanley symmetric functions, introduced by Stanley \cite{Sta84}, which Macdonald \cite{Mac91} showed to be the stable limit of Schubert polynomials.   

For $A \in \asm(n)$, let $1 \oplus A$ denote the direct sum of the $1 \times 1$ identity matrix with $A$ and $A \oplus 1$ the direct sum of $A$ with the $1 \times 1$ identity matrix, both of which are easily seen to be elements of $\asm(n+1)$.  As with permutations, we note that $I_A$ and $I_{A \oplus 1}$ have the same set of Fulton generators (in different polynomial rings).  Thus, $X_A$ and $X_{A \oplus 1}$ differ only by products with affine factors.  This is the extension of the first embedding of $S_n \hookrightarrow S_{n+1}$ discussed above.  

Motivated by the similarities between $I_w$ and $I_{1 \oplus w}$, $w \in S_n$, we discuss the relationship between the ASM ideals $I_A$ and $I_{1 \oplus A}$.  That is, we consider the natural extension of the second embedding of $S_n \hookrightarrow S_{n+1}$ discussed above.  Specifically, we give a codimension-preserving bijection between the components of $X_A$ and the components of $X_{1 \oplus A}$ (Proposition \ref{prop:permBij}), which can be viewed as an extension of the elementary fact that $\ell(w) = \ell(1 \oplus w)$ for $w \in S_n$.  

By contrast, we present data showing that a vertex decomposition of a simplicial complex naturally associated to $A$ does not necessarily give rise to a vertex decomposition of the corresponding simplicial complex of $1\oplus A$ (Figure \ref{KM-vert-decomp-table}).  This failure is interesting because vertex decompositions of these simplicial complexes have algebro-geometric interpretations via \cite{KR21}.  Specifically, had that implication held, it would have allowed one to lift certain proofs that certain $X_A$ belong to the Gorenstein linkage class of a complete intersection to proofs that the corresponding $X_{1 \oplus A}$ do.  For the relevant Gorenstein linkage background, we direct the reader to \cite{NR08, KR21}.

This contrast motivates our interest in the relationship between Cohen--Macaulayness of $A$ and $1 \oplus A$.   Every variety in the Gorenstein linkage class of a complete intersection is Cohen--Macaulay, and every Cohen--Macaulay variety is equidimensional.  In this sense, the Cohen--Macaulay property is a ``middle ground" between the two previously-discussed conditions.  The equidimensionality result (Proposition \ref{prop:permBij}) together with computer data lead us to conjecture that $A$ is Cohen--Macaulay if and only if $1 \oplus A$ is (Conjecture \ref{conj:CM}).  

\subsection{Equidimensionality}
If $A,B$ are $n \times n$ ASMs, define $A \ge B$ if $\rk_A(i,j) \le \rk_B(i,j)$ for all $1 \le i,j \le n$.
Restricted to permutation matrices, this is the {\it (strong) Bruhat order} on $S_n$.
Let
\[
    \Perm(A) = \{w \in S_n : \text{$w \ge A$ and if $w \ge v \ge A$ for some $v \in S_n$, then $v=w$}\}.
\]

\begin{prop}\cite[Proposition 5.4]{Wei17}\label{prop:perm-set-decomposition} $I_A$ has the irredundant prime decomposition
\[
    I_A = \bigcap_{w \in \Perm(A)} I_w.
\]
The codimension of $X_A$ is $\min \{\ell(w) \mid w \in \Perm(A)\}$.
\end{prop}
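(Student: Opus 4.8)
The plan is to derive the prime decomposition from a set-theoretic decomposition $X_A=\bigcup_{w\in\Perm(A)}X_w$, using that $I_A$ is radical \cite{Wei17} and that each Schubert determinantal ideal $I_w$ is prime \cite{Ful92}. First observe that $I_{\rk_A(i,j)+1}(Z_{[i],[j]})$ vanishes at a matrix $M$ exactly when $\rk(M_{[i],[j]})\le\rk_A(i,j)$, so $X_A=\{M\in\Mat_n:\rk(M_{[i],[j]})\le\rk_A(i,j)\text{ for all }i,j\}$. This set is closed, and since $\rk\bigl((LMU)_{[i],[j]}\bigr)=\rk\bigl(L_{[i],[i]}M_{[i],[j]}U_{[j],[j]}\bigr)=\rk(M_{[i],[j]})$ whenever $L$ is invertible lower triangular and $U$ is invertible upper triangular, $X_A$ is stable under the action $(L,U)\cdot M=LMU$ of lower-triangular matrices on the left and upper-triangular matrices on the right. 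Granting the set-theoretic decomposition and its irredundancy, passing to vanishing ideals (and using radicality of $I_A$) gives $I_A=\bigcap_{w\in\Perm(A)}I_w$, which is the irredundant prime decomposition because each $I_w$ is prime; and $\codim(X_A)=\min_{w\in\Perm(A)}\codim(X_w)=\min_{w\in\Perm(A)}\ell(w)$ by Fulton's codimension formula \cite{Ful92}.

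For the easy inclusion: if $w\ge A$ then $\rk_w(i,j)+1\le\rk_A(i,j)+1$, and since every $(k+1)$-minor of a matrix lies in the ideal generated by its $k$-minors, $I_{\rk_A(i,j)+1}(Z_{[i],[j]})\subseteq I_{\rk_w(i,j)+1}(Z_{[i],[j]})$; summing over $(i,j)$ gives $I_A\subseteq I_w$, i.e.\ $X_w\subseteq X_A$. Every permutation $w\ge A$ lies above some Bruhat-minimal permutation above $A$, hence above some element of $\Perm(A)$, and a Bruhat-larger permutation has a smaller matrix Schubert variety; therefore $\bigcup_{w\ge A}X_w=\bigcup_{w\in\Perm(A)}X_w\subseteq X_A$. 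For irredundancy, if $X_w\subseteq X_v$ then the permutation matrix $w$, being a point of $X_w$, lies in $X_v$, so $\rk_w(i,j)\le\rk_v(i,j)$ for all $i,j$, i.e.\ $w\ge v$; together with the minimality defining $\Perm(A)$ this rules out containments among the $X_w$ for $w\in\Perm(A)$, and as each $X_w$ is irreducible these are exactly the irreducible components of $X_A$.

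The reverse inclusion $X_A\subseteq\bigcup_{w\in\Perm(A)}X_w$ is the crux, and the one step that genuinely exceeds Fulton's permutation case (for $A\in S_n$ the decomposition is trivial). Given $M\in X_A$, the Bruhat decomposition of the matrix monoid under the action above provides a partial permutation matrix $p$ in the orbit of $M$, and then $\rk_p(i,j)=\rk(M_{[i],[j]})\le\rk_A(i,j)$ for all $i,j$. The key lemma I would prove is: a partial permutation matrix $p$ with $\rk_p\le\rk_A$ pointwise can be completed to a permutation matrix $w$ with $\rk_w\le\rk_A$ pointwise. Granting it, completing $p$ only inserts $1$'s and hence only increases the rank function, so $\rk_w(i,j)\ge\rk_p(i,j)=\rk(M_{[i],[j]})$ for all $i,j$; thus $M\in X_w$ with $w\ge A$, whence $X_A\subseteq\bigcup_{w\ge A}X_w=\bigcup_{w\in\Perm(A)}X_w$, completing the decomposition. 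I expect this completion lemma to be the main obstacle. I would attack it by induction on the number of ones missing from $p$: when $p$ is not yet a permutation one inserts a $1$ at a pair $(r,c)$ made of an empty row $r$ and an empty column $c$ --- an operation that changes the rank function only by adding $1$ on the southeast rectangle with northwest corner $(r,c)$ --- and the real work is to verify that the inequalities forced on the values $\rk_A(i,j)$ by $A$ being an ASM always leave such a choice of $(r,c)$ available (equivalently, that $A$ is the meet of $\Perm(A)$ in the ASM lattice, cut down to partial permutations).
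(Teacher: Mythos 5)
The paper does not actually prove this proposition; it is imported wholesale from Weigandt \cite[Proposition 5.4]{Wei17}, so there is no in-paper proof to compare against. Your overall architecture is nevertheless the right one and tracks Weigandt's: realize $X_A$ via rank conditions and observe its $B_-\times B_+$ stability, verify $X_w\subseteq X_A$ for $w\ge A$, obtain irredundancy by evaluating at the permutation matrix $w\in X_w$, derive the codimension statement from Fulton's $\codim X_w=\ell(w)$, and reduce the hard inclusion $X_A\subseteq\bigcup_{w\in\Perm(A)}X_w$ via the Bruhat--Renner decomposition of the matrix monoid to a combinatorial statement about partial permutation matrices. All of these steps are correct. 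The gap is in the combinatorial statement itself.

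Your key lemma --- that any partial permutation matrix $p$ with $\rk_p\le\rk_A$ pointwise can be \emph{completed} (by inserting $1$s into empty rows and columns of $p$) to a permutation $w$ with $\rk_w\le\rk_A$ --- is false. Take
$A=\begin{pmatrix}0&1&0\\1&-1&1\\0&1&0\end{pmatrix}$, so $\rk_A=\begin{pmatrix}0&1&1\\1&1&2\\1&2&3\end{pmatrix}$, and take
$p=\begin{pmatrix}0&1&0\\0&0&0\\0&0&1\end{pmatrix}$. Then $\rk_p=\begin{pmatrix}0&1&1\\0&1&1\\0&1&2\end{pmatrix}\le\rk_A$ pointwise, so $p$ is a point of $X_A$ and is its own Bruhat--Renner representative. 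The only empty row of $p$ is $2$ and the only empty column is $1$, so the only completion is $w=213$; but $\rk_{213}(2,2)=2>1=\rk_A(2,2)$, so the completion violates the bound. The conclusion of the proposition still holds here: $\Perm(A)=\{231,312\}$ and $\rk_p\le\rk_{231}\le\rk_A$ pointwise, so $p\in X_{231}$, but $231$ does \emph{not} contain the $1$ of $p$ in position $(3,3)$. The lemma you actually need is the weaker statement that for every partial permutation $p$ with $\rk_p\le\rk_A$ there exists $w\in S_n$ with $\rk_p\le\rk_w\le\rk_A$ pointwise, with no requirement that $w$ extend $p$ entrywise; this does suffice for your argument (it still gives $\rk(M_{[i],[j]})=\rk_p(i,j)\le\rk_w(i,j)$, hence $M\in X_w$ with $w\ge A$). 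But proving it requires allowing the $1$s of $p$ to move, so your proposed induction by greedy insertion of $1$s cannot work. Establishing this weaker lemma is exactly the nontrivial content of Weigandt's argument, which leans on the fact that the ASM poset is the Dedekind--MacNeille completion of Bruhat order on $S_n$.
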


\begin{definition}\label{def: equidim}
Call $A$ \newword{equidimensional} if all elements of $\Perm(A)$ have the same Coxeter length, or, equivalently, if $X_A$ is equidimensional.
\end{definition}

\begin{prop}\label{prop:permBij}
Let $A \in \ASM(n)$ and $\Perm(A) =\{w_1,\ldots,w_k\}$.  For $w \in S_n$, the assignment $w \mapsto 1 \oplus w$ gives a bijection between $\Perm(A)$ and $\Perm(1 \oplus A)$.  In particular, $
\codim(A)=\codim(1 \oplus A)$, and $A$ is equidimensional if and only if $1 \oplus A$ is equidimensional.
\end{prop}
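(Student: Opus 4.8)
The plan is to establish the bijection $\Perm(A) \to \Perm(1\oplus A)$ by the assignment $w \mapsto 1 \oplus w$, and then read off the codimension and equidimensionality statements as immediate consequences of Proposition~\ref{prop:perm-set-decomposition}, since $\ell(1\oplus w) = \ell(w)$ is elementary (prepending a fixed point at position $1$ creates no new inversions). The heart of the matter is therefore purely combinatorial: understanding how the Bruhat-order relation $w \ge A$ and the minimality condition defining $\Perm(A)$ transform under the operation $A \mapsto 1\oplus A$ on rank matrices.

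**First I would** record the effect of $\oplus$ on rank functions. If $A \in \ASM(n)$ and $B = 1\oplus A \in \ASM(n+1)$, then $\rk_B(1,j) = 1$ for all $j \ge 1$, $\rk_B(i,1) = 1$ for all $i \ge 1$, and $\rk_B(i+1,j+1) = \rk_A(i,j) + 1$ for all $i,j \in [n]$; in particular $\rk_B(i,j) \ge 1$ whenever $i,j \ge 1$. The same formulas hold with $A$ replaced by any $w \in S_n$ and $B$ by $1 \oplus w$. From this, I would check the key equivalence: for $w \in S_n$ and $v \in S_n$, we have $1\oplus w \ge 1 \oplus v$ in $S_{n+1}$ if and only if $w \ge v$ in $S_n$ (both reduce, after the shift, to comparing $\rk$ on $[n]\times[n]$, the extra first row and column contributing the constant $1$ on both sides). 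Likewise $1 \oplus w \ge 1 \oplus A$ iff $w \ge A$. This shows $w \mapsto 1\oplus w$ is an order embedding $S_n \hookrightarrow S_{n+1}$ compatible with the partial order on $\ASM$s.

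**Next I would** argue that the image of this embedding is exactly the right ``slice'' to make minimality transfer. The crucial point is that if $u \in S_{n+1}$ satisfies $u \ge 1\oplus A$, then $u$ is forced to lie in the image of the embedding, i.e.\ $u = 1 \oplus w$ for some $w \in S_n$: indeed $u \ge 1\oplus A$ forces $\rk_u(1,j) \le \rk_{1\oplus A}(1,j) = 1$ and $\rk_u(i,1)\le 1$, but a permutation matrix also has $\rk_u(i,j)$ nondecreasing and $\rk_u(n+1,n+1) = n+1$; tracing through, $u$ must have its unique $1$ in row $1$ at column $1$, hence $u = 1\oplus w$. (This is the ASM analogue of the fact that any $v \ge 1\oplus A$ in Bruhat order fixes $1$.) Given this, a chain $1\oplus w \ge u \ge 1\oplus A$ in $S_{n+1}$ is automatically a chain $1\oplus w \ge 1\oplus v \ge 1\oplus A$ with $v \in S_n$, which by the order embedding corresponds to $w \ge v \ge A$ in $S_n$. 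Therefore $w$ is minimal over $A$ iff $1\oplus w$ is minimal over $1\oplus A$, giving the claimed bijection $\Perm(A) \leftrightarrow \Perm(1\oplus A)$.

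**The main obstacle** I anticipate is the claim in the previous paragraph that every $u \in S_{n+1}$ with $u \ge 1 \oplus A$ necessarily has the form $1 \oplus w$; this is the one spot where the ASM (rather than permutation) nature of $A$ matters and where one must argue carefully with the rank inequalities rather than invoking a known fact about Bruhat order on $S_n$. Once that is in hand, the rest is bookkeeping: combining the bijection with $\ell(1\oplus w) = \ell(w)$ and Proposition~\ref{prop:perm-set-decomposition} yields $\codim(X_A) = \min\{\ell(w): w\in\Perm(A)\} = \min\{\ell(1\oplus w): 1\oplus w \in \Perm(1\oplus A)\} = \codim(X_{1\oplus A})$, and the equidimensionality equivalence follows since the multiset of lengths $\{\ell(w): w \in \Perm(A)\}$ equals $\{\ell(w'): w' \in \Perm(1\oplus A)\}$, so one is constant iff the other is.
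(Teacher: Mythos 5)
Your overall architecture matches the paper's, and the forward direction (that $1\oplus w\in\Perm(1\oplus A)$ for $w\in\Perm(A)$, via the order-embedding property and the observation that any $v$ sandwiched between $1\oplus w$ and $1\oplus A$ must fix $1$) is sound. But the step you yourself flag as the main obstacle is handled incorrectly: it is \emph{false} that every $u\in S_{n+1}$ with $u\ge 1\oplus A$ has the form $1\oplus w$. The inequalities $\rk_u(1,j)\le\rk_{1\oplus A}(1,j)=1$ and $\rk_u(i,1)\le 1$ hold automatically for \emph{every} permutation matrix (each row and each column contains a single $1$, so these partial sums are always $0$ or $1$), so they carry no information and do not force $u(1)=1$. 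Concretely, take $A=\begin{pmatrix}0&1&0\\1&-1&1\\0&1&0\end{pmatrix}$, so that $\Perm(1\oplus A)=\{1423,\,1342\}$; then $u=2341$ satisfies $u\ge 1342\ge 1\oplus A$ but $u(1)=2$. The parenthetical claim that any $v\ge 1\oplus A$ in Bruhat order fixes $1$ fails for the same reason. What is true---and all the proposition needs---is that the \emph{minimal} permutations above $1\oplus A$ fix $1$, and proving that requires an argument that genuinely uses minimality. (Note that the sandwich argument from your forward direction used the lower bound $\rk_{1\oplus w}(1,j)\le\rk_v(1,j)$ coming from $v\le 1\oplus w$; for an arbitrary $u\in\Perm(1\oplus A)$ there is no such upper element available.)

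The paper closes this gap with an exchange argument: given $u\in\Perm(1\oplus A)$ with $u(1)=j>1$, let $i$ be the least index with $u(i)<j$ and set $\nu=ut_{1,i}$, i.e., swap rows $1$ and $i$ of $u$. One checks that $\rk_\nu$ and $\rk_u$ agree except on the rectangle $[i-1]\times[u(i),j-1]$, where $\rk_u=0$ by the minimality of $i$ and hence $\rk_\nu=1\le\rk_{1\oplus A}$; therefore $1\oplus A\le\nu<u$, contradicting the minimality of $u$. You would need to supply this (or an equivalent) argument. The remaining bookkeeping in your write-up---the order embedding, $\ell(1\oplus w)=\ell(w)$, and the appeal to Proposition~\ref{prop:perm-set-decomposition}---is correct and agrees with the paper.
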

\begin{proof}
If $A \in S_n$, then the claim is trivial, and so we assume $A \notin S_n$.

Suppose $w \in \Perm(A)$.  We will first show that $1 \oplus w \in \Perm(1 \oplus A)$.  From the definition of $1 \oplus A$, we compute \[\rk_{1 \oplus A}(i,j) = 
\begin{cases}
1 & i=1 \mbox{ or } j=1\\
\rk_A(i-1,j-1)+1 & \mbox{otherwise}.
\end{cases}
\]

Thus
\begin{equation}\label{eq:perm_1}
    \text{$w > A$ if and only if $1 \oplus w > {1 \oplus A}$.}
\end{equation}
Suppose there exists a $v$ such that $1 \oplus w \ge v > 1 \oplus A$.  From the equations
\[
    \rk_{1 \oplus w}(1,j) \le \rk_{v}(1,j) \le \rk_{1 \oplus A}(1,j)
\] and \[
 \rk_{1 \oplus w}(1,j) = 1 = \rk_{1 \oplus A}(1,j)
\]
for all $j \in [n+1]$, we see that the first row of $v$ is the same as the first row of $1 \oplus A$, which is the same as the first row of $1 \oplus w$ - specifically, the $1^{st}$ unit row vector.  Similarly, the first column of all three matrices $1 \oplus w$, $v$, and $1 \oplus A$ must be the $1^{st}$ unit column vector.  

Thus $v = 1 \oplus v'$ for some $v' \in S_n$.  By (\ref{eq:perm_1}), $w \geq v'>A$.  By definition of $\Perm(A)$, the inequality $w \geq v' > A$ implies $w=v'$, and so $1 \oplus w = 1 \oplus v'=v$.  Hence, $1 \oplus w \in \Perm(1 \oplus A)$.

In the other direction, fix some $u \in \Perm(1 \oplus A)$.  We claim that $u = 1 \oplus u'$ for some $u' \in S_n$.  It suffices to show that $u(1) = 1$.

Suppose for contradiction that $u(1) = j$ for some $j>1$ and that $i$ is the least index so that $u(i)<j$.  Let $\nu = ut_{1,i}$, where $t_{1,i}$ is the transposition $(1i)$.  That is, in matrix form, $\nu$ is the matrix obtained from $u$ by exchanging rows $1$ and $i$. 

We claim that $1 \oplus A \leq \nu <u$, which will contradict the assumption $u \in \Perm(1 \oplus A)$.  Note that $\rk_\nu(\alpha,\beta) = \rk_u(\alpha,\beta)$ unless $\alpha \in [i-1]$ and $\beta \in [u(i), j-1]$, in which case $\rk_u(\alpha,\beta) = \rk_\nu(\alpha,\beta)-1$ (see \Cref{ex:permBij} for a visualization). Thus $\nu<u$.  Because $i$ was chosen minimally, $\rk_u(\alpha,\beta) = 0$ for all $\alpha \in [i-1]$ and $\beta \in [u(i), j-1]$, and so $\rk_\nu(\alpha,\beta) = 1$ for all such $(\alpha, \beta)$.  But $\rk_{1 \oplus A}(\alpha, \beta) \ge 1$ for all $\alpha,\beta$.  It follows from $\rk_\nu(\alpha,\beta) = \rk_u(\alpha,\beta) \leq \rk_{1 \oplus A}(\alpha, \beta)$ unless $\alpha \in [i-1]$ and $\beta \in [u(i), j-1]$ together with $\rk_\nu(\alpha,\beta) = 1 \leq \rk_{1 \oplus A}(\alpha,\beta)$ for $\alpha \in [i-1]$ and $\beta \in [u(i), j-1]$ that $1 \oplus A \leq \nu$, completing the claim.

Hence our arbitrary $u \in \Perm(1 \oplus A)$ has the form $u = 1 \oplus u'$ for some $u' \in S_n$. 
 It remains to show that $u' \in \Perm(A)$. 
 By (\ref{eq:perm_1}), $u' >A$.  Hence, if $u' \notin \Perm(A)$, then there exists some $\tilde{u} \in S_n$ satisfying $A < \tilde{u}<u'$.  But then, by (\ref{eq:perm_1}), $1 \oplus A < 1 \oplus \tilde{u}<u$, contradicting the assumption $u \in \Perm(1 \oplus A)$.
\end{proof}

\begin{example}
    In order to help internalize the argument of \Cref{prop:permBij}, we will give a visualization of the region in which $\rk_u$ and $\rk_\nu$ differ for some $\nu = ut_{1,i}$.  Consider $u = 45213$, in which case $j =4$, $i = 3$, $u(i) = 2$, and $\nu = 25413$.  The region $[3-1] \times [2,4-1]$, in which $\rk_u$ and $\rk_\nu$ differ, is shaded in yellow.  

        \[
   \begin{tikzpicture}[x=1.5em,y=1.5em]
  \draw[color=black, thick](0,0)rectangle(5,5);
   \fill [yellow] (1,3) rectangle (3,5);
\draw[step=1,gray, thin] (0,0) grid (5,5);
\node at (-1,2.5){$u:$};

\filldraw [black](0.5,1.5)circle(.1);
\filldraw [black](1.5,2.5)circle(.1);
\filldraw [black](2.5,0.5)circle(.1);
\filldraw [black](3.5,4.5)circle(.1);
\filldraw [black](4.5,3.5)circle(.1);

\draw[thick, color=blue] (.5,0)--(.5,1.5)--(5,1.5);
\draw[thick, color=blue] (2.5,0)--(2.5,.5)--(5,.5);
\draw[thick, color=blue] (1.5,0)--(1.5,2.5)--(5,2.5);
\draw[thick, color=blue] (3.5,0)--(3.5,4.5)--(5,4.5);
\draw[thick, color=blue] (4.5,0)--(4.5,3.5)--(5,3.5);
\end{tikzpicture} \hspace{2cm}       \begin{tikzpicture}[x=1.5em,y=1.5em]
  \draw[color=black, thick](0,0)rectangle(5,5);
    \fill [yellow] (1,3) rectangle (3,5);
\draw[step=1,gray, thin] (0,0) grid (5,5);
\node at (-1,2.5){$\nu:$};

\filldraw [black](0.5,1.5)circle(.1);
\filldraw [black](1.5,4.5)circle(.1);
\filldraw [black](2.5,0.5)circle(.1);
\filldraw [black](3.5,2.5)circle(.1);
\filldraw [black](4.5,3.5)circle(.1);

\draw[thick, color=blue] (.5,0)--(.5,1.5)--(5,1.5);
\draw[thick, color=blue] (2.5,0)--(2.5,.5)--(5,.5);
\draw[thick, color=blue] (1.5,0)--(1.5,4.5)--(5,4.5);
\draw[thick, color=blue] (3.5,0)--(3.5,2.5)--(5,2.5);
\draw[thick, color=blue] (4.5,0)--(4.5,3.5)--(5,3.5);
\end{tikzpicture}   \qedhere
\] 
\end{example}

\begin{example}\label{ex:permBij}
 \Cref{prop:permBij} involves inserting into an ASM a row and column whose entries are $0$ except where they intersect, where the value is $1$. In the case of \Cref{prop:permBij}, it is specifically the first row and first column.  It is worth noting that equidimensionality is \emph{not} preserved by an arbitrary insertion of this type.  For example, consider $A = \begin{pmatrix}
0 & 1 & 0 \\
1 & -1 & 1 \\
0 & 1 &  0
\end{pmatrix}$ and $B = \begin{pmatrix}
0 & 1 & 0 & 0 \\
0 & 0 & 1 & 0 \\
1 & -1 & 0 & 1 \\
0 & 1 & 0 & 0
\end{pmatrix}$, in which case $B$ may be obtained from $A$ by inserting $(0,1,0,0)^T$ to become column $3$ and $(0,0,1,0)$ to become row $2$.

Then $I_A = (z_{1,1}, z_{1,2}z_{2,1}) = I_{312} \cap I_{231}$ note only defines an equidimensional ASM variety but even complete intersection while $I_B = (z_{1,1}, z_{2,1}, z_{1,2}z_{3,1}, z_{2,2}z_{3,1}) = I_{3412} \cap I_{2341}$ has one component of codimension $4$ and one of codimension $3$.  
\end{example}

\subsection{Vertex decomposition}

We now review some basic definitions from simplicial complex theory.  A \newword{simplicial complex} $\Delta$ on $[n]$ is a set of subsets of $[n]$ such that, if $\sigma \in \Delta$ and $\tau \subseteq \sigma$, then $\tau \in \Delta$.  We call the elements of $\Delta$ \newword{faces} and the maximal faces (by inclusion) \newword{facets}.  The \newword{dimension} of a face $\sigma \in \Delta$ is $|\sigma|-1$, and the dimension of $\Delta$ is the maximum among dimensions of its faces.  Faces of dimension $0$ are called $\newword{vertices}$.  We call $\Delta$ \newword{pure} if all of its facets have the same dimension.

Now we discuss how these ideas from simplicial complex theory relate to the objects we want to study. The \newword{Stanley--Reisner} correspondence is a bijection between simplicial complexes on $[n]$ and squarefree monomial ideals. For a subset $\sigma$ of $[n]$, let $\mathbf{x}_\sigma = \prod_{i \in \sigma} x_i$. The Stanley--Reisner ideal of the simplicial complex $\Delta$ is 
\[
    I_{\Delta} = (\mathbf{x}_{\sigma} \mid \sigma \notin \Delta).
\]

The complements of the facets of $\Delta$ correspond to the associated primes of $I_\Delta$.  Hence, $\Delta$ is pure if and only if the variety defined by $I_\Delta$ is equidimensional.  We call $\Delta$ \newword{Cohen--Macaulay} whenever $I_\Delta$ defines a Cohen--Macaulay quotient ring.

Given a simplicial complex $\Delta$, we define the \newword{link} of $\Delta$ at a face $\sigma$ of $\Delta$ by \[
\lk_{\sigma}(\Delta) = \{\tau \in \Delta \mid \tau \cap \sigma = \emptyset, \tau \cup \sigma \in \Delta\}
\] and the \newword{deletion} of $\Delta$ at a face $\sigma$ of $\Delta$ by \[
\del_\sigma(\Delta) = \{\tau \in \Delta \mid \tau \cap \sigma = \emptyset\}.
\]  Notice that $\lk_\sigma(\Delta)$ is a subcomplex of $\del_{\sigma}(\Delta)$.  When $\sigma = \{v\}$ is a vertex of $\Delta$, we will often write $\lk_{v}(\Delta)$ and $\del_{v}(\Delta)$ for $\lk_{\sigma}(\Delta)$ and $\del_{\sigma}(\Delta)$, respectively.

A simplicial complex $\Delta$ is \newword{vertex decomposable} if $\Delta$ is pure and if either
    \begin{enumerate}
        \item $\Delta = \{\emptyset\}$, or if
        \item for some vertex $v$ in $\Delta$, both $\del_{v}(\Delta)$ and $\lk_{v}(\Delta)$ are vertex decomposable.
    \end{enumerate}

Vertex decomposable simplicial complexes were introduced by Provan and Billera \cite{PB80}, who showed that vertex decomposable simplicial complexes are shellable, known more classically to be a virtue of a simplicial complex.  Reisner \cite{Rei76} gave a topological criterion characterizing Cohen--Macaulayness of $\Delta$ from which it follows that every shellable simplicial complex is Cohen--Macaulay, a desirable algebraic property we discuss further in \Cref{sec:CM}.

We will use Stanley--Reisner theory to study initial ideals of ASM ideals. We refer the reader to \cite[Chapter 1]{miller-sturmfels} for more information on Stanley--Reisner theory and to \cite[Chapter 15]{Eis95} for general background on term orders and Gr\"obner bases.

Recall that $Z$ is a fixed $n \times n$ generic matrix.  We call a term order \newword{antidiagonal} if the lead term of the determinant of any submatrix $Y$ of $Z$ is the product of the entries along the antidiagonal of $Y$.  For each $A \in \asm(n)$, the Fulton generators of $I_A$ form a Gr\"obner basis with respect to any antidiagonal term order (\cite{KM05, Knu, Wei17}).  (For $w \in S_n$, Gao and Yong \cite{GY24} recently determined a minimal generating set of $I_w$ which is already a Gr\"obner basis under any antidiagonal term order.)  In particular, for any $A \in \asm(n)$, there is only one initial ideal arising from an antidiagonal term order.  We call that initial ideal the {\newword{antidiagonal initial ideal} of $I_A$, which we denote $\init I_A$.

Knutson and Miller \cite{KM05} showed that, for $w \in S_n$, $\init I_w$ is not only Cohen--Macaulay but, moreover, the Stanley--Reisner ideal of a vertex decomposable simplicial complex.  In fact, for a fixed $n$, there is an ordering of the vertices (labeling the vertices by their associated variable)
\[
    z_{1,n} > z_{1,n-1} > \cdots > z_{1,1} > z_{2,n} > z_{2,n-1} > \cdots > z_{n,1}.
\]
by which the Stanley--Reisner complex of each $\init I_w$ is vertex decomposable, independent of $w$.  We may hope that that same term order would show that every Stanley--Reisner ideal of every Cohen--Macaulay $\init I_A$ is vertex decomposable. It turns out that that is false.

\begin{example}\label{ex:non-KM-gvd}
For example, let
\[
A = \begin{pmatrix} 0&1&0&0\\ 0&0&0&1\\ 1&-1&1&0\\ 0&1&0&0 \end{pmatrix}, \;\; \init I_A = (z_{11},z_{21},z_{12}z_{31},z_{31}z_{22},z_{22}z_{13}).
\]

The ideal of the deletion at that vertex corresponding to $z_{13}$ is
\[
 (z_{11},z_{21},z_{12}z_{31},z_{31}z_{22}) = (z_{11}, z_{21}, z_{12}, z_{22}) \cap (z_{11}, z_{21}, z_{31}),
\]
which has associated primes of different heights, which shows that the deletion of the Stanley--Reisner complex of $\init I_A$ at the vertex corresponding to $z_{13}$ is not pure.  Hence, the Stanley--Reisner comples of $\init I_A$ is not vertex decomposable under the order use by Knutson and Miller.
\end{example}

If the Stanley--Reisner complex of $\init I_A$ is vertex decomposable by the order dictated by \cite{KM05}, we will call $A$ \newword{Knutson--Miller vertex decomposable}, a definition which is local to this paper.  We use the abbreviation CM for Cohen--Macaulay.
The middle column of \Cref{KM-vert-decomp-table} records the number of elements of $\asm(n)$ that are obtained as $1 \oplus A$ from some $A \in \asm(n-1)$ (which is equivalent to satisfying $A_{1,1} = 1$) and that are not Knutson--Miller vertex decomposable.  Note, for example, that there are two such elements of $\asm(5)$ but only one element of $\asm(4)$ that is not Knutson--Miller vertex decomposable.  Hence, there exists at least one element of $\asm(5)$ of the form $1 \oplus A$ that is not Knutson--Miller vertex decomposable though $A$ is.  This example shows that the operation $1 \oplus -$ does not preserve Knutson--Miller vertex decomposability. This failure also appears to becomes more frequent as $n$ grows.  It is in part due to the frequency of this failure for small $n$ that we are meaningfully encouraged by the preservation of Cohen--Macaulayness under $1 \oplus -$ applied to all $A \in \asm(n)$ for $n \leq 6$, which we discuss further in \Cref{conj:CM}.

\begin{figure}[h]
\begin{center}
\begin{tabular}{ | m{.5em} | m{4.9cm}| m{4.9cm} | m{4.9cm} |} 
  \hline
  n & Number of CM ASMs that are not Knutson--Miller vertex decomposable & Number of CM ASMs satisfying $A_{1,1}=1$ that are not Knutson--Miller vertex decomposable & Percentage of CM ASMs that are not Knutson--Miller vertex decomposable\\ 
  \hline
  4 & 1 & 0 & 2.6\% \\
 \hline
 5 & 35 & 2 & 10.7\%\\
 \hline
 6 & 1033 & 60 & 25.6\%\\
  \hline
  7 & 31,596 & 1538 & 45.0\%\\
  \hline
\end{tabular}
\caption{Count of Cohen--Macaulay elements of $\asm(n)$ that are and are not vertex decomposable by the Knutson--Miller ordering of the variables.}
\label{KM-vert-decomp-table}
\end{center}
\end{figure}

\subsection{Cohen--Macaulayness}\label{sec:CM}

Cohen--Macaulayness of rings and varieties is a central consideration in commutative algebra and algebraic geometry, especially in the context of intersection theory. As Hochster said, ``Life is really worth living in a Noetherian ring $R$ when all the local rings
have the property that every [system of parameters] is an $R$-sequence. Such a ring is called \emph{Cohen-Macaulay}" \cite[P.887]{Hoc78}. For background on Cohen--Macaulay rings, we direct the reader to \cite{BH93} and, especially for their import in combinatorial settings, to \cite{Hoc16}.

\begin{definition} We will say that \newword{the ASM $A$ is Cohen--Macaulay} if the ring $R/I_A$ is Cohen--Macaulay. 
\end{definition}

In light of recent work of Conca and Varbaro \cite{CV20}, we will be able to study the Cohen--Macaulayness of each $R/I_A$ via the Cohen--Macaulayness of a suitable Gr\"obner degeneration.  

\begin{prop}[\cite{Wei17, Knu, KW21}]\label{prop:initialIdealSplit}  If $w_1, \ldots, w_r, u_1, \ldots, u_k \in S_n$ and $A \in \asm(n)$ are such that \[
I_{u_1}+\ldots+I_{u_k} = I_A = I_{w_1} \cap \cdots \cap I_{w_r},
\] then 
\[
\init I_{u_1}+\ldots+\init I_{u_k} = \init I_A = \init I_{w_1} \cap \cdots \cap \init I_{w_r}.
\]
\end{prop}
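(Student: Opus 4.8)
Each of the two displayed identities splits into an easy inclusion and a substantive one. For any monomial order and any ideals, $J \subseteq K$ gives $\init J \subseteq \init K$, so $\init I_{u_1} + \cdots + \init I_{u_k} \subseteq \init I_A$ (from $I_{u_t} \subseteq I_A$); and the leading term of any element of $K \cap K'$ lies in $\init K \cap \init K'$, so $\init I_A \subseteq \init I_{w_1} \cap \cdots \cap \init I_{w_r}$. What remains is to prove the reverse inclusions. Throughout I use the following consequence of the fact recorded above, that the Fulton generators of any ASM ideal $I_B$ form an antidiagonal Gr\"obner basis: any set of polynomials lying between an antidiagonal Gr\"obner basis of $I_B$ and $I_B$ itself is again an antidiagonal Gr\"obner basis of $I_B$. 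In particular the natural generators of $I_B$ form one, so $\init I_B$ is generated by the antidiagonal terms of the $(\rk_B(i,j)+1)$-minors of $Z_{[i],[j]}$ as $(i,j)$ ranges over $[n]\times[n]$.

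For the sum, grouping the natural generators of $I_{u_1},\ldots,I_{u_k}$ by submatrix and using $I_m(Y)\subseteq I_{m'}(Y)$ for $m\ge m'$, the hypothesis $I_{u_1}+\cdots+I_{u_k}=I_A$ becomes $\sum_{(i,j)\in[n]\times[n]} I_{\rho(i,j)+1}(Z_{[i],[j]})=I_A$, where $\rho(i,j):=\min_{1\le t\le k}\rk_{u_t}(i,j)$; since $I_{u_t}\subseteq I_A$, we have $\rho\ge\rk_A$ pointwise. The crux is that $\rho$ agrees with $\rk_A$ at every essential cell of $A$. Granting this, the set $\mathcal G$ of all natural generators of all of $I_{u_1},\ldots,I_{u_k}$ contains the Fulton generators of $I_A$ (a Fulton generator of $I_A$ is a $(\rk_A(i,j)+1)$-minor of $Z_{[i],[j]}$ at an essential cell $(i,j)$, and some $u_t$ satisfies $\rk_{u_t}(i,j)=\rho(i,j)=\rk_A(i,j)$), while $\mathcal G$ lies in $I_A$ and generates it; hence $\mathcal G$ is an antidiagonal Gr\"obner basis of $I_A$, and $\init I_A=\langle\text{antidiagonal terms of }\mathcal G\rangle=\init I_{u_1}+\cdots+\init I_{u_k}$. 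To prove $\rho=\rk_A$ on $\ess(A)$: if $\rho(i_0,j_0)>\rk_A(i_0,j_0)$ for some $(i_0,j_0)\in\ess(A)$, set
\[
K:=I_{\rho(i_0,j_0)+1}(Z_{[i_0],[j_0]})+\sum_{(i,j)\neq(i_0,j_0)} I_{\rk_A(i,j)+1}(Z_{[i],[j]}).
\]
Then $I_A=\sum_{(i,j)} I_{\rho(i,j)+1}(Z_{[i],[j]})\subseteq K$ (termwise, using $\rho\ge\rk_A$), while $K\subseteq I_A$ because each summand of $K$ lies in $I_A$; thus $K=I_A$. But $V(K)$ is obtained from $X_A$ by replacing the rank condition $\rk(M_{[i_0],[j_0]})\le\rk_A(i_0,j_0)$ by the strictly weaker $\rk(M_{[i_0],[j_0]})\le\rho(i_0,j_0)$ and keeping all the others, so $V(K)\supsetneq X_A$ by the sharpness of the essential-cell rank conditions (Fulton \cite{Ful92}, Weigandt \cite{Wei17}); as $I_A$ is radical, $K\subseteq\sqrt{K}=I(V(K))\subsetneq I(X_A)=I_A$, a contradiction.

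For the intersection I induct on $r$, the case $r=1$ being vacuous. By Weigandt \cite{Wei17} there is a (unique) $B\in\asm(n)$ with $I_{w_1}\cap\cdots\cap I_{w_{r-1}}=I_B$, and then $X_B=X_{w_1}\cup\cdots\cup X_{w_{r-1}}$; by the inductive hypothesis $\init I_B=\init I_{w_1}\cap\cdots\cap\init I_{w_{r-1}}$. Moreover $I_B$ is a sum of Schubert determinantal ideals---each essential-cell summand $I_{\rk_B(i,j)+1}(Z_{[i],[j]})$ is the ideal of a matrix Schubert variety---so $I_B+I_{w_r}$ is again an ASM ideal, and the case of sums just proved gives $\init(I_B+I_{w_r})=\init I_B+\init I_{w_r}$. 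Hilbert series are preserved under passage to an initial ideal and are additive along the exact sequence $0\to R/(J\cap J')\to R/J\oplus R/J'\to R/(J+J')\to 0$; applying this twice, with $(J,J')=(I_B,I_{w_r})$ and with $(J,J')=(\init I_B,\init I_{w_r})$, and combining with the previous sentence, shows that $R/(I_B\cap I_{w_r})$ and $R/(\init I_B\cap\init I_{w_r})$ have the same Hilbert series. Since $\init(I_B\cap I_{w_r})\subseteq\init I_B\cap\init I_{w_r}$ is an inclusion of homogeneous ideals whose quotients have equal Hilbert series, it is an equality; as $I_B\cap I_{w_r}=I_A$ and $\init I_B\cap\init I_{w_r}=\init I_{w_1}\cap\cdots\cap\init I_{w_r}$, the induction closes.

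The one place where something must genuinely be proved rather than formally rearranged is the identity $\rho=\rk_A$ on $\ess(A)$ in the sum case: it converts a coincidence of ideals into a coincidence of the underlying rank data and relies on the sharpness of Fulton's essential set, and I expect it to be the main obstacle. Everything else---the reduction of the intersection statement to the sum statement via additivity of Hilbert series, and the Gr\"obner-basis squeezing---is soft, once one has the single deep input that the Fulton generators of an ASM ideal form an antidiagonal Gr\"obner basis. (An alternative to the essential-set step is to identify $\init I_A$ and the $\init I_w$ with the Stanley--Reisner ideals of their pipe-dream complexes, respectively prism-tableau complexes, following \cite{KM05, Wei17}, and argue combinatorially; I would keep the shorter essential-set route.)
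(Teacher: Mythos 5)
The paper does not prove this proposition; it records it as a citation to Weigandt, Knutson, and Klein--Weigandt, so there is no in-paper argument to compare against. Judged on its own, your sketch of the sum case is roughly in the spirit of what one would do (Gr\"obner-basis squeezing via Fulton generators), but the ``sharpness'' step as written conflates two different facts. The minimality of the essential set, where it appears in the literature, is the statement that \emph{removing} the rank condition at an essential cell strictly enlarges the variety; you need the finer statement that \emph{weakening it by one} already strictly enlarges it. This is true but requires the extra observation that for $(i_0,j_0)\in\ess(A)$ the neighboring conditions at $(i_0-1,j_0)$ and $(i_0,j_0-1)$ automatically force $\rk M_{[i_0],[j_0]}\le \rk_A(i_0,j_0)+1$ on all of $V(K)$ (since $(i_0,j_0)\in D(A)$ implies $\rk_A(i_0,j_0)=\rk_A(i_0-1,j_0)=\rk_A(i_0,j_0-1)$), so that removing and weakening-by-one produce the same variety. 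Without that observation the contradiction you derive does not follow from the cited sharpness alone.

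The intersection case has a more serious gap. You assert that by Weigandt there is an ASM $B$ with $I_{w_1}\cap\cdots\cap I_{w_{r-1}}=I_B$, and the entire induction rests on this. Weigandt's results give the opposite direction: every ASM ideal is an irredundant intersection of Schubert determinantal ideals indexed by $\Perm(A)$, and every ASM ideal is a \emph{sum} of Schubert determinantal ideals. There is no claim that an arbitrary intersection of Schubert determinantal ideals is again an ASM ideal, and in general it is not: the vanishing locus of $I_{w_1}\cap\cdots\cap I_{w_{r-1}}$ is a union of matrix Schubert varieties, which need not be cut out by northwest rank conditions even when the full intersection $I_{w_1}\cap\cdots\cap I_{w_r}$ is. In particular one cannot assume that deleting one $w_i$ from $\Perm(A)$ leaves a set of the form $\Perm(B)$. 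Since your Hilbert-series reduction to the sum case and your inductive hypothesis both require $I_B$ to be an ASM ideal with Fulton generators forming a Gr\"obner basis, this gap is fatal to the induction as written. The known proofs route around this precisely because of this obstruction: Knutson's argument establishes a Frobenius-split, hence distributive, lattice of ideals (so that $\init$ commutes with all $\cap$ and $+$ simultaneously, not one element at a time), and Weigandt's argument works combinatorially with the prism tableau/pipe dream complexes of each $\init I_w$ together with the uniqueness of reduced words recorded in the paper as \Cref{prop:components-give-reduced-words}. Either of those inputs is what is genuinely missing from your intersection argument.
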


\begin{prop}[\cite{KM05, BergeronBilley}]\label{prop:components-give-reduced-words}
    Fix $w \in S_n$.  If $P$ is an associated prime of $\init I_w$, then the generators of $P$ determine a reduced word for $w$.  Hence, $P$ is not an associated prime of any $\init I_u$ with $w \neq u \in S_n$.
\end{prop}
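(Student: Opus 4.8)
The plan is to reduce the statement to two established ingredients: the description in \cite{KM05} of the associated primes of $\init I_w$ in terms of reduced pipe dreams (RC-graphs), and the classical fact that a reduced pipe dream of $w$, read in a fixed order, yields a reduced word for $w$.

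Recall first the structural input. Since $\init I_w$ is a squarefree monomial ideal it is radical, so every associated prime of $\init I_w$ is minimal and is generated by a subset of the variables $z_{i,j}$. Knutson and Miller identify these primes explicitly: they are precisely the ideals
\[
    P_D = \langle z_{i,j} : (i,j) \in D \rangle,
\]
as $D$ ranges over the reduced pipe dreams of $w$, and each such $D$ satisfies $|D| = \ell(w)$. Thus an associated prime $P$ of $\init I_w$ is generated by the variables indexed by the cells of a reduced pipe dream $D$ for $w$, and $D$ is recovered from $P$ as the set of cells indexing its generators.

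Now fix, once and for all and independently of $w$, the standard reading order on the cells of the relevant staircase region: rows from top to bottom, and within each row from right to left, with the cell $(i,j)$ contributing the simple reflection $s_{i+j-1}$. It is classical (see \cite{BergeronBilley}) that reading any reduced pipe dream $D$ of $w$ in this order produces a reduced word $\mathbf{a}(D)$ for $w$. Combining this with the previous paragraph proves the first assertion: from the generators of an associated prime $P$ of $\init I_w$ we extract the cell set $D$ and then read off the reduced word $\mathbf{a}(D)$ for $w$. For the second assertion, suppose $P$ is also an associated prime of $\init I_u$ for some $u \in S_n$. Applying the structural input to $u$ gives $P = P_{D'}$ for some reduced pipe dream $D'$ of $u$; but $P_D = P_{D'}$ forces $D = D'$, so $\mathbf{a}(D)$ is simultaneously a reduced word for $w$ and for $u$. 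Since the permutation underlying a reduced word $(a_1,\ldots,a_\ell)$, namely $s_{a_1}\cdots s_{a_\ell}$, is uniquely determined, we conclude $w = u$.

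The only nontrivial step is the first one: the identification of the associated primes of $\init I_w$ with reduced pipe dreams is among the central results of \cite{KM05}, resting on the facts that the Fulton generators of $I_w$ form a Gr\"obner basis under any antidiagonal term order and that the resulting Stanley--Reisner complex admits the mitosis/subword-complex analysis carried out there. Granting this, everything else is formal: passing from a monomial prime to its underlying cell set is a tautology, the reading rule is a single fixed combinatorial recipe so that $\mathbf{a}(D)$ depends only on the generating set of $P$, and a reduced word visibly determines the permutation it expresses.
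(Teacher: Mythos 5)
The paper does not supply its own proof of this proposition; it simply cites \cite{KM05, BergeronBilley}, and your proposal correctly unpacks exactly the two ingredients those citations provide: the Knutson--Miller identification of the minimal primes of $\init I_w$ (which are all of the associated primes, since $\init I_w$ is squarefree, hence radical) with the reduced pipe dreams of $w$, and the Bergeron--Billey reading rule sending a pipe dream to a reduced word via $(i,j)\mapsto s_{i+j-1}$ in a fixed order. The passage from the first sentence to the second — a reduced word determines its permutation, so $P=P_D=P_{D'}$ forces $w=u$ — is sound, so your reconstruction matches the intended argument.
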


\begin{prop}
    \label{thm: initIdealCM}
Fix $A \in \asm(n)$.  Then $ R/I_A$ is Cohen--Macaulay if and only if $R/\init I_A$ is Cohen--Macaulay.
\end{prop}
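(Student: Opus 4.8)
The plan is to deduce this from the square-free Gr\"obner degeneration theorem of Conca and Varbaro \cite{CV20}, together with the standard fact that passage to an initial ideal preserves the Hilbert function and hence Krull dimension.

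First I would record the two structural features of $I_A$ that make this machinery applicable. The ideal $I_A$ is homogeneous: its Fulton generators are minors of the generic matrix $Z$, and a $k \times k$ minor is a homogeneous polynomial of degree $k$. Next, fix any antidiagonal term order. Since the Fulton generators of $I_A$ form a Gr\"obner basis with respect to it (\cite{KM05, Knu, Wei17}), the ideal $\init I_A$ is generated by the leading terms of those minors, each of which is the product of the variables along the antidiagonal of the corresponding submatrix of $Z$ --- in particular a squarefree monomial. Hence $\init I_A$ is a squarefree monomial ideal.

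Now I would invoke \cite{CV20}: when $\init I$ is squarefree for a homogeneous ideal $I$, one has $\operatorname{depth}(R/I) = \operatorname{depth}(R/\init I)$. Applying this with $I = I_A$ gives $\operatorname{depth}(R/I_A) = \operatorname{depth}(R/\init I_A)$. On the other hand, a Gr\"obner degeneration is a flat family, so $R/I_A$ and $R/\init I_A$ have the same Hilbert function and thus $\dim(R/I_A) = \dim(R/\init I_A)$. Combining the two equalities, $\operatorname{depth}(R/I_A) = \dim(R/I_A)$ holds if and only if $\operatorname{depth}(R/\init I_A) = \dim(R/\init I_A)$ holds; that is, $R/I_A$ is Cohen--Macaulay if and only if $R/\init I_A$ is.

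I do not anticipate a genuine obstacle here beyond verifying the hypotheses of the cited theorem; the only point requiring a moment of care is confirming that $\init I_A$ is honestly squarefree, which is immediate from the Gr\"obner basis property of the Fulton generators under an antidiagonal order together with the fact that the antidiagonal term of a minor of a matrix of distinct variables is a product of distinct variables. (A self-contained argument avoiding \cite{CV20} would be substantially harder, requiring one to compare local cohomology of $R/I_A$ and $R/\init I_A$ directly; given \cite{CV20} this is unnecessary.)
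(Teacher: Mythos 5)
Your proof is correct and follows essentially the same route as the paper: both reduce to the Conca--Varbaro theorem after observing that $I_A$ is homogeneous with a radical (squarefree) antidiagonal initial ideal. The only cosmetic difference is how radicality of $\init I_A$ is verified --- you read it off directly from the squarefree lead terms of the Fulton--Gr\"obner basis, while the paper deduces it from the decomposition $\init I_A = \init I_{w_1} \cap \cdots \cap \init I_{w_r}$ and the radicality of each $\init I_w$; both are legitimate.
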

\begin{proof}
The ideal $I_A$ is homogeneous.  Its initial ideal $\init I_A$ is radical by \Cref{prop:initialIdealSplit} because each $\init I_w$ is radical for $w \in S_n$ \cite[Theorem B]{KM05}. The main theorem of \cite{CV20} states that a homogeneous ideal with a radical initial ideal defines a Cohen--Macaulay quotient if and only if the radical initial ideal does.
\end{proof}

Our primary interest in what follows will be to infer Cohen--Macaulayness of $A$ from Cohen--Macaulayness of $1 \oplus A$.  Because the argument is essentially the same, we will prove something slightly stronger using the construction below.  In the notation that follows, $1 \oplus A = \widetilde{A}(1,1)$.

Fix $A \in \asm(n)$ and $i, j \in [n]$. We will build an element of $\asm(n+1)$, which we denote $\widetilde{A} = \widetilde{A}(i,j)$, by the following rules: 
\[
\widetilde{A}_{a,b}=\begin{cases}
			A_{a,b}, & \text{if $a<i$ and $b<j$}\\
            A_{a,b-1}, & \text{if $a<i$ and $b>j$}\\
            A_{a-1,b}, & \text{if $a>i$ and $b<j$}\\
            A_{a-1,b-1}, & \text{if $a>i$ and $b>j$}\\
            1, & \text{if $(a,b)=(i,j)$}\\
            0, & \text{if $a=i$ xor $b=j$}.
		 \end{cases}
\]

\begin{example}\label{ex:widetilde{A}}
    Returning to \Cref{ex:permBij}, consider $A = \begin{pmatrix}
        0 & 1 & 0\\
        1 & -1 & 1\\
        0 & 1 & 0
    \end{pmatrix}$ and then $\widetilde{A}  = \widetilde{A}(2,3)= \begin{pmatrix}
        0 & 1 & {\color{blue}0} & 0\\
        {\color{blue}0} & {\color{blue}0} & {\color{blue}1} & {\color{blue}0}\\
         1 & -1 & {\color{blue}0} & 1\\
        0 & 1 & {\color{blue}0} & 0
    \end{pmatrix}$ (which in \Cref{ex:permBij} was named $B$).  The entries of $\widetilde{A}$ inherited from $A$ appear in black, and the new entries appear in blue.  
\end{example}

The reader may verify that the Rothe diagram of $A$ embeds into the Rothe diagram of $\widetilde{A}$.  This close relationship may cause one to hope that the geometry of $X_A$ and $X_{\widetilde{A}}$ would be very closely related.  However, as we discussed in \Cref{ex:permBij}, $X_A$ may be Cohen--Macaulay while $X_{\widetilde{A}}$ is not even unmixed. We now investigate special cases when $X_A$ and $X_{\widetilde{A}}$ are closely related.

As before, for $\tau \subseteq [n]$, let $\mathbf{x}_\tau = \prod_{i \in \tau} x_i$.  Recall that, for an ideal $I$ and element $f$ of $R$, we define the colon ideal $(I:f) = (r \in R \mid rf \in I)$.

\begin{lemma}\label{lem:link-colon}
   Let $\Delta$ be a simplicial complex on $[n]$. If $\sigma$ is a face of $\Delta$, then \[
   I_{\lk_\sigma(\Delta)} = (I_{\Delta}: \mathbf{x}_\sigma)+(z_{i,j} \mid (i,j) \in \sigma).
   \]
\end{lemma}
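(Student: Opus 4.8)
The plan is to prove the ideal equality by a direct combinatorial analysis of the monomial generators on each side, using the Stanley--Reisner dictionary: a squarefree monomial $\mathbf{x}_\tau$ lies in $I_\Delta$ exactly when $\tau \notin \Delta$. Both sides of the claimed identity are monomial ideals, so it suffices to compare their squarefree monomial generators (every associated prime and every minimal generator of a squarefree monomial ideal is squarefree, and the colon of a squarefree monomial ideal by a squarefree monomial is again squarefree). So I would reduce to showing that for $\tau \subseteq [n]$, the monomial $\mathbf{x}_\tau$ lies in the left-hand side if and only if it lies in the right-hand side, and in fact it is cleanest to characterize membership via the \emph{non-faces}.

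First I would unwind $\lk_\sigma(\Delta) = \{\tau \in \Delta \mid \tau \cap \sigma = \emptyset,\ \tau \cup \sigma \in \Delta\}$. A subset $\tau \subseteq [n]$ fails to be a face of $\lk_\sigma(\Delta)$ precisely when either $\tau \cap \sigma \neq \emptyset$, or $\tau \cap \sigma = \emptyset$ but $\tau \cup \sigma \notin \Delta$. Thus $I_{\lk_\sigma(\Delta)}$ is generated by the $z_{i,j}$ for $(i,j) \in \sigma$ (covering the first case, since any $\tau$ meeting $\sigma$ contains such a variable) together with the monomials $\mathbf{x}_\tau$ where $\tau \cap \sigma = \emptyset$ and $\tau \cup \sigma \notin \Delta$. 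On the other side, $(I_\Delta : \mathbf{x}_\sigma)$ is generated by monomials $\mathbf{x}_\tau$ such that $\mathbf{x}_\tau \mathbf{x}_\sigma \in I_\Delta$, i.e.\ such that $\tau \cup \sigma \notin \Delta$ (here I use that $\mathbf{x}_\tau \mathbf{x}_\sigma$ and $\mathbf{x}_{\tau \cup \sigma}$ have the same radical and $I_\Delta$ is radical, or just that $I_\Delta$ is a monomial ideal so membership of $\mathbf{x}_\tau \mathbf{x}_\sigma$ is equivalent to $\tau \cup \sigma$ being a non-face). So $(I_\Delta : \mathbf{x}_\sigma) + (z_{i,j} \mid (i,j)\in\sigma)$ is generated by all $z_{i,j}$ with $(i,j)\in\sigma$ together with all $\mathbf{x}_\tau$ with $\tau\cup\sigma\notin\Delta$.

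It remains to reconcile the two monomial generating sets. The right-hand side has the extra generators $\mathbf{x}_\tau$ for $\tau$ with $\tau \cap \sigma \neq \emptyset$ and $\tau \cup \sigma \notin \Delta$, which are not literally among the link generators; but any such $\tau$ contains some $(i,j) \in \sigma$, so $\mathbf{x}_\tau$ is divisible by $z_{i,j}$ and hence already lies in the ideal $(z_{i,j} \mid (i,j)\in\sigma)$. Conversely, the link generators $\mathbf{x}_\tau$ with $\tau\cap\sigma=\emptyset$, $\tau\cup\sigma\notin\Delta$ appear verbatim on the right. Hence the two ideals have the same monomials and therefore coincide. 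I expect the only subtlety — the "main obstacle," though it is minor — to be the careful bookkeeping in the degenerate cases where $\tau \cap \sigma \neq \emptyset$ or where $\tau \cup \sigma$ is handled as a set rather than as the support of the product $\mathbf{x}_\tau\mathbf{x}_\sigma$; once one notes that $I_\Delta$ being a squarefree monomial ideal lets one pass freely between $\mathbf{x}_\tau \mathbf{x}_\sigma \in I_\Delta$ and $\tau \cup \sigma \notin \Delta$, the argument closes immediately.
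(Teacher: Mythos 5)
Your proof is correct and takes essentially the same approach as the paper: both argue by checking, squarefree monomial by squarefree monomial, splitting into the cases $\tau \cap \sigma \neq \emptyset$ and $\tau \cap \sigma = \emptyset$, and in the latter case using the chain of equivalences $\mathbf{x}_\tau \in I_{\lk_\sigma(\Delta)} \iff \tau \cup \sigma \notin \Delta \iff \mathbf{x}_\tau \in (I_\Delta : \mathbf{x}_\sigma)$. Your remark that membership of $\mathbf{x}_\tau\mathbf{x}_\sigma$ in the squarefree monomial ideal $I_\Delta$ depends only on the support $\tau \cup \sigma$ handles the one degenerate point the paper sidesteps by restricting the iff chain to the disjoint case.
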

\begin{proof}
    Fix a a squarefree monomial $\mathbf{x}_\tau$ corresponding to the subset $\tau$ of $[n]$.  If $\tau \cap \sigma \neq \emptyset$, then $\mathbf{x}_\tau \in (z_{i,j} \mid (i,j) \in \sigma)$.  Also, by the definition of link, $\tau \notin \lk_\sigma(\Delta)$, and so $\mathbf{x}_\tau \in I_{\lk_\sigma(\Delta)}$.  

    Now assume $\tau \cap \sigma = \emptyset$.  Then \[
\mathbf{x}_\tau \in I_{\lk_{\sigma(\Delta)}} \iff \tau \notin \lk_\sigma(\Delta) \iff \tau \cup \sigma \notin \Delta \iff \mathbf{x}_\tau \mathbf{x}_\sigma= \mathbf{x}_{\tau \cup \sigma} \in I_\Delta \iff \mathbf{x}_\tau \in I_\Delta:\mathbf{x}_\sigma. \qedhere
    \]
\end{proof}

For $A \in \asm(n)$, we write $\Delta(A)$ for the Stanley--Reisner complex of $\init I_A$.

\begin{thm}\label{topPermRow}
Let $A \in \asm(n)$, fix $j \in [n+1]$, and set $\widetilde{A} = \widetilde{A}(1,j)$.
If $\widetilde{A}$ is Cohen--Macaulay then $A$ is Cohen--Macaulay.
\end{thm}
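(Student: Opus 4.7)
The plan is to invoke \Cref{thm: initIdealCM} to reduce to initial ideals, and then exhibit $\Delta(A)$ as a link of a face of $\Delta(\widetilde{A})$; since links of Cohen--Macaulay simplicial complexes are themselves Cohen--Macaulay, this yields the conclusion.

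By \Cref{thm: initIdealCM} applied to both $A$ and $\widetilde{A}$, it suffices to show that $R/\init I_A$ is Cohen--Macaulay whenever $R'/\init I_{\widetilde{A}}$ is (here $R$ and $R'$ are the polynomial rings in the entries of the $n \times n$ and $(n+1) \times (n+1)$ generic matrices, respectively). By \Cref{prop:initialIdealSplit}, both initial ideals are squarefree monomial, so we may work with the Stanley--Reisner complexes $\Delta(\widetilde{A})$ and $\Delta(A)$.

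The structural bridge between $\widetilde{A}$ and $A$ is the natural shift $\phi\colon R \to R'$ given by $\phi(z_{a, b}) = z'_{a+1, b}$ for $b < j$ and $\phi(z_{a, b}) = z'_{a+1, b+1}$ for $b \geq j$. One checks directly from the construction of $\widetilde{A}(1,j)$ that the essential cells of $\widetilde{A}$ outside row $1$ are in bijection with the essential cells of $A$ via $\phi$, with ranks either preserved (for $b < j$) or increased by $1$ (for $b \geq j$), while the single essential cell of $\widetilde{A}$ in row $1$, namely $(1, j-1)$ when $j > 1$, contributes the linear Fulton generators $z'_{1, 1}, \ldots, z'_{1, j-1}$. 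The plan is to take $\sigma$ to be the set of all vertices of $\Delta(\widetilde{A})$ that do not lie in the image under $\phi$ of the vertex set of $\Delta(A)$, with exactly one variable removed per minimal monomial generator of $\init I_{\widetilde{A}}$ whose support lies wholly within this set (so that $\sigma$ becomes a face). By \Cref{lem:link-colon}, the ideal of $\lk_\sigma(\Delta(\widetilde{A}))$ is then $(\init I_{\widetilde{A}} : \mathbf{x}_\sigma) + (z'_{a, b} : (a, b) \in \sigma)$; the ``excluded'' variables, being forced to zero by leftover monomial relations surviving the colon, become additional linear generators of the link ideal, so the effective vertex set of $\lk_\sigma(\Delta(\widetilde{A}))$ is precisely $\phi(\text{vertices of } \Delta(A))$. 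We then argue that the resulting complex is isomorphic to $\Delta(A)$ via $\phi$.

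The main obstacle is verifying that the colon computation produces exactly $\phi(\init I_A)$ on the remaining vertices. This calls for a careful analysis of the Fulton generators at essential cells $(i+1, b+1)$ of $\widetilde{A}$ with $b \geq j$, namely $(r+2) \times (r+2)$ minors of $Z'_{[i+1], [b+1]}$ whose leading antidiagonals may involve row $1$ or the new column $j$. Expanding along row $1$ (modulo the linear generators $z'_{1, 1}, \ldots, z'_{1, j-1}$) and along the new column $j$ (whose below-$(1,j)$ entries lie in $\sigma$), one shows that each such leading antidiagonal, after division by $\gcd(\cdot, \mathbf{x}_\sigma)$, is either the $\phi$-image of the initial monomial of a Fulton generator of $I_A$ at $(i, b)$ of rank $r$, or else is redundant modulo such $\phi$-images from essential cells of $A$. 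Once this is established, $\lk_\sigma(\Delta(\widetilde{A})) \cong \Delta(A)$ under the identification given by $\phi$, and the theorem follows from the fact that links of Cohen--Macaulay complexes are Cohen--Macaulay.
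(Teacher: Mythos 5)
Your high-level strategy coincides with the paper's: reduce to antidiagonal initial ideals via \Cref{thm: initIdealCM}, realize $\Delta(A)$ as a link inside $\Delta(\widetilde{A})$ using \Cref{lem:link-colon}, and invoke the fact that links of Cohen--Macaulay complexes are Cohen--Macaulay. However, the specific identification you choose breaks the argument at its central step, which is also the step you defer (``one shows that\ldots'') rather than carry out. Your map $\phi$ shifts every row down by one and every column $b \geq j$ to the right by one, and you link at (essentially all of) row $1$ and column $j$ of $Z'$. The difficulty is that inserting the new column $j$ into the \emph{interior} of the column set of a submatrix scrambles its antidiagonal: if $M \subseteq Z_{[a],[b]}$ gives a Fulton generator of $I_A$ at $(a,b)$ with $b \geq j$, then in the augmented submatrix of $Z'$ (rows of $\phi(M)$ plus row $1$, columns plus column $j$) the top row pairs with the \emph{rightmost} column, not with column $j$, so its antidiagonal is not of the form $(\text{variables in }\sigma)\cdot \phi\bigl(\init \det M\bigr)$, and the colon does not recover $\phi(\init I_A)$. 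Concretely, take $n=2$, $A = \left(\begin{smallmatrix}0&1\\1&0\end{smallmatrix}\right)$, $j=1$, so $\widetilde{A} = 1 \oplus A$. Then $\init I_A = (z_{1,1})$ with $\phi(z_{1,1}) = z'_{2,2}$, while $\init I_{\widetilde{A}} = (z'_{1,2}z'_{2,1})$. The variable $z'_{2,2}$ appears in no generator of $\init I_{\widetilde{A}}$ (being the southeast corner of the region weakly northwest of the essential cell $(2,2)$, it can only occur as a diagonal, never an antidiagonal, entry of a relevant minor), so no colon by a face followed by adding variables of row $1$ and column $1$ yields an ideal containing $z'_{2,2}$. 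Your claimed isomorphism $\lk_\sigma(\Delta(\widetilde{A})) \cong \Delta(A)$ via $\phi$ therefore fails: here the link, restricted to the image of $\phi$, is a full simplex, while $\Delta(A)$ has $z_{1,1}$ as a non-vertex. The same failure occurs whenever $\Dom(A)$ meets a column weakly east of $j$.

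The paper sidesteps this by not shifting the old variables at all: it reindexes the rows of $\widetilde{A}$ by $\{0,\ldots,n\}$ and its columns by $[n+1]$, identifying $Z$ literally with the submatrix of $Z'$ on rows $[n]$ and columns $[n]$, so the only genuinely new variables are those in row $0$. A Fulton generator $\det(M)$ of $I_A$ at $(a,b)$ with $b \geq j$ is augmented by row $0$ and column $b+1$; since the new entry $z_{0,b+1}$ lands at the \emph{northeast corner} of the augmented matrix, one gets the clean multiplicative identity $\init\det(M') = z_{0,b+1}\init\det(M)$, and it suffices to colon out the single face $\{(0,j+1),\ldots,(0,n+1)\}$, with the remaining unused new variables appearing only as cone points (which do not affect Cohen--Macaulayness). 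If you want to repair your write-up, replacing $\phi$ with this non-shifting identification and shrinking $\sigma$ to row $0$ east of column $j$ is the essential fix; as written, the proof has a genuine gap.
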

\begin{proof}
    Recall that the antidiagonal initial ideals of ASM ideals are radical. Recall also that, if a homogeneous ideal $I$ possesses a radical initial ideal $J$, then $I$ is a Cohen--Macaulay ideal if and only if $J$ is a Cohen--Macaulay ideal \cite{CV20}.  Hence, it suffices to show that, if $\init I_{\widetilde{A}}$ is a Cohen--Macaulay ideal, then $\init I_A$ is a Cohen--Macaulay ideal.  We will show that, after a relabeling of vertices and deconing, $\Delta(A)$ is obtained as a link of $\Delta(\widetilde{A})$, from which the result will follow because links of Cohen--Macaulay simplicial complexes are Cohen--Macaulay \cite{Rei76}.

    For convenience, we will choose a non-standard indexing on the generic matrix from which equations for $I_{\widetilde{A}}$ are defined.  Let $Z' = (z_{a,b})$ be an $n \times n$ generic matrix with $0 \leq a \leq n$ and $1 \leq b \leq n+1$.  Correspondingly, index the rows of $\widetilde{A}$ with the set $\{0,1,\ldots, n\}$ and the columns with $\{1,\ldots,n+1\}$, and consider $\Delta(\widetilde{A})$ as a complex on $\{0,\ldots,n\} \times [n+1]$ with vertex $(a,b)$ corresponding to variable $z_{a,b}$ in the usual way.  We retain the usual indexing for $A$ and use $Z = (z_{a,b})$ with $a,b \in [n]$ to define $I_A$.

    \begin{example}
If $A = \begin{pmatrix}
        \boxed{0} & 1 & 0\\
        1 & \boxed{-1} & 1\\
        0 & 1 & 0
    \end{pmatrix}$ and $j=2$, then $\widetilde{A} = \begin{pmatrix}
          {\color{blue}0} & {\color{blue}1} & {\color{blue}0} & {\color{blue}0}\\
        \boxed{0} & {\color{blue}0} & 1 &  0\\
         1 & {\color{blue}0} & \boxed{-1}  & 1\\
        0 & {\color{blue}0} & 1  & 0
    \end{pmatrix}$.  With the prescribed indexing, $\ess(A) = \{(1,1),(2,2)\}$ and $\ess(\widetilde{A}) = \{(1,1),(2,3)\}$.  Note also that $\rk_{\widetilde{A}}(1,1) =\rk_A(1,1)$ (because column $1$ is to the left of column $j=2$) and that $\rk_{\widetilde{A}}(2,3) = \rk_A(2,2)+1$ (because column $3$ is to the right of column $j=2$).  Then \[
    I_A = \left(z_{1,1}, \begin{vmatrix} 
z_{1,1} & z_{1,2}\\
z_{2,1} & z_{2,2}
    \end{vmatrix}\right) \mbox{, and } I_{\widetilde{A}} = \left(z_{0,1},z_{1,1}, \begin{vmatrix} 
z_{0,1} & z_{0,2} & z_{0,3}\\
z_{1,1} & z_{1,2} & z_{1,3}\\
z_{2,1} & z_{2,2} & z_{2,3}
    \end{vmatrix}\right), 
    \] where the matrix entries corresponding to essential cells are boxed.
    \end{example}

    We claim that, with this choice of indexing, \begin{align}\label{CM-link-main-claim}
    \init I_A+(z_{0,1}, \ldots,z_{0,j-1}, z_{0,j+1}, \ldots,  z_{0,n+1}) &= (\init I_{\widetilde{A}}:z_{0,j+1}\cdots z_{0,n+1})+(z_{0,j+1}, \ldots, z_{0,n+1}) \\
    &= I_{\lk_{\{(0,j+1), \ldots, (0,n+1)\}}\Delta(\widetilde{A})}.\notag \end{align}  
    
    Because $I_A$ has a generating set that does not involve the variables $z_{0,1}, \ldots, z_{0,n+1}$, $\init I_A$ is a Cohen--Macaulay ideal if and only if $\init I_A+(z_{0,1}, \ldots z_{0,j-1}, z_{0,j+1}, \ldots , z_{0,n+1})$ is.  Similarly, $\init I_{\widetilde{A}}:z_{0,j+1}\cdots z_{0,n+1}$ has a generating set that does not involve the variables $z_{0,j+1}, \ldots, z_{0,n+1}$, and so $(\init I_{\widetilde{A}}:z_{0,j+1}\cdots z_{0,n+1})+(z_{0,j+1}, \ldots, z_{0,n+1})$ is a Cohen--Macaulay ideal because $\init I_{\widetilde{A}}:z_{0,j+1}\cdots z_{0,n+1}$ is.  Hence, the desired result will follow from establishing \Cref{CM-link-main-claim}.

    The latter equality of \Cref{CM-link-main-claim} follows from \Cref{lem:link-colon}. 
 In order to establish the first equality of \Cref{CM-link-main-claim}, we will first show \[
 \init I_A+(z_{0,1}, \ldots, z_{0,j-1}, z_{0,j+1}, \ldots, z_{0,n+1}) \subseteq (\init I_{\widetilde{A}}:z_{0,j+1}\cdots z_{0,n+1})+(z_{0,j+1}, \ldots, z_{0,n+1}).
 \] We note that the containments $(z_{0,1}, \ldots, z_{0,j-1}) \subseteq \init I_{\widetilde{A}} \subseteq \init I_{\widetilde{A}}:z_{0,j+1}\cdots z_{0,n+1}$ are immediate from the definitions of rank function, ASM ideal, and colon ideal.  Fix $\mu \in \init I_A$.  We may assume that $\mu$ is the leading term of some Fulton generator $f$ of $I_A$ determined by essential cell $(a,b)$ of $A$.  
    
    If $b<j$, then $(a,b)$ is also an essential cell of $\widetilde{A}$ and $\rk_A(a,b) = \rk_{\widetilde{A}}(a,b)$.  Hence, $f$ is a Fulton generator of $I_{\widetilde{A}}$, and so $\mu \in \init I_{\widetilde{A}} \subseteq \init I_{\widetilde{A}}:z_{0,j+1}\cdots z_{0,n+1}$.  
    
    If $b \geq j$, then $(a,b+1)$ is an essential cell of $\widetilde{A}$ and $\rk_A(a,b) = \rk_{\widetilde{A}}(a,b+1)-1$.  Let $M$ be the $(\rk_A(a,b)+1) \times (\rk_A(a,b)+1)$ submatrix of $Z$ so that $f = \det(M)$.  Form a submatrix $M'$ of $Z'$ by augmenting the set of rows of $M$ by $0$ and the set of columns of $M$ by $b+1$.  Then $M'$ is a $(\rk_{\widetilde{A}}(a,b+1)+1) \times \rk_{\widetilde{A}}(a,b+1)+1)$ submatrix of $Z'$ weakly northwest of $(a,b+1)$.  Hence, $\det(M')$ is a Fulton generator of $I_{\widetilde{A}}$.  But $z_{0,b+1}\mu = \init \det(M')$, and so $\mu \in \init I_{\widetilde{A}}:z_{0,j+1}\cdots z_{0,n+1}$, as desired.

    It remains to show \[
    (\init I_{\widetilde{A}}:z_{0,j+1}\cdots z_{0,n+1})+(z_{0,j+1}, \ldots, z_{0,n+1}) \subseteq \init I_A+(z_{0,1}, \ldots,z_{0,j-1}, z_{0,j+1}, \ldots,  z_{0,n+1}),
    \] for which it suffices to show $\init I_{\widetilde{A}}:z_{0,j+1}\cdots z_{0,n+1} \subseteq \init I_A+(z_{0,1}, \ldots,z_{0,j-1}, z_{0,j+1}, \ldots,  z_{0,n+1})$.  Fix a monomial $\nu \in \init I_{\widetilde{A}}:z_{0,j+1}\cdots z_{0,n+1}$.  First suppose $\nu \in \init I_{\widetilde{A}}$.  We may assume that $\nu$ is the leading term of some Fulton generator $g$ of $I_{\widetilde{A}}$.  Then there exist $(c,d) \in \ess(\widetilde{A})$ and $N'$ an $(\rk_{\widetilde{A}}(c,d)+1) \times (\rk_{\widetilde{A}}(c,d)+1)$ submatrix of $Z'$ weakly northwest of $(c,d)$ so that $g = \det(N')$.

    If $d<j$, then, either $\nu \in (z_{0,1}, \ldots, z_{0,j-1})$, which occurs if $N'$ involves row $0$, or $g$ is a Fulton generator of $I_A$ belonging to essential cell $(c,d)$, which occurs if $N'$ does not involve row $0$.  If $d > j$, then $\nu$ is equal to the product of the northeast entry of $N'$ with the lead term of a Fulton generators of $A$ belonging to essential cell $(c,d-1)$ of $A$.  Note that $d \neq j$ because $\widetilde{A}$ has no essential cells in column $j$.
    
    Finally, suppose $\nu \in (\init I_{\widetilde{A}}:z_{0,j+1}\cdots z_{0,n+1})\setminus \init I_{\widetilde{A}}$.  Because the variables $z_{0,k}$, $k \in [j+1,n+1]$, all belong to the same row, each minimal generator of $\init I_{\widetilde{A}}$ is divisible by at most one such $z_{0,k}$.  We may assume there exists $k\in [j+1,n+1]$ so that $z_{0,k}\nu$ is the product of antidiagonal entries of a submatrix of $Z'$ pertaining to an essential cell $(c,d)$ of $\widetilde{A}$ with $d>j$.  Then $\nu$ is the product of antidiagonal entries of a submatrix of $Z$ pertaining to essential cell $(c,d-1)$, which is to say that $\nu \in \init I_A$, completing the proof.
\end{proof}

We have now seen that $A$ is equidimensional if and only if $1 \oplus A$ is equidimensional and that $1 \oplus A$ Cohen--Macaulay implies $A$ Cohen--Macaulay.  A Macaulay2 computation confirms that, over the field of rational numbers, if $A \in \asm(n)$ with $n \leq 6$ and $A$ is Cohen--Macaulay, then  that $1 \oplus A$ is Cohen--Macaulay.  Based on this evidence, we make the following conjecture:

\begin{conjecture}\label{conj:CM}
    The ASM $A$ is Cohen--Macaulay if and only if $1 \oplus A$ is Cohen--Macaulay.
\end{conjecture}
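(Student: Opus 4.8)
The forward implication --- that $1\oplus A$ Cohen--Macaulay forces $A$ Cohen--Macaulay --- is exactly \Cref{topPermRow} with $j=1$, so the work is all in the reverse direction. By \Cref{thm: initIdealCM} it is equivalent to argue at the level of Stanley--Reisner complexes: if $\Delta(A)$ is Cohen--Macaulay then so is $\Delta(1\oplus A)$; and here we may assume $A$ is equidimensional, since a non-equidimensional ASM is never Cohen--Macaulay. The plan is to reconstruct $\Delta(1\oplus A)$ from $\Delta(A)$ by reinstating the ``new row'' one vertex at a time, gluing at each stage with the standard Mayer--Vietoris fact: if $\Delta_1,\Delta_2$ are Cohen--Macaulay of dimension $d$ and $\Delta_1\cap\Delta_2$ is Cohen--Macaulay of dimension $d-1$, then $\Delta_1\cup\Delta_2$ is Cohen--Macaulay of dimension $d$.

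Concretely, index $1\oplus A$ so that its first row is the inserted row, let $v_j$ be the vertex of $\Delta(1\oplus A)$ attached to the generic-matrix entry in row $1$ and column $j$, and first decone away whichever $v_j$ are free vertices of $\Delta(1\oplus A)$. Order the remaining new-row vertices and let $\Delta^{(k)}$ be $\Delta(1\oplus A)$ with the first $k$ of them deleted, so $\Delta^{(0)}=\Delta(1\oplus A)$ and the terminal complex $\Delta^{(m)}$ has Stanley--Reisner ideal involving no new-row variable. Analyzing the Fulton generators as in the proof of \Cref{topPermRow}, $\Delta^{(m)}$ is the antidiagonal initial complex of a (mixed ladder) determinantal ideal obtained from $I_A$ by a generic-pivot operation --- in the small examples a cone over $\Delta(A)$ or a simplex --- and showing it is Cohen--Macaulay is the base case. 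For $k<m$ one has $\Delta^{(k)}=\del_v\Delta^{(k)}\cup\operatorname{cone}_v\!\big(\lk_v\Delta^{(k)}\big)$ with $\del_v\Delta^{(k)}=\Delta^{(k+1)}$, where $v$ is the $(k{+}1)$-st new-row vertex; so by downward induction and the gluing fact it suffices to show that at each stage either $v$ has become a free vertex of $\Delta^{(k)}$ (in which case the step is immediate) or $\lk_v\Delta^{(k)}$ is Cohen--Macaulay of dimension $\dim\Delta^{(k)}-1$. By \Cref{lem:link-colon} this link is cut out by $\big(\init I_{1\oplus A}+(\text{deleted new-row variables})\big):x_v$ together with $x_v$; the hope is to identify it, after deconing, with $\Delta(C)$ for an ASM $C$ built from $A$ --- plausibly a matrix $\widetilde A(1,j)$ of \Cref{topPermRow} with further columns zeroed --- so that Cohen--Macaulayness propagates up the tower from the hypothesis on $\Delta(A)$.

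A more global alternative uses the prime decomposition: by \Cref{prop:initialIdealSplit} and \Cref{prop:permBij}, $\init I_{1\oplus A}=\bigcap_{w\in\Perm(A)}\init I_{1\oplus w}$ and $\init I_A=\bigcap_{w\in\Perm(A)}\init I_w$, and each $\Delta(1\oplus w)$ is Cohen--Macaulay, in fact vertex decomposable, by \cite{KM05}. One would compare the nerve of $\{\Delta(1\oplus w)\}_{w\in\Perm(A)}$ and the dimensions of all partial intersections $\bigcap_{w\in S}\Delta(1\oplus w)$ with the corresponding data for $\{\Delta(w)\}_{w\in\Perm(A)}$, which differ only by the uniform shift $\dim\Delta(1\oplus w)-\dim\Delta(w)=2n+1$, and hope that a Mayer--Vietoris bookkeeping transfers Cohen--Macaulayness from $\Delta(A)$ to $\Delta(1\oplus A)$. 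In both approaches the crux --- and the step I expect to be genuinely hard --- is controlling the intermediate links (respectively, the intersections $\Delta(1\oplus w)\cap\Delta(1\oplus w')$, which when reduced are again ASM complexes and so present a smaller instance of the same problem) and proving they are Cohen--Macaulay. The data of \Cref{KM-vert-decomp-table}, showing that $1\oplus-$ destroys Knutson--Miller vertex decomposability with increasing frequency, warn that no combinatorially uniform shelling or vertex-decomposition order will certify these links; completing the argument seems to require genuinely new structural input --- about the geometry of $X_{1\oplus A}$, or about free resolutions of ASM ideals in the spirit of the Castelnuovo--Mumford regularity comparison of \cite{PSW24}.
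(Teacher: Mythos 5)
You are reviewing a conjecture, not a theorem: the paper does not prove \Cref{conj:CM}. What the paper establishes is exactly what you identify as the easy direction: \Cref{topPermRow} with $j=1$ (using $1\oplus A = \widetilde{A}(1,1)$) shows that Cohen--Macaulayness of $1\oplus A$ forces Cohen--Macaulayness of $A$, and \Cref{prop:permBij} gives the analogous equivalence for equidimensionality. The converse --- that $A$ Cohen--Macaulay implies $1\oplus A$ Cohen--Macaulay --- is supported in the paper only by a Macaulay2 verification for $n\leq 6$. Your proposal is therefore faithful to the actual state of the art: you prove the one direction the paper proves (by the same citation), and you honestly flag the other direction as open rather than claiming to close it.

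Your two sketches for the open direction are reasonable but, as you yourself note, incomplete, and it is worth being precise about why they do not obviously close the gap. In the Mayer--Vietoris tower, the load-bearing claim is that $\lk_v \Delta^{(k)}$ is Cohen--Macaulay of dimension one less than $\dim \Delta^{(k)}$ at every stage, and you would need to recognize each such link, after deconing, as $\Delta(C)$ for some ASM $C$ whose Cohen--Macaulayness you can control. The data in \Cref{KM-vert-decomp-table} are a concrete warning here: the operation $1\oplus-$ destroys the Knutson--Miller vertex-decomposition order with increasing frequency, so there is no reason to expect a uniform choice of new-row vertex ordering to make the intermediate links tractable, and each link needs its own argument. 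In the prime-decomposition variant, you are right that $\Delta(1\oplus w)\cap\Delta(1\oplus w')$ corresponds to $\init I_{1\oplus B}$ with $I_B = I_w+I_{w'}$, but calling this ``a smaller instance of the same problem'' undersells the danger: $\Perm(B)$ can be large, so proving those intersections are Cohen--Macaulay risks assuming the conjecture for $B$ as input, which is circular unless one has some well-founded induction on, say, $|\Perm(\cdot)|$ or an interval in Bruhat order. The dimension-shift bookkeeping (the uniform shift by $2n+1$) is correct, but a shared shift does not by itself transport Cohen--Macaulayness through a nerve-theoretic argument; you would also need the partial intersections on the two sides to have matching homological vanishing, which is not established. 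In short, the missing ingredient you name --- structural control of the intermediate links or intersections --- is genuinely the crux, and it is precisely what the paper leaves open by stating this as a conjecture rather than a theorem.
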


The truth of Conjecture \ref{conj:CM} would have implications to any ASM obtained as a diagonal block sum of others.  Our next goal is to state and prove that implication, for which we require a couple of lemmas.

The following is a routine exercise, which can be found, for example, in \cite[Chapter 3]{Vil15}.

\begin{lemma}\label{lem:CMness-variables}
Let $S_1 = \mathbb{C}[x_1, \ldots, x_n]$, $S_2 = \mathbb{C}[y_1, \ldots, y_m]$, and $R = \mathbb{C}[x_1, \ldots, x_n,y_1,\ldots,y_m]$.  Suppose that $I_1$ is a proper homogeneous ideal of $S_1$ and that $I_2$ is a proper homogeneous ideal of $S_2$. Then $R/(I_1R+I_2R)$ is Cohen--Macaulay if and only if $S_1/I_1$ and $S_2/I_2$ are both Cohen--Macaulay.  The associated primes of $R/(I_1R+I_2R)$ are exactly those ideals of the form $P_1R+P_2R$ where $P_i$ is an associated prime of $I_i$.  In particular, $R/(I_1R+I_2R)$ is equidimensional if and only if $S_1/I_1$ is and $S_2/I_2$ are both equidimensional.
\end{lemma}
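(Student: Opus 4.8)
The strategy is to recognize $R/(I_1R+I_2R)$ as a tensor product over $\mathbb{C}$ and then apply standard facts about tensor products of finitely generated graded $\mathbb{C}$-algebras. Since the variables of $S_1$ and $S_2$ are disjoint, there is a graded isomorphism $R/(I_1R+I_2R)\cong (S_1/I_1)\otimes_{\mathbb{C}}(S_2/I_2)$; denote this ring by $C$ and write $\mathfrak{m},\mathfrak{m}_1,\mathfrak{m}_2$ for the irrelevant homogeneous maximal ideals of $C$, $S_1/I_1$, $S_2/I_2$. Because a $\mathbb{C}$-basis of $S_2/I_2$ is an $(S_1/I_1)$-module basis of $C$, the map $S_1/I_1\to C$ is faithfully flat, and symmetrically $S_2/I_2\to C$ is faithfully flat. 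Concatenating a homogeneous system of parameters of $S_1/I_1$ with one of $S_2/I_2$ produces a homogeneous system of parameters of $C$, since the resulting quotient of $C$ is a tensor product of two finite-dimensional $\mathbb{C}$-algebras; together with the primeness statement below (and a concatenation of prime chains) this gives $\dim C=\dim(S_1/I_1)+\dim(S_2/I_2)$.

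For the Cohen--Macaulay equivalence I would argue both directions via regular sequences. If $S_1/I_1$ and $S_2/I_2$ are Cohen--Macaulay, pick homogeneous systems of parameters $\theta_1,\dots,\theta_a$ and $\eta_1,\dots,\eta_b$, which are then regular sequences on $S_1/I_1$ and $S_2/I_2$. Flatness of $C$ over $S_1/I_1$ makes $\theta_1,\dots,\theta_a$ a regular sequence on $C$, after which $C/(\theta_1,\dots,\theta_a)C\cong \big((S_1/I_1)/(\theta_1,\dots,\theta_a)\big)\otimes_{\mathbb{C}}(S_2/I_2)$ is flat over $S_2/I_2$, so $\eta_1,\dots,\eta_b$ is regular on it. Thus $\theta_1,\dots,\theta_a,\eta_1,\dots,\eta_b$ is a $C$-regular system of parameters of length $\dim C$, so $C$ is Cohen--Macaulay. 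Conversely, if $C$ is Cohen--Macaulay, take any homogeneous systems of parameters $\theta$ of $S_1/I_1$ and $\eta$ of $S_2/I_2$; their concatenation is a homogeneous system of parameters of $C$, hence a regular sequence, so its initial segment $\theta$ is $C$-regular, and faithfully flat descent along $S_1/I_1\to C$ makes $\theta$ an $(S_1/I_1)$-regular sequence, forcing $\operatorname{depth}(S_1/I_1)=\dim(S_1/I_1)$; the symmetric argument handles $S_2/I_2$. (Alternatively, one can run the whole equivalence through the K\"unneth formula $H^i_{\mathfrak m}(C)\cong\bigoplus_{j+k=i}H^j_{\mathfrak m_1}(S_1/I_1)\otimes_{\mathbb{C}}H^k_{\mathfrak m_2}(S_2/I_2)$, which produces $\operatorname{depth}$ and $\dim$ simultaneously as sums.)

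For the associated primes I would combine two ingredients. First, $P_1R+P_2R$ is prime whenever $P_i$ is a prime of $S_i$: the quotient $(S_1/P_1)\otimes_{\mathbb{C}}(S_2/P_2)$ embeds into $\operatorname{Frac}(S_1/P_1)\otimes_{\mathbb{C}}\operatorname{Frac}(S_2/P_2)$, and a tensor product of two field extensions of an algebraically closed field is a domain. Second, the standard formula for the associated primes of a tensor product of modules over a field (as in \cite[Chapter 3]{Vil15}): $\operatorname{Ass}_C(M\otimes_{\mathbb{C}}N)$ is the union over $\mathfrak{p}\in\operatorname{Ass}(M)$ and $\mathfrak{q}\in\operatorname{Ass}(N)$ of $\operatorname{Ass}_C\!\big(\operatorname{Frac}(S_1/\mathfrak{p})\otimes_{\mathbb{C}}\operatorname{Frac}(S_2/\mathfrak{q})\big)$, and over $\mathbb{C}$ each such ring is a domain, hence contributes only the prime $\mathfrak{p}R+\mathfrak{q}R$. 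Applying this with $M=S_1/I_1$ and $N=S_2/I_2$ gives precisely $\operatorname{Ass}\big(R/(I_1R+I_2R)\big)=\{\,P_1R+P_2R : P_i\in\operatorname{Ass}(S_i/I_i)\,\}$.

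The equidimensionality statement then follows: the minimal primes of $R/(I_1R+I_2R)$ are exactly the $P_1R+P_2R$ with $P_i$ minimal over $I_i$, and the dimension formula gives $\dim R/(P_1R+P_2R)=\dim S_1/P_1+\dim S_2/P_2$; fixing one factor and letting the other range shows this sum is constant over all such pairs if and only if each summand is, i.e.\ if and only if both $S_1/I_1$ and $S_2/I_2$ are equidimensional (using $\operatorname{Ass}(S_i/I_i)\neq\emptyset$, which holds because $I_i$ is proper). I expect the main point needing care to be the associated primes statement: it genuinely invokes the algebraic closedness of $\mathbb{C}$---to know $P_1R+P_2R$ is prime and that no unexpected (e.g.\ embedded) primes appear---and it relies on the general description of $\operatorname{Ass}$ of a tensor product over a field, which should be cited or reproved; the Cohen--Macaulay and dimension claims are then formal consequences of the flatness of $R$ over each $S_i$ together with the behavior of systems of parameters.
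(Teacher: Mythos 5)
The paper does not give a proof of this lemma; it cites it as a routine exercise from Villarreal's book, Chapter 3. Your proof is a correct and careful reconstruction of the standard argument, and it uses exactly the right ingredients: the identification $R/(I_1R+I_2R)\cong(S_1/I_1)\otimes_{\mathbb{C}}(S_2/I_2)$, freeness (hence faithful flatness) of each inclusion $S_i/I_i\hookrightarrow C$, descent and ascent of regular sequences along flat maps, the fact that a tensor product of field extensions of an algebraically closed field is a domain, and the flat-base-change description of associated primes (applying $\operatorname{Ass}_B(M\otimes_A B)=\bigcup_{\mathfrak p\in\operatorname{Ass}_A M}\operatorname{Ass}_B(B/\mathfrak pB)$ twice, once for each tensor factor). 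Your closing observation is also on point: the algebraic closedness of $\mathbb{C}$ is genuinely needed, since over a non-algebraically-closed field $P_1R+P_2R$ need not be prime (e.g.\ $\mathbb{C}\otimes_{\mathbb{R}}\mathbb{C}\cong\mathbb{C}\times\mathbb{C}$), and this is why the paper states the lemma over $\mathbb{C}$ even though it otherwise works over an arbitrary field $\kappa$. One small stylistic note: in the dimension computation you invoke systems of parameters and prime chains together; it is cleanest to first show $\dim C\ge\dim(S_1/I_1)+\dim(S_2/I_2)$ by concatenating saturated prime chains (using the primeness of $P_1R+P_2R$), then observe that the concatenated sequences $\theta,\eta$ cut down to a finite-dimensional quotient, which forces the reverse inequality and identifies $\theta,\eta$ as a homogeneous system of parameters. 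As written the logic is correct but the two halves are slightly interleaved.
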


Recall that the \newword{support} of a monomial ideal $J$, denoted $\supp(J)$, is the set of variables dividing some minimal monomial generator of $J$.  If $J = \init I_w$ for some $w \in S_n$, the $\supp(J)$ is sometimes called the \emph{core} of $w$.

The following lemma is known to experts. We record it below for completeness.

\begin{lemma}\label{lem:antidiagonal-initial-ideal-above-main-antidiagonal}
If $A \in \ASM(n)$, then $\supp(\init I_A) \subseteq \{z_{i,j} \mid i+j \leq n\}$.  
\end{lemma}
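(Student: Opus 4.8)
The plan is to show that every Fulton generator of $I_A$ — and hence every minimal generator of $\init I_A$, up to the usual monomial containments — is supported on variables $z_{i,j}$ lying weakly above the main antidiagonal $i+j \leq n$. Since the Fulton generators form a Gröbner basis under any antidiagonal term order, the antidiagonal initial ideal $\init I_A$ is generated by the antidiagonal terms of these determinants; it then suffices to bound the positions occurring in those antidiagonal terms, and then pass to minimal generators (which can only shrink the support).

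First I would recall that the Fulton generators come from essential cells: for each $(i,j) \in \ess(A)$, we take the $(\rk_A(i,j)+1) \times (\rk_A(i,j)+1)$ minors of $Z_{[i],[j]}$. Fix such an essential cell and set $r = \rk_A(i,j)$. The key combinatorial input is a lower bound on $r$ in terms of $i$ and $j$: I claim $\rk_A(i,j) \geq i + j - n$. This follows from the ASM row- and column-sum conditions. Indeed, $\rk_A(i,j)$ counts (with signs) the entries in the top-left $i \times j$ block; the entries in rows $1,\dots,i$ that lie in columns $j+1,\dots,n$ contribute, row by row, a partial sum of an alternating $\pm 1$ sequence whose total over the whole row is $1$, hence each such row contributes at most... more carefully: $\sum_{b=1}^n A_{a,b} = 1$ for each row $a$, so $\rk_A(i,j) = \sum_{a=1}^i \sum_{b=1}^j A_{a,b} = i - \sum_{a=1}^i \sum_{b=j+1}^n A_{a,b}$. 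Each inner sum $\sum_{b=j+1}^n A_{a,b}$ is a partial sum (from the right) of the alternating sequence in row $a$, and the alternating-sign condition forces every such partial sum to lie in $\{0,1\}$; since there are $n-j$ columns on the right and partial sums are at most... actually the cleanest bound is $\sum_{b=j+1}^n A_{a,b} \leq n-j$ trivially, but we need only $\sum_{a=1}^i \sum_{b=j+1}^n A_{a,b} \leq$ the total positive mass available. The genuinely clean argument: $\rk_A(i,n) = i$ and $\rk_A(i,j) \geq \rk_A(i,n) - (n-j) = i - (n-j) = i+j-n$, using that $\rk_A(i,j')$ decreases by at most $1$ as $j'$ decreases by $1$ (because column partial sums $\sum_{a=1}^i A_{a,j'}$ lie in $\{0,1\}$ — this is exactly the fact noted after the definition of the Rothe diagram). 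So $r \geq i+j-n$, i.e. $r+1 \geq i+j-n+1$.

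Now consider any $(r+1) \times (r+1)$ submatrix $M$ of $Z_{[i],[j]}$, say on rows $i_1 < \dots < i_{r+1} \leq i$ and columns $j_1 < \dots < j_{r+1} \leq j$. Its antidiagonal consists of the entries $z_{i_t, j_{r+2-t}}$ for $t = 1,\dots,r+1$. For such an entry, $i_t \leq i_t$ and $i_t$ is the $t$-th smallest among the chosen rows, so $i_t \leq i - (r+1-t)$; similarly $j_{r+2-t} \leq j - (t-1)$. Adding, $i_t + j_{r+2-t} \leq i + j - (r+1-t) - (t-1) = i+j-r$. Since $r \geq i+j-n$, we get $i_t + j_{r+2-t} \leq i + j - r \leq n$. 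Thus every variable appearing in the antidiagonal term of every Fulton-generator minor lies in $\{z_{a,b} : a+b \leq n\}$, and therefore so does every variable dividing a minimal monomial generator of $\init I_A$. This gives $\supp(\init I_A) \subseteq \{z_{i,j} \mid i+j \leq n\}$.

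**The main obstacle**, such as it is, is getting the inequality $\rk_A(i,j) \geq i+j-n$ cleanly; everything else is elementary bookkeeping on the indices of an antidiagonal in a submatrix sitting inside the top-left $i \times j$ corner. I would present the rank bound via the monotonicity of $\rk_A$ in each argument (each unit step changes the value by $0$ or $1$, as forced by the ASM conditions recorded in \Cref{sec:background}) together with the boundary values $\rk_A(i,n) = i$ and $\rk_A(n,j) = j$, which make the "distance from the SE boundary" argument immediate. One should double-check the edge cases where $\ess(A)$ is empty (then $\init I_A = 0$ and the statement is vacuous) and where a Fulton generator is a single variable $z_{a,b}$ with $(a,b) \in \Dom(A)$ (then $r = 0$, the "submatrix" is $1\times 1$, and the bound reads $a+b \leq n$, which is the $r=0$ case of the inequality above).
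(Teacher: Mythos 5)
Your proof is correct, and it takes a genuinely different route from the paper's. The paper cites Weigandt's result that $I_A = I_{u_1}+\cdots+I_{u_k}$ for suitable $u_1,\ldots,u_k \in S_n$, applies \Cref{prop:initialIdealSplit} to get $\init I_A = \init I_{u_1}+\cdots+\init I_{u_k}$, and thereby reduces to the case $A \in S_n$, which is then dispatched by appealing to the Knutson--Miller pipe dream description of $\init I_w$: a pipe dream for $w \in S_n$ lives strictly above the main antidiagonal. You instead argue directly from the Fulton generators being a Gr\"obner basis: you establish the rank bound $\rk_A(i,j) \geq i+j-n$ using that $\rk_A(i,n) = i$ and that partial column sums $\sum_{a=1}^i A_{a,j'}$ lie in $\{0,1\}$ (the monotonicity noted in \Cref{sec:background}), and then you do the index bookkeeping to show that the antidiagonal of any $(\rk_A(i,j)+1)\times(\rk_A(i,j)+1)$ submatrix of $Z_{[i],[j]}$ sits in $\{z_{a,b} : a+b \leq i+j-\rk_A(i,j) \leq n\}$. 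Your approach is more self-contained and elementary --- it avoids invoking the ASM-as-sum-of-Schuberts decomposition and the pipe dream combinatorics entirely --- and it yields a sharper statement as a byproduct, namely that the lead terms coming from essential cell $(i,j)$ are supported on $\{z_{a,b}: a+b \leq i+j-\rk_A(i,j)\}$. The paper's approach is shorter given the two cited black boxes, and fits the paper's general strategy of leveraging the reduction to Schubert determinantal ideals; either is a perfectly good proof.
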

\begin{proof}
By \cite[Proposition 3.11]{Wei17}, there exist $u_1, \ldots, u_k \in S_n$ such that $I_A = I_{u_1}+\cdots+I_{u_k}$.  By Proposition \ref{prop:initialIdealSplit}, $\init I_A = \init I_{u_1}+\cdots+\init I_{u_k}$.  Hence, it suffices to show that $\supp(\init I_w) \subseteq \{z_{i,j} \mid i+j \leq n\}$ for arbitrary $w \in S_n$.  This follows from the pipe dream description of $\init I_w$ in \cite{KM05}. 
\end{proof}

    \begin{example}\label{ex:core}
Consider $w = 31542$, whose Rothe diagram is below. The cells $(i,j)$ with $z_{i,j} \in \supp(\init I_w)$ are those with an orange line running across their antidiagonal. \[
\begin{tikzpicture}[x=1.5em,y=1.5em]
\draw[step=1,gray, thin] (0,0) grid (5,5);
\draw[color=black, thick](0,0)rectangle(5,5);
\filldraw [black](0.5,3.5)circle(.1);
\filldraw [black](1.5,0.5)circle(.1);
\filldraw [black](2.5,4.5)circle(.1);
\filldraw [black](3.5,1.5)circle(.1);
\filldraw [black](4.5,2.5)circle(.1);

\draw[thick, color=blue] (.5,0)--(.5,3.5)--(5,3.5);
\draw[thick, color=blue] (1.5,0)--(1.5,0.5)--(5,0.5);
\draw[thick, color=blue] (2.5,0)--(2.5,4.5)--(5,4.5);
\draw[thick, color=blue] (3.5,0)--(3.5,1.5)--(5,1.5);
\draw[thick, color=blue] (4.5,0)--(4.5,2.5)--(5,2.5);

\draw[step=1, orange, thin](0,4)--(1,5);
\draw[step=1, orange, thin](1,4)--(2,5);
\draw[step=1, orange, thin](0,2)--(3,5);
\draw[step=1, orange, thin](0,1)--(4,5);
\end{tikzpicture} \qedhere
\]
\end{example}

\begin{thm}\label{prop:codimOfDirectSum}
    (a) Fix $A_1 \in \asm(m)$ and $A_2 \in \asm(n)$, and set $A = A_1 \oplus A_2$.  For $u \in S_m$ and $v \in S_n$, the assignment $(u,v) \mapsto u \oplus v$ gives a bijection between $\Perm(A_1) \times \Perm(A_2)$ and $\Perm(A)$.  In particular, $\codim(X_A) = \codim(X_{A_1})+\codim(X_{A_2})$, and $A$ is equidimensional if and only if $A_1$ and $A_2$ are both equidimensional.

   (b) If $A$ is Cohen--Macaulay, then $A_1$ and $A_2$ are both Cohen--Macaulay.  If Conjecture \ref{conj:CM} is true, then the converse also holds.
\end{thm}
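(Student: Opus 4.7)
I would first compute the rank matrix of $A = A_1 \oplus A_2$: for $(i,j) \in [m+n]^2$, it equals $\rk_{A_1}(i,j)$, $i$, $j$, or $m + \rk_{A_2}(i-m, j-m)$ according as $(i,j)$ lies in the top-left $m \times m$, top-right, bottom-left, or bottom-right block. An identical block decomposition holds for $\rk_{u \oplus v}$ with $u \in S_m$, $v \in S_n$, so cell-by-cell comparison yields $u \oplus v \geq A$ if and only if $u \geq A_1$ and $v \geq A_2$. I would then establish the bijection $(u,v) \mapsto u \oplus v$ between $\Perm(A_1) \times \Perm(A_2)$ and $\Perm(A)$ by mirroring \Cref{prop:permBij}. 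For the forward map: if $A \leq w' \leq u \oplus v$ for some $w' \in S_{m+n}$, the sandwiched equality $\rk_{w'}(m,m) = m$ forces $w'$ to preserve $[m]$ and hence take the form $u' \oplus v'$, after which minimality of $(u,v)$ gives $w' = u \oplus v$. For the reverse map: if $w \in \Perm(A)$ fails to preserve $[m]$, pick the least $a \leq m$ with $w(a) > m$ and the least $a' > m$ with $w(a') \leq m$, and argue as in \Cref{prop:permBij} that $\nu = t_{a,a'} w$ remains $\geq A$ while lying strictly Bruhat-below $w$, contradicting minimality. The codimension formula and equidimensionality statement then follow from $\ell(u \oplus v) = \ell(u) + \ell(v)$ together with \Cref{prop:perm-set-decomposition}.

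\textbf{Part (b), forward direction.} I would apply \Cref{topPermRow} iteratively. Whenever $A_1$ has a ``corner $1$'' at position $(1, k)$---meaning the only nonzero entries in row $1$ and column $k$ of $A_1$ are both the $1$ at $(1,k)$---one can write $A_1 \oplus A_2 = \widetilde{A_1' \oplus A_2}(1, k)$ where $A_1' \in \ASM(m-1)$ is $A_1$ with that row and column deleted, so \Cref{topPermRow} reduces Cohen--Macaulayness to $A_1' \oplus A_2$. Inducting $m$ times yields $A_2$ Cohen--Macaulay, and a symmetric bottom-right version of \Cref{topPermRow} obtained by transposing and reversing yields $A_1$ Cohen--Macaulay. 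The main obstacle is the case where $A_1$'s outer rows or columns all carry $-1$'s blocking such peeling; I expect resolving this case will require a direct-sum generalization of \Cref{topPermRow}, identifying $\Delta(A_i)$ (up to joining with a simplex) as a link of $\Delta(A_1 \oplus A_2)$ by the same Stanley--Reisner technology used in the proof of \Cref{topPermRow}.

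\textbf{Part (b), reverse direction (assuming \Cref{conj:CM}).} Iterating \Cref{conj:CM} yields that $A_2$ Cohen--Macaulay implies $I_m \oplus A_2$ Cohen--Macaulay. The essential cells of $A_1 \oplus A_2$ partition into those of $A_1$ (with rank $\rk_{A_1}$) and those of $A_2$ shifted by $(m,m)$ (with rank $m + \rk_{A_2}$), giving the decomposition $I_{A_1 \oplus A_2} = I_{A_1}^{(Z^{11})} + I_{I_m \oplus A_2}$, where $I_{A_1}^{(Z^{11})}$ lives in the $m^2$ variables of the top-left block. Combining Cohen--Macaulayness of $A_1$ and of $I_m \oplus A_2$ into that of $A_1 \oplus A_2$ is the most delicate step: I would approach it via an adaptation of \Cref{lem:CMness-variables} that leverages the variable-disjointness of $I_{A_1}^{(Z^{11})}$ from the ``other'' variables together with the codimension additivity from part (a), which ensures that the sum of the two ideals has the expected total codimension and that no spurious depth is lost when combining them.
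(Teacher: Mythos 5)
Your part (a) takes a genuinely different route from the paper (direct Bruhat-order combinatorics rather than the paper's passage to initial ideals), and its forward direction is sound, but the surjectivity argument has a real gap. In \Cref{prop:permBij} the swap works because $\rk_{1\oplus A}(\alpha,\beta)\geq 1$ for \emph{every} cell, so raising $\rk_w$ by $1$ on the affected rectangle is harmless; for a general direct sum $A=A_1\oplus A_2$ there is no such lower bound, and the rectangle $[a,a'-1]\times[w(a'),w(a)-1]$ can meet $\Dom(A)$. Concretely, take $A_1=21$, $A_2=12$, so $A=2134$, and $w=3214\geq A$. Then $a=1$, $a'=3$, and your $\nu$ is the identity, which fails $\nu\geq A$ since $\rk_\nu(1,1)=1>0=\rk_A(1,1)$. (Here $w\notin\Perm(A)$, so the statement is not contradicted, but your derivation of ``$\nu\geq A$'' uses nothing beyond $w\geq A$ and the choice of $a,a'$, and that implication is false.) You would need either a different choice of transposition or a genuinely new use of the minimality of $w$; as written, ``argue as in \Cref{prop:permBij}'' does not go through.

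In part (b) you have correctly located the decomposition $I_{A_1\oplus A_2}=I_{A_1\oplus \mathbb{I}_n}+I_{\mathbb{I}_m\oplus A_2}$, but both directions remain incomplete: the peeling via \Cref{topPermRow} fails exactly when column $k$ of $A_1$ carries $-1$s (as you acknowledge), and the ``delicate step'' of combining the two CM statements is left open because the two summands do \emph{not} involve disjoint variables (the Fulton generators of $I_{\mathbb{I}_m\oplus A_2}$ are minors reaching into the northwest block), so \Cref{lem:CMness-variables} does not apply directly. The missing idea, which is the crux of the paper's proof and resolves both parts at once, is to pass to antidiagonal initial ideals: by \Cref{prop:initialIdealSplit}, $\init I_A=\init I_{A_1\oplus\mathbb{I}_n}+\init I_{\mathbb{I}_m\oplus A_2}$, and by \Cref{lem:antidiagonal-initial-ideal-above-main-antidiagonal} the first summand is supported on $\{z_{i,j}\mid i+j\leq m\}$ and the second on $\{z_{i,j}\mid i+j>m+1\}$, so the supports \emph{are} disjoint after degenerating. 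Then \Cref{lem:CMness-variables} applies verbatim, \Cref{thm: initIdealCM} (Conca--Varbaro) transfers Cohen--Macaulayness back to $I_A$, and one is reduced to $I_{A_1\oplus\mathbb{I}_n}=I_{A_1}R$ together with \Cref{topPermRow} (iterated $m$ times on $\mathbb{I}_m\oplus A_2$, where the inserted row--column pairs really are clean) and, for the converse, \Cref{conj:CM}. The same disjoint-support observation, combined with \Cref{prop:components-give-reduced-words}, identifies the associated primes of $\init I_A$ with pairs of associated primes and yields the bijection of part (a) without any Bruhat-order case analysis.
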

\begin{proof}
Consider $R = \mathbb{C}[z_{i,j} \mid i, j \in [m+n]]$ as the ambient polynomial ring of $I_A$.  Consider the subrings $S_1 = \mathbb{C}[z_{i,j} \mid i+j \leq m]$ and $S_2 = \mathbb{C}[z_{i,j} \mid i+j > m]$ of $R$.  Then $R = S_1 \otimes_\mathbb{C} S_2$.

    For $k \geq 1$, let $\mathbb{I}_k$ denote the $k \times k$ identity matrix, and observe that \[
    I_A = I_{A_1 \oplus \mathbb{I}_n}+I_{\mathbb{I}_m \oplus A_2}.
    \]  By Proposition \ref{prop:initialIdealSplit}, \[
\init I_A = \init I_{A_1 \oplus \mathbb{I}_n}+ \init I_{\mathbb{I}_m \oplus A_2}.
    \] By Lemma \ref{lem:antidiagonal-initial-ideal-above-main-antidiagonal}, $\init I_{A_1+\mathbb{I}_n}$ is supported only on variables $z_{i,j}$ with $i+j\leq m$.  Note that $\init I_{\mathbb{I}_m \oplus A_2}$ is supported only on $z_{i,j}$ with $i+j>m+1$.  Specifically, $\init I_{A_1 \oplus \mathbb{I}_n}$ and $\init I_{\mathbb{I}_m \oplus A_2}$ are supported on disjoint sets of variables; the former on a subset of the variables of $S_1$ and the latter on a subset of the variables of $S_2$.  

     (a) By Lemma \ref{lem:CMness-variables}, the associated primes $P$ of $\init I_A$ are exactly those ideals of the form $Q_1+Q_2$ where $Q_1$ is an associated prime of $\init I_{A_1 \oplus \mathbb{I}_n}$ and $Q_2$ is an associated prime of $\init I_{\mathbb{I}_m \oplus A_2}$.  
    
    Combining \Cref{prop:initialIdealSplit} and \Cref{prop:components-give-reduced-words}, the associated primes of $\init I_{A_1 \oplus \mathbb{I}_m}$ are exactly those ideals of the form $\init I_{u'}$ for some $u' \in \Perm(A_1 \oplus \mathbb{I}_n)$, and the associated primes of $\init I_{\mathbb{I}_m \oplus A_2}$ are exactly those of the form $\init I_{v'}$ for some $v' \in \Perm(\mathbb{I}_m \oplus A_2)$.  Because $I_{A_1 \oplus \mathbb{I}_n} = I_{A_1}R$, those permutations $u'$ are exactly those of the form $u \oplus \mathbb{I}_m$ for some $u \in \Perm(A_1)$.  By Proposition \ref{prop:permBij}, those permutations $v'$ are exactly those of the form $\mathbb{I}_m \oplus v$ for some $v \in \Perm(A_2)$. Then $(u,v) \in \Perm(A_1) \times \Perm(A_2)$ if and only if $I_{u \oplus v} = I_{u \oplus \mathbb{I}_n}+I_{\mathbb{I}_m \oplus v}$ is an associated prime of $I_A$, which is equivalent to $u \oplus v \in \Perm(A)$. This completes the proof of the bijection between $\Perm(A_1) \times \Perm(A_2)$ and $\Perm(A)$. 

    Because, for any $u \in S_m$ and $v \in S_n$, $\ell(u \oplus v) = \ell(u)+\ell(v)$, the codimension and equidimensionality claims now follow from Proposition \ref{prop:perm-set-decomposition}.
    
    (b) We turn to the Cohen--Macaulayness claim.  By Theorem \ref{thm: initIdealCM}, $I_A$, $I_{A_1 \oplus \mathbb{I}_n}$, and $I_{\mathbb{I}_m\oplus A_2}$ define Cohen--Macaulay quotient rings if and only if $\init I_A$,  $\init I_{A_1 \oplus \mathbb{I}_n}$, and $\init I_{\mathbb{I}_m\oplus A_2}$ do, respectively.  By Proposition \ref{lem:CMness-variables}, $\init I_{A_1 \oplus \mathbb{I}_n}+\init I_{\mathbb{I}_m\oplus A_2}$ defines a Cohen--Macaulay quotient if and only if both $\init I_{A_1 \oplus \mathbb{I}_n}$ and $\init I_{\mathbb{I}_m\oplus A_2}$ do.  Because $I_{A_1 \oplus \mathbb{I}_n} = I_{A_1}R$, $I_{A_1 \oplus \mathbb{I}_n}$ and $I_{A_1}$ define Cohen--Macaulay quotients or not alike.  By \Cref{topPermRow}, if $I_{\mathbb{I}_m\oplus A_2}$ defines a Cohen--Macaulay quotient ring, then $I_{A_2}$ does.  If Conjecture \ref{conj:CM} is true, the converse is true as well.
\end{proof}

In Proposition \ref{prop:codimOfDirectSum}, one may alternatively establish the codimension and equidimensionality claims by appealing to Lemma \ref{lem:CMness-variables} together with the fact that codimension does not change under Gr\"obner degeneration (see, e.g., \cite[Chapter 15]{Eis95}).

\begin{prop}
    If $A$ admits a block-matrix decomposition of the form
\[A = \left(\begin{array}{@{}c|c@{}}
0 & A_1\\\hline
A_2 & 0\\
  \end{array}\right),
\]
where both $A_1 \in \asm(m)$ and $A_2 \in \asm(n)$ are ASMs, then $A$ is unmixed (respectively, Cohen--Macaulay) if and only if $A_1$ and $A_2$ are both unmixed (respectively, Cohen--Macaulay).
\end{prop}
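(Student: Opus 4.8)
The plan is to prove the stronger statement that, unlike the ordinary direct sum treated in \Cref{prop:codimOfDirectSum}(b), the block-antidiagonal $A$ has an ideal $I_A$ that \emph{itself} splits as a sum of ideals on pairwise disjoint sets of variables; the whole proposition --- including the Cohen--Macaulay converse --- then follows unconditionally from \Cref{lem:CMness-variables}, with no appeal to \Cref{conj:CM}.

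First I would fix coordinates so that $A \in \asm(m+n)$ has $A_1$ in rows $[m]$ and columns $[n+1,n+m]$ and $A_2$ in rows $[m+1,m+n]$ and columns $[n]$, and record the block form of the rank function:
\[
\rk_A(i,j) = \begin{cases} 0 & \text{if } (i,j)\in[m]\times[n],\\ \rk_{A_1}(i,\,j-n) & \text{if } i\in[m],\ j\in[n+1,\,n+m],\\ \rk_{A_2}(i-m,\,j) & \text{if } i\in[m+1,\,m+n],\ j\in[n],\\ (i-m)+(j-n) & \text{if } i\in[m+1,\,m+n],\ j\in[n+1,\,n+m], \end{cases}
\]
where the bottom-right case uses $\rk_{A_1}(m,\ell)=\ell$ and $\rk_{A_2}(k,n)=k$ (the full row and column sums of an ASM are $1$). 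Set $\mathfrak m = (z_{i,j}\mid (i,j)\in[m]\times[n])$, let $J_1$ be the image of $I_{A_1}$ under the relabeling $z_{a,b}\mapsto z_{a,\,n+b}$ of generic matrices (so $J_1$ is supported on $\{z_{i,j}\mid i\in[m],\ j\in[n+1,n+m]\}$), and let $J_2$ be the image of $I_{A_2}$ under $z_{a,b}\mapsto z_{m+a,\,b}$ (supported on $\{z_{i,j}\mid i\in[m+1,m+n],\ j\in[n]\}$).

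The crux is the ideal identity $I_A = \mathfrak m + J_1 + J_2$. The inclusion $\supseteq$ is clear: the variables of $\mathfrak m$ are $1\times 1$ minors from rank-$0$ conditions, and the generating minors of $J_1$ and $J_2$ are also minors of northwest submatrices of $Z$ at which the rank threshold imposed by $I_A$ matches, via the display, the one imposed by $I_{A_1}$ (resp.\ $I_{A_2}$). For $\subseteq$, I would show each natural generator $I_{\rk_A(i,j)+1}(Z_{[i],[j]})$ lies in $\mathfrak m + J_1 + J_2$ by cases on the block of $(i,j)$: on the top-left block the generators are variables of $\mathfrak m$; for $i\in[m]$, $j\in[n+1,n+m]$, cofactor expansion of a minor of $Z_{[i],[j]}$ along any of its first $n$ columns lands in $\mathfrak m$, while a minor supported on columns $[n+1,j]$ is, after the shift, a minor of the relevant northwest submatrix of $I_{A_1}$ of the right size, hence in $J_1$ (and symmetrically with $J_2$ for $i\in[m+1,m+n]$, $j\in[n]$); and for $i\in[m+1,m+n]$, $j\in[n+1,n+m]$ every $(\rk_A(i,j)+1)$-minor of $Z_{[i],[j]}$ already lies in $\mathfrak m$, because if $P$ and $Q$ are its row and column sets then the block on $(P\cap[m])\times(Q\cap[n])$ consists of $\mathfrak m$-variables and has $|P\cap[m]|+|Q\cap[n]|\ge \rk_A(i,j)+2 > \rk_A(i,j)+1 = |P|$, so modulo $\mathfrak m$ the minor has a zero block too large for its determinant to be nonzero (an $N\times N$ matrix with a $p\times q$ zero block and $p+q>N$ has determinant $0$).

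Given the identity, note that $\mathfrak m$, $J_1$, $J_2$ are supported on pairwise disjoint variable sets and that the $mn$ variables $z_{i,j}$ with $i\in[m+1,m+n]$ and $j\in[n+1,n+m]$ occur in none of them. Thus $R/I_A$ is isomorphic, as a $\kappa$-algebra, to $(R'/I_{A_1})\otimes_\kappa(R''/I_{A_2})\otimes_\kappa\kappa[w_1,\dots,w_{mn}]$, where $R', R''$ are the generic-matrix polynomial rings of $I_{A_1}, I_{A_2}$ and the $w_t$ are the remaining variables. Since $I_A$ is radical, \emph{unmixed} coincides with \emph{equidimensional} here; a polynomial extension preserves both Cohen--Macaulayness and equidimensionality; and \Cref{lem:CMness-variables} (used twice) says a tensor product over $\kappa$ is Cohen--Macaulay, resp.\ equidimensional, if and only if each factor is. Hence $A$ is Cohen--Macaulay, resp.\ unmixed, if and only if $A_1$ and $A_2$ both are. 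I expect the only real work to be the verification of $I_A=\mathfrak m+J_1+J_2$, particularly the bottom-right block via the zero-block determinant fact; the rest is bookkeeping. One could alternatively set up a bijection $\Perm(A)\leftrightarrow\Perm(A_1)\times\Perm(A_2)$ in the style of \Cref{prop:codimOfDirectSum}(a), which yields the codimension and the unmixed case, but not Cohen--Macaulayness --- which is exactly what the variable decoupling above supplies.
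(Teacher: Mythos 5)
Your proof is correct and takes the same approach as the paper: both rest on the observation that, for a suitable indexing of the generic matrix, $I_A = \mathfrak{m} + J_1 + J_2$ (the paper's $J + I_{A_1} + I_{A_2}$), where the three summands are supported on pairwise disjoint sets of variables, after which \Cref{lem:CMness-variables} delivers both the unmixed and Cohen--Macaulay equivalences simultaneously. The paper records this ideal decomposition without justification, whereas you supply the verification --- in particular the zero-block determinant argument disposing of minors drawn from the southeast block, which is the only non-obvious inclusion --- and you correctly note that, in contrast with \Cref{prop:codimOfDirectSum}(b), the Cohen--Macaulay converse here is unconditional and requires no appeal to \Cref{conj:CM}.
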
 
\begin{proof}
    Let $J = (z_{i,j} \mid i \in [m], j \in [n])$.  For a suitable choice of indexing of the variables, $I_A = J+I_{A_1}+I_{A_2}$, where $J$, $I_{A_1}$, and $I_{A_2}$ involve pairwise disjoint sets of variables.  Hence the result follows from \Cref{lem:CMness-variables}.
\end{proof}

In the next section, we will see through \Cref{containment-restrictions} that the special case of a block sum decomposition is not quite so specialized as it may seem in the sense that ASMs do not easily contain one another.

\section{Pattern avoidance for ASMs}

In this section, we will consider a natural extension of the notion of pattern avoidance in $S_n$ to $\asm(n)$. Permutation pattern avoidance, its own area of study within combinatorics, has been shown to govern desirable algebro-geometric properties within Schubert calculus (see, e.g., \cite{WY06, KMY09, Kle23} and quite importantly \cite{LS97} together with a survey of its consequences \cite{AB16}).  Properties that are governed by pattern avoidance occur asymptotically with probability $0$.  As we saw in \Cref{KM-vert-decomp-table}, the property of being Knutson--Miller vertex decomposable becomes increasingly rare.  So, too, it turns out does Cohen--Macaulayness, which we see now in \Cref{fig:CMData}.

\begin{figure}[h]
\begin{center}
\begin{tabular}{|c|c|c|c|} 
 \hline
 $n$ & \# CM ASMs & \# not CM ASMs & Percent CM\\ 
 \hline
 4 & 39 & 3 & 92.9\% \\
 \hline
 5 & 328 & 101 & 76.5\%\\
 \hline
 6 & 4028 & 3408 & 54.2\% \\
 \hline
  7 & 70,194 & 148,154 & 32.1\%\\
 \hline
\end{tabular}
\caption{Counts of Cohen--Macaulay and non-Cohen--Macaulay elements of $\asm(n)$.}
\label{fig:CMData}
\end{center}
\end{figure}

It is for this reason that we are motivated to consider pattern avoidance in $\asm(n)$.

\begin{definition}\label{defn:contains} Let $A$ and $A'$ be ASMs.  If $A'$ is a submatrix of $A$, then we will say that $A$ \newword{contains} $A'$.  Otherwise, we will say that $A$ \newword{avoids} $A'$.  
\end{definition}

These definitions coincide with the usual uses of ``contains" and ``avoids" in the sense of permutation pattern avoidance when $A$ and $A'$ are permutation matrices.

We will first use an example to show that, perhaps surprisingly, this extension of pattern avoidance to $\asm(n)$ given above does \emph{not} govern Cohen--Macaulayness of ASM varieties.  We will then prove that, nevertheless, there is still some behavior that is correctly understood via pattern avoidance.  Specifically, we will describe configurations whose containment is an obstruction to unmixedness.  Our goal in presenting this information is to provide motivation and context for future work to consider other extensions of pattern avoidance from $S_n$ to $\asm(n)$ that could do more work to capture the phenomena documented here, perhaps those beginning from a more geometric perspective as in \cite{BB03} or \cite{BS98}.

There are other notions of pattern avoidance for ASMs that have emerged in the literature on enumerative combinatorics (e.g., \cite{JL07, ACG11, BSS25}).  To the best of the authors' knowledge, these have not been shown to have algebro-geometric interpretations.  

We first consider the relationship between Cohen--Macaulayness and ASM pattern containment.  We have previously seen, in \Cref{ex:permBij} and \Cref{ex:widetilde{A}}, that a non-Cohen--Macaulay ASM can contain a Cohen--Macaulay ASM even when the larger ASM is obtained via the construction $\widetilde{A}(i,j)$.  What is more surprising is that, as \Cref{ex:CM-contains-non} shows, a Cohen--Macaulay ASM may contain a non-Cohen--Macaulay ASM.

\begin{example}\label{ex:CM-contains-non}
    Let $A = \begin{pmatrix} 
    0 & 0 & {\color{blue}0} & 1 & 0 & 0\\
    0 & 0 & {\color{blue}0} & 0 & 1 & 0\\
    0 & 1 & {\color{blue}0} & 0 & -1 & 1\\
    {\color{blue}0} & {\color{blue}0} & {\color{blue}1} & {\color{blue}0} & {\color{blue}0} & {\color{blue}0}\\
    1 & 0 & {\color{blue}0} & -1 & 1 & 0\\
    0 & 0 & {\color{blue}0} & 1 & 0 & 0
   \end{pmatrix} $ and $B = \begin{pmatrix} 
    0 & 0  & 1 & 0 & 0\\
    0 & 0  & 0 & 1 & 0\\
    0 & 1  & 0 & -1 & 1\\
    1 & 0  & -1 & 1 & 0\\
    0 & 0  & 1 & 0 & 0
   \end{pmatrix} $.  Then $A$ is Cohen--Macaulay while $B$ is not even unmixed, even though $B$ is obtained from $A$ by deletion of row $4$ and column $3$.  That is, $A = \widetilde{B}(4,3)$.  Indeed, $\Perm(B) = \{45213, 34512, 35241\}$, whose first and third elements have length $7$ while the second has length $6$. In contrast to the situation of Proposition \ref{topPermRow}, some elements of $\Perm(A) = \{562314,462513,456213,462351\}$ fail to have a $1$ in row $4$ and column $3$, for example $462513$.  Moreover, $|\Perm(A)| = 4 \neq 3 = |\Perm(3)|$.
\end{example}

We provide one more piece of evidence of a shortcoming of this notion of pattern avoidance.  We will show that the conditions implied by the containment of one ASM in another are rather restrictive.  From that standpoint, one may worry that ASMs do not contain each other frequently enough to do the heavy lifting of recording obstructions to important algebro-geometric properties that become increasingly rare.

\begin{prop}
\label{containment-restrictions}
Suppose $A \in \ASM(n)$ has a submatrix $A' \in \ASM(n-k)$.  Let $W = \{r_1,\dots, r_k\}$ be the ordered set of indices of rows of $A$ that do not intersect $A'$ and $C = \{c_1, \dots, c_k\}$ be the ordered set of indices of columns of $A$ that do not intersect $A'$. Then the following hold:

\begin{enumerate}
    \item For all $i \in [k]$, $A_{r_i, c}=0$ if $c < c_1$ or $c>c_k$ and $A_{r,c_i}=0$ if $r < r_1$ or $r > r_k$.
    \item $\sum_{r \in W, c \in C} A_{r,c} = k$. In particular, there are at least $k$ pairs $(r_i,c_j)$ such that $A_{r_i,c_j} = 1$.
\end{enumerate}
\end{prop}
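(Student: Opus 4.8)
The plan is to exploit the structural constraints forced by $A'$ being an ASM sitting inside $A$ as a submatrix. I would set up notation so that the rows of $A$ split as $\{1,\dots,n\} = W \sqcup W'$ where $W'$ indexes the rows meeting $A'$, and similarly columns split as $C \sqcup C'$. Deleting the $W$-rows and $C$-columns from $A$ leaves exactly $A'$, which by hypothesis is an ASM. The key observation driving everything is a \emph{row-sum/column-sum accounting}: each row of $A$ sums to $1$, each column sums to $1$, each row of $A'$ sums to $1$, and each column of $A'$ sums to $1$, and these facts constrain where the nonzero entries of $A$ outside $A'$ can live.

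For part (2), I would argue by double counting. Consider the total sum $\sum_{(r,c)} A_{r,c} = n$ over all of $A$. Split this into four blocks: the $W'\times C'$ block (which is $A'$, summing to $n-k$), the $W\times C'$ block, the $W'\times C$ block, and the $W\times C$ block. Now compute the sum of the $W$-rows of $A$: since $|W| = k$ and each row sums to $1$, the total over the $W$-rows is $k$; this total equals (sum over $W\times C'$) + (sum over $W\times C$). Similarly the sum over the $C$-columns is $k = $ (sum over $W'\times C$) + (sum over $W\times C$). Adding these and subtracting appropriately — or more directly, computing $n = (n-k) + (\text{sum over } W\times C') + (\text{sum over } W'\times C) + (\text{sum over } W\times C)$ and combining with the two one-line-sum identities — I should be able to solve for (sum over $W\times C$) $= k$. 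The ``in particular'' clause follows because entries are in $\{-1,0,1\}$ and the alternating condition means the contribution of $1$'s minus the contribution of $-1$'s is $k$, so there are at least $k$ entries equal to $1$ among the $W\times C$ positions.

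For part (1), I would use the alternating-sign condition together with the position of the columns $c_1 < \dots < c_k$ and rows $r_1 < \dots < r_k$. Fix $i\in[k]$ and a column index $c < c_1$. Row $r_i$ of $A$ restricted to columns $C'$ forms (a row of) the ASM $A'$ up to the column ordering, so its partial sums along $A'$-columns lie in $\{0,1\}$ and it sums to $1$; in particular the partial sum of row $r_i$ over all columns with $A'$-index $\le$ anything, read left to right, stays in $\{0,1\}$. If $A_{r_i,c}\ne 0$ for some $c<c_1$, then since all of $C'$-columns lie among the columns to the right of $c$ (as $c < c_1 \le$ every column index, whether in $C$ or... ) — more carefully, I would track the running partial sum $\sum_{b\le t} A_{r_i,b}$ across all columns $t$ of $A$ and show that a nonzero entry strictly to the left of column $c_1$ forces the running sum to reach a value, and then the constraint that the $A'$-part of the row already accounts for the full sum $1$ with partial sums in $\{0,1\}$, forces a violation of the alternating condition in row $r_i$ of $A$. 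The symmetric argument handles $c > c_k$ and the column statements handle rows $r < r_1$ and $r > r_k$.

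The main obstacle I anticipate is part (1): making the partial-sum bookkeeping rigorous when the $C$-columns are interleaved among the $C'$-columns. The cleanest route is probably to note that $c_1$ is the smallest index in $C$, but there may be $C'$-columns with index less than $c_1$, so ``$c < c_1$'' really does restrict to a prefix of columns that is entirely $C'$-columns (since any column with index $< c_1$ cannot be in $C$). Then row $r_i$ restricted to columns $1,\dots,c_1-1$ is a prefix of the corresponding row of $A'$, so its partial sums lie in $\{0,1\}$; if $A_{r_i,c}\ne 0$ with $c < c_1$ we get that this prefix already contains a $\pm 1$, and I then need the fact that the portion of row $r_i$ in $A'$ sums to $1$ with the alternating pattern, to force either a $-1$ appearing as the leftmost nonzero of the row (impossible, a row of an ASM starts with $+1$ or is nonneg prefix) or a double $+1$ without an intervening $-1$. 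Spelling out exactly which of these contradictions arises, in all the sub-cases of the sign of $A_{r_i,c}$, is the fiddly part, but it is entirely elementary once the prefix observation is in hand.
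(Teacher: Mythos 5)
Your argument for part (2) is fine and is essentially the paper's: decompose the total sum of $A$ into the four blocks $W'\times C'$, $W\times C'$, $W'\times C$, $W\times C$ and exploit the ASM row and column sums. The paper shows the two cross blocks each sum to $0$ directly, by comparing column sums of $A$ to column sums of $A'$ and likewise for rows; your version solves a three-equation linear system for the same quantities, which works equally well. For the ``in particular'' clause, note that it follows merely because the entries lie in $\{-1,0,1\}$, so that the number of $1$s minus the number of $-1$s in the $W\times C$ block equals $k$; the alternating condition plays no role there.

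Your argument for part (1), however, rests on a false premise. You write that row $r_i$ of $A$ restricted to the columns $C'$ ``forms (a row of) the ASM $A'$'' and refer repeatedly to the ``$A'$-part of the row.'' But $r_i \in W$ is by definition a row that does \emph{not} intersect $A'$: the entries $(A_{r_i,c'})_{c'\in C'}$ are not entries of $A'$, and they need not sum to $1$ nor obey any $A'$ prefix-sum constraints. So the violation of the alternating condition you hope for does not materialize. The correct move, which you almost reach with your observation that every column index $c < c_1$ lies in $C'$, is to look at the \emph{column} $c$ rather than the row $r_i$: since $c\in C'$, column $c$ of $A'$ sums to $1$ (because $A'$ is an ASM) and column $c$ of $A$ also sums to $1$, so the $W$-row entries in column $c$ sum to $0$. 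Choosing $c<c_1$ minimal such that some $A_{r_i,c}\neq 0$ (with the minimum taken over all deleted rows simultaneously) makes $A_{r_i,c}$ the first nonzero entry of row $r_i$, hence $A_{r_i,c}=1$; the zero column-sum over $W$-rows then forces some $A_{r_j,c}=-1$, but that entry would also be the first nonzero entry of row $r_j$ and hence must equal $1$, a contradiction. The key ingredient you were missing is this column-sum comparison between $A$ and $A'$, together with taking the minimality over all of $W$ at once rather than working within a single row.
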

\begin{proof}
Throughout this proof, for a visualization of the various regions discussed, see \Cref{fig-for-containment-restrictions}.

(1) Consider first the case $c < c_1$. For the sake of contradiction, suppose there exists some $i \in [k]$ and $c<c_1$ with $A_{r_i,c} \neq 0$.  Assume that $c$ has been chosen minimally. Because the first nonzero entry in each row of an ASM must be $1$, $A_{r_i,c} = 1$. The sum of the entries in column $c$ of $A$ is $1$, and also the sum of the entries in column $c$ of $A'$ is $1$ because both $A$ and $A'$ are ASMs. Thus, $\sum_{j \in [k]} A_{r_j, c} = 0$, and so there must be some $A_{r_j,c} = -1$.  But, by minimality of $c$, $A_{r_j,c}$ must be the first nonzero entry in row $r_j$ and therefore cannot be $-1$.

The arguments for the other cases of (1) are symmetric, and so we omit them.

(2) Because $A'$ is an ASM, $\sum_{j \in [k]} A_{r_j,c} = 0$ for each $c \notin C$ and $\sum_{i \in [k]} A_{r,c_i} = 0$ for each $r \notin W$. Note also that, because $A \in \asm(n)$ and $A' \in \asm(n-k)$, we have, respectively, \[
n = \sum_{r \in [n], c \in [n] } A_{r,c} \mbox{ and } n-k = \sum_{r \notin R, c \notin C } A_{r,c}.
\]  Hence,

\begin{align*}
n = \sum_{{r \in [n], c \in [n] }} A_{r,c} &= \sum_{r \notin W, c \notin C } A_{r,c} + \sum_{r \in W, c \notin C } A_{r,c} + \sum_{ r \notin W, c \in C } A_{r,c} + \sum_{r \in W, c \in C } A_{r,c}\\
 &= (n-k)+0+0+\sum_{r \in W, c \in C } A_{r,c},
\end{align*} from which the result follows.
\end{proof}

\begin{figure}[h]
\begin{center}
    \begin{tikzpicture}
        \fill[pink!70] (0,0) rectangle (4,4);
        
       % horizontal yellow strips and column labels
        \fill[yellow!70] (0,1.5) rectangle (4,1.75);
        \fill[yellow!70] (0,1) rectangle (4,1.25);
          \fill[yellow!70] (0,2.75) rectangle (4,3);
        \node at (1.15,4.25){$c_1$};
        \node at (2.15,4.25){$c_2$};
        \node at (3.35,4.25){$c_3$};

        %vertical yellow strips and row labels
        \fill[yellow!70] (3.25,0) rectangle (3.5,4);
        \fill[yellow!70] (2,0) rectangle (2.25,4);
        \fill[yellow!70] (1,0) rectangle (1.25,4); 
         \node at (4.25, 1.15){$r_3$};
        \node at (4.25, 1.65){$r_2$};
        \node at (4.25, 2.85){$r_1$};
        
        \draw [thick] (0,0) rectangle (4,4);
        \draw [thick] (1,3) rectangle (3.5,1);

        \node at (1.15,2.3){\textcolor{green}{$\ast$}};
        \node at (2.15,2.3){\textcolor{green}{$\ast$}};
        \node at (3.35,2.3){\textcolor{green}{$\ast$}};
        %stars first column
        \node at (1.15,1.15){$\star$};
        \node at (1.15,1.65){$\star$};
        \node at (1.15,2.85){$\star$};
        %stars second column
        \node at (2.15,1.15){$\star$};
        \node at (2.15,1.65){$\star$};
        \node at (2.15,2.85){$\star$};
        %stars third column
        \node at (3.35,1.15){$\star$};
        \node at (3.35,1.65){$\star$};
        \node at (3.35,2.85){$\star$};
        \node at (-0.5,2) {\Large $A= $ \hspace{3cm}};
        
    \end{tikzpicture}
\end{center}
\caption{A visualization of the various regions discussed in \Cref{containment-restrictions} with $k=3$. The matrix $A'$ is the submatrix of $A$ consisting of its entries in the pink regions.  The rows and columns that are removed from $A$ to obtain $A'$ are shaded in yellow.  The entries in the yellow strips outside of the inner box are all $0$.  The sum, for example, of the three entries marked \textcolor{green}{$\ast$} is $0 = \sum_{j \in [3]} A_{r,c_j}$ for some $r \notin W$, i.e., for some non-yellow row $r$. The sum of the nine entries marked with a $\star$ is $\sum_{r \in W, c \in C} A_{r,c} = k=3$.}
\label{fig-for-containment-restrictions}
\end{figure}

\begin{cor}
    If $A' \in \asm(n-k)$ embeds in $A \in \asm(n)$ as a block in the northwest corner, i.e. $A = \left(\begin{array}{@{}c|c@{}}
    A' & * \\\hline
    *  & * 
  \end{array}\right)$, then $A = A' \oplus B$ for some $B \in \asm(k)$. 
\end{cor}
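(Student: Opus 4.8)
The plan is to apply \Cref{containment-restrictions} with the submatrix $A'$ occupying the northwest corner of $A$, so that the removed rows are $W = \{n-k+1, \ldots, n\}$ and the removed columns are $C = \{n-k+1, \ldots, n\}$, i.e.\ $r_i = c_i = n-k+i$ for $i \in [k]$. First I would invoke part (1) of \Cref{containment-restrictions}: since $c_1 = n-k+1$ and $c_k = n$, the condition ``$c < c_1$ or $c > c_k$'' forces every entry $A_{r_i, c}$ with $c \leq n-k$ to vanish. In other words, the entire southwest block (rows in $W$, columns in $[n-k]$) is zero. Symmetrically, the condition on $A_{r, c_i}$ forces the entire northeast block (rows in $[n-k]$, columns in $C$) to be zero. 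Hence $A$ already has the block-diagonal shape
\[
A = \left(\begin{array}{@{}c|c@{}} A' & 0 \\\hline 0 & B \end{array}\right),
\]
where $B$ is the $k \times k$ submatrix of $A$ on rows and columns indexed by $W = C = \{n-k+1, \ldots, n\}$.

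It remains to check that $B \in \asm(k)$. The entries of $B$ lie in $\{-1,0,1\}$ since they are entries of $A$. For the row and column sum and sign-alternation conditions: fix a row $r \in W$ of $A$. Because the northeast-adjacent portion of that row (the entries $A_{r,c}$ with $c \in [n-k]$) is zero by the block structure we just established — wait, more carefully, row $r \in W$ has its entries in columns $[n-k]$ equal to zero by part (1) applied to the removed \emph{columns} condition, so the nonzero entries of row $r$ all lie in columns $C$, and thus the full row sum of $A$ (which is $1$) equals the sum of the corresponding row of $B$; likewise the alternating-sign pattern of that row of $A$ is exactly the alternating-sign pattern of that row of $B$. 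The same argument applied to columns shows each column of $B$ sums to $1$ with alternating nonzero entries. Therefore $B$ is an alternating sign matrix, and $A = A' \oplus B$ as claimed.

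The argument is essentially immediate once part (1) of \Cref{containment-restrictions} is in hand, so there is no real obstacle; the only point requiring a moment's care is confirming that the row/column sum and alternation conditions for $B$ follow from those for $A$, which works precisely because part (1) guarantees that rows in $W$ and columns in $C$ have no nonzero entries outside the southeast block. One could alternatively derive the block-diagonal shape directly from part (2): since $\sum_{r \in W, c \in C} A_{r,c} = k$ and each of the $k$ rows indexed by $W$ contributes total sum $1$ across \emph{all} of $A$, while part (1) kills the outer strips, every row of $W$ must have its full unit sum concentrated in columns $C$; but invoking part (1) directly is the cleaner route.
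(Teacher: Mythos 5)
Your proof is correct and takes essentially the same route as the paper: both apply part (1) of \Cref{containment-restrictions} with $W = C = [n-k+1,n]$ to kill the off-diagonal blocks (which works cleanly here because $W$ and $C$ are intervals, so no entries fall strictly between removed columns), and then verify the row/column sum and alternation axioms for $B$ directly from those of $A$.
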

\begin{proof}
    We use \Cref{containment-restrictions}, where $W = C = [n-k+1,n]$, to see that $A = A' \oplus B$ for some $k \times k$ matrix $B$ with entries in $\{0, 1, -1\}$.  From the direct sum decomposition, we see that all nonzero entries of $A$ in each row (resp., column) $i \in [n-k+1,n]$ occur in columns (resp., rows) $[n-k+1,n]$.  Hence, the nonzero entries of each row (resp., column) of $B$ alternate in sign and sum to $1$.  Therefore, $B \in \asm(k)$.
\end{proof}

Although the notion of pattern avoidance given in \Cref{defn:contains} has significant limitations, it should not be entirely ignored.  As an example of its capacity to encode some valuable information, we will show in \Cref{prop:not-equidimensional} that containing an ASM belonging to a particular family is adequate to prevent unmixedness.  We present this as evidence that pattern avoidance should not be entirely abandoned as a means to understand the algebra and geoemtry of ASM varieties.  We begin with a lemma.

\begin{lemma}
\label{lem:findminprimes}
Let $I$ be a squarefree monomial ideal, and let $S$ be a subset of the support of $I$. Let $P = (S)$. Then $P$ is a minimal prime of $I$ if and only if both of the following two conditions hold:

\begin{enumerate}
    \item [(i)] For each monomial $\mu \in I$, at least one element of the support of $\mu$ is an element of $S$.
    
    \item [(ii)] For every $x \in S$, there exists a monomial $\nu \in I$ so that $x$ is in the support of $\nu$ and $x$ is the only element of $S$ in the support of $\nu$.
\end{enumerate}
\end{lemma}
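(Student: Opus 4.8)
The plan is to reduce everything to the standard fact that every minimal prime of a monomial ideal is generated by a subset of the variables, and then to read conditions (i) and (ii) off as the two clauses ``$I \subseteq P$'' and ``$P$ is minimal among coordinate primes containing $I$.''

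First I would translate (i). Since $P = (S)$ is generated by variables, a monomial $\mu$ lies in $P$ precisely when some variable of $S$ divides it, i.e.\ when $\supp(\mu) \cap S \neq \emptyset$; and because $I$ is a monomial ideal, $I \subseteq P$ holds exactly when every monomial of $I$ (equivalently, every minimal monomial generator of $I$) lies in $P$. Hence (i) is a restatement of $I \subseteq P$. In particular, if (i) fails then $P$ is not even a prime containing $I$, so throughout the rest of the argument I may assume (i) and must prove that, granting (i), condition (ii) is equivalent to the minimality of $P$ over $I$.

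Next I would invoke the structure theorem for monomial ideals (e.g.\ \cite[Chapter 1]{miller-sturmfels}): every minimal prime of $I$ has the form $(T)$ for some set $T$ of variables. Since every coordinate prime containing $I$ in turn contains a minimal prime of $I$, the minimal primes of $I$ are exactly the minimal elements of $\{(T) : T \text{ a set of variables with } I \subseteq (T)\}$. Hence $P = (S)$ is a minimal prime of $I$ if and only if $I \subseteq (S)$ and no proper subset $T \subsetneq S$ satisfies $I \subseteq (T)$; and since any such $T$ is contained in $S \setminus \{x\}$ for some $x \in S$ (and conversely $S \setminus \{x\} \subsetneq S$), this is equivalent to: $I \subseteq (S)$ together with $I \not\subseteq (S \setminus \{x\})$ for every $x \in S$. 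Finally, $I \not\subseteq (S \setminus \{x\})$ says there is a monomial $\nu \in I$ --- without loss of generality a minimal generator --- with $\supp(\nu) \cap (S \setminus \{x\}) = \emptyset$; combined with (i), which forces $\supp(\nu) \cap S \neq \emptyset$, this says exactly that $x \in \supp(\nu)$ and $x$ is the only element of $S$ lying in $\supp(\nu)$. Quantifying over all $x \in S$ reproduces (ii) verbatim, giving both implications.

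I do not expect a genuine obstacle: the lemma is essentially a repackaging of the coordinate-prime description of minimal primes of a monomial ideal. The only point deserving care is the mild redundancy in (ii): the clause ``$x$ is in the support of $\nu$'' is automatic once (i) holds, so in the direction (minimal $\Rightarrow$ (ii)) I would be sure to deduce it from (i) rather than assume it, and in the direction ((i) and (ii) $\Rightarrow$ minimal) I would simply carry it along unused. I would also flag at the outset that ``monomial $\mu \in I$'' in (i) may be replaced by ``minimal monomial generator of $I$,'' so that the criterion is effectively checkable.
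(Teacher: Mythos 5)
Your proposal is correct and follows essentially the same route as the paper: both identify (i) with the containment $I \subseteq P$, both invoke the fact that minimal primes of monomial ideals are generated by subsets of the variables, and both then translate (ii) into the nonexistence of a proper subset $S' \subsetneq S$ (equivalently, some $S \setminus \{x\}$) with $I \subseteq (S')$. Your observation that the clause ``$x$ is in the support of $\nu$'' in (ii) is redundant given (i) is a small but accurate refinement that the paper leaves implicit.
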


\begin{proof}
Clearly $(i)$ holds if and only if $I \subseteq P$.  It is also clear that $P$ is prime.  It remains to show $(ii)$ holds if and only if that $I \subseteq P' \subseteq P$ for a prime ideal $P'$ implies $P = P'$.

Because the minimal primes of monomial ideals are monomial ideals, it suffices to show that $(ii)$ holds if and only if that $I \subseteq (S') \subseteq P$ for a subset $S'$ of $S$ implies $P = (S')$.  But $(ii)$ is false if and only if there is a proper subset $S'$ of $S$ so that condition $(i)$ holds for $S'$, which is true if and only if $I \subseteq (S') \subset (S)$.  Specifically, if $(ii)$ is false and $x \in S$ is the violating element, set $S'  = S \setminus \{x\}$.  Conversely, if such an $S' \subset S$ exists, take $x$ to be any element of $S \setminus S'$.
\end{proof}

Using Lemma \ref{lem:findminprimes}, we can show that various families of ASMs are not equidimensional (and therefore not Cohen-Macaulay) by finding two minimal primes which we demonstrate to be of different heights. We give such a class of ASMs now.

\begin{figure}[h]
    \centering
    \tikzset{every picture/.style={line width=0.9pt}} %set default line width to 0.75pt        

\begin{tikzpicture}[x=0.9pt,y=0.9pt,yscale=-1,xscale=1]
%uncomment if require: \path (0,300); %set diagram left start at 0, and has height of 300

%Shape: Rectangle [id:dp5045423329947003] 
\draw  [color={rgb, 255:red, 39; green, 62; blue, 71 }  ,draw opacity=1 ] (160,60) -- (360,60) -- (360,260) -- (160,260) -- cycle ;
%Straight Lines [id:da4969205736240263] 
\draw    (220,60) -- (220,130) ;
%Straight Lines [id:da5476233840901135] 
\draw    (320,60) -- (320,150) ;
%Straight Lines [id:da5636366756429319] 
\draw    (300,170) -- (240,170) ;
%Straight Lines [id:da12169626108743681] 
\draw    (300,150) -- (300,170) ;
%Straight Lines [id:da11785421744980162] 
\draw    (300,150) -- (320,150) ;
%Straight Lines [id:da10972850511287202] 
\draw    (320,150) -- (360,150) ;
%Shape: Rectangle [id:dp3086088562437743] 
\draw  [color={rgb, 255:red, 39; green, 62; blue, 71 }  ,draw opacity=1 ][fill={rgb, 255:red, 164; green, 36; blue, 59 }  ,fill opacity=1 ] (160,170) -- (300,170) -- (300,260) -- (160,260) -- cycle ;
%Shape: Rectangle [id:dp9680508367223686] 
\draw  [color={rgb, 255:red, 216; green, 151; blue, 60 }  ,draw opacity=1 ][fill={rgb, 255:red, 216; green, 151; blue, 60 }  ,fill opacity=1 ] (220,60) -- (320,60) -- (320,170) -- (220,170) -- cycle ;
%Shape: Rectangle [id:dp5720508375889275] 
\draw  [draw opacity=0][fill={rgb, 255:red, 216; green, 151; blue, 60 }  ,fill opacity=1 ] (240,130) -- (320,130) -- (320,150) -- (240,150) -- cycle ;
%Straight Lines [id:da9830949281376402] 
\draw [color={rgb, 255:red, 39; green, 62; blue, 71 }  ,draw opacity=1 ]   (180,270) -- (180,242) ;
\draw [shift={(180,240)}, rotate = 90] [color={rgb, 255:red, 39; green, 62; blue, 71 }  ,draw opacity=1 ][line width=0.75]    (10.93,-3.29) .. controls (6.95,-1.4) and (3.31,-0.3) .. (0,0) .. controls (3.31,0.3) and (6.95,1.4) .. (10.93,3.29)   ;
%Straight Lines [id:da7263272116850317] 
\draw [color={rgb, 255:red, 39; green, 62; blue, 71 }  ,draw opacity=1 ]   (260,270) -- (260,242) ;
\draw [shift={(260,240)}, rotate = 90] [color={rgb, 255:red, 39; green, 62; blue, 71 }  ,draw opacity=1 ][line width=0.75]    (10.93,-3.29) .. controls (6.95,-1.4) and (3.31,-0.3) .. (0,0) .. controls (3.31,0.3) and (6.95,1.4) .. (10.93,3.29)   ;
%Straight Lines [id:da8338850621360492] 
\draw [color={rgb, 255:red, 39; green, 62; blue, 71 }  ,draw opacity=1 ]   (180,270) -- (260,270) ;
%Straight Lines [id:da7887967815776058] 
\draw [color={rgb, 255:red, 39; green, 62; blue, 71 }  ,draw opacity=1 ]   (220,270) -- (220,280) ;
%Straight Lines [id:da7939258618899301] 
\draw [color={rgb, 255:red, 39; green, 62; blue, 71 }  ,draw opacity=1 ]   (220,280) -- (410,280) ;
%Straight Lines [id:da6772957836043265] 
\draw [color={rgb, 255:red, 39; green, 62; blue, 71 }  ,draw opacity=1 ]   (410,240) -- (410,280) ;
%Shape: Rectangle [id:dp15897093545370855] 
\draw  [color={rgb, 255:red, 216; green, 201; blue, 152 }  ,draw opacity=1 ][fill={rgb, 255:red, 216; green, 201; blue, 152 }  ,fill opacity=1 ] (160,60) -- (220,60) -- (220,150) -- (160,150) -- cycle ;
%Shape: Rectangle [id:dp08632029949111852] 
\draw  [color={rgb, 255:red, 216; green, 201; blue, 152 }  ,draw opacity=1 ][fill={rgb, 255:red, 216; green, 201; blue, 152 }  ,fill opacity=1 ] (160,150) -- (200,150) -- (200,170) -- (160,170) -- cycle ;
%Shape: Rectangle [id:dp49620811092837847] 
\draw  [color={rgb, 255:red, 39; green, 62; blue, 71 }  ,draw opacity=1 ][fill={rgb, 255:red, 216; green, 201; blue, 152 }  ,fill opacity=1 ] (320,60) -- (360,60) -- (360,150) -- (320,150) -- cycle ;
%Straight Lines [id:da6423750759262148] 
\draw [color={rgb, 255:red, 39; green, 62; blue, 71 }  ,draw opacity=1 ]   (160,170) -- (300,170) ;
%Straight Lines [id:da6934009385430067] 
\draw [color={rgb, 255:red, 39; green, 62; blue, 71 }  ,draw opacity=1 ]   (200,150) -- (200,170) ;
%Straight Lines [id:da43512294072177626] 
\draw [color={rgb, 255:red, 39; green, 62; blue, 71 }  ,draw opacity=1 ]   (220,60) -- (220,170) ;
%Straight Lines [id:da21058537721873982] 
\draw [color={rgb, 255:red, 39; green, 62; blue, 71 }  ,draw opacity=1 ]   (300,170) -- (300,260) ;
%Straight Lines [id:da097330032635782] 
\draw [color={rgb, 255:red, 39; green, 62; blue, 71 }  ,draw opacity=1 ]   (160,60) -- (360,60) ;
%Straight Lines [id:da8076556552157941] 
\draw [color={rgb, 255:red, 39; green, 62; blue, 71 }  ,draw opacity=1 ]   (200,150) -- (220,150) ;
%Straight Lines [id:da845677281710395] 
\draw [color={rgb, 255:red, 39; green, 62; blue, 71 }  ,draw opacity=1 ]   (220,130) -- (240,130) ;
%Straight Lines [id:da16751167877851736] 
\draw [color={rgb, 255:red, 39; green, 62; blue, 71 }  ,draw opacity=1 ]   (240,130) -- (240,170) ;
%Straight Lines [id:da48881832127259783] 
\draw [color={rgb, 255:red, 39; green, 62; blue, 71 }  ,draw opacity=1 ]   (320,170) -- (320,150) ;
%Straight Lines [id:da780326341442797] 
\draw [color={rgb, 255:red, 39; green, 62; blue, 71 }  ,draw opacity=1 ]   (220,150) -- (240,150) ;
%Straight Lines [id:da18533679354914834] 
\draw [color={rgb, 255:red, 39; green, 62; blue, 71 }  ,draw opacity=1 ]   (160,60) -- (160,170) ;
%Straight Lines [id:da08307157393911613] 
\draw [color={rgb, 255:red, 39; green, 62; blue, 71 }  ,draw opacity=1 ]   (200,260) -- (220,260) ;
%Straight Lines [id:da7111134742550993] 
\draw [color={rgb, 255:red, 39; green, 62; blue, 71 }  ,draw opacity=1 ]   (300,170) -- (320,170) ;
%Straight Lines [id:da02046293303750235] 
\draw [color={rgb, 255:red, 39; green, 62; blue, 71 }  ,draw opacity=1 ]   (300,150) -- (320,150) ;
%Straight Lines [id:da3681243766689464] 
\draw [color={rgb, 255:red, 39; green, 62; blue, 71 }  ,draw opacity=1 ]   (300,170) -- (300,150) ;

% Text Node
\draw (180,101.4) node [anchor=north west][inner sep=0.75pt]  [font=\huge]  {$\textcolor[rgb]{0.15,0.24,0.28}{0}$};
% Text Node
\draw (331,100.4) node [anchor=north west][inner sep=0.75pt]  [font=\huge]  {$\textcolor[rgb]{0.15,0.24,0.28}{0}$};
% Text Node
%\draw (204,209.4) node [anchor=north west][inner sep=0.75pt]    {$\textcolor[rgb]{0.15,0.24,0.28}{0}$};
% Text Node
\draw (224,133.4) node [anchor=north west][inner sep=0.75pt]    {$\textcolor[rgb]{0.15,0.24,0.28}{0}$};
% Text Node
\draw (230,80) node [anchor=north west][inner sep=0.75pt]  [color={rgb, 255:red, 255; green, 255; blue, 255 }  ,opacity=1 ] [align=left] {\begin{minipage}[lt]{61.42pt}\setlength\topsep{0pt}
\begin{center}
\textcolor[rgb]{1,1,1}{$\displaystyle 132$}\textcolor[rgb]{1,1,1}{-avoiding}\\\textcolor[rgb]{1,1,1}{permutation*}
\end{center}

\end{minipage}};
% Text Node
\draw (377,200) node [anchor=north west][inner sep=0.75pt]   [align=left] {\textcolor[rgb]{0.15,0.24,0.28}{no essential cells with}\\\textcolor[rgb]{0.15,0.24,0.28}{rank $k$ with }\textcolor[rgb]{0.15,0.24,0.28}{$\displaystyle 0< k< r-1$}};
% Text Node
\draw (205,152.4) node [anchor=north west][inner sep=0.75pt]    {$\textcolor[rgb]{0.15,0.24,0.28}{1}$};
% Text Node
\draw (220,152.4) node [anchor=north west][inner sep=0.75pt]    {$\textcolor[rgb]{0.15,0.24,0.28}{-1}$};
% Text Node
\draw (111,156) node [anchor=north west][inner sep=0.75pt]   [align=left] {\textcolor[rgb]{0.15,0.24,0.28}{row }\textcolor[rgb]{0.15,0.24,0.28}{$\displaystyle r$}};
% Text Node
\draw (304,192) node [anchor=north west][inner sep=0.75pt]   [align=left] {\textcolor[rgb]{0.15,0.24,0.28}{anything}};
% Text Node
\draw (304,155) node [anchor=north west][inner sep=0.75pt]    {$\ast $};
%Text Node
\draw (205,40) node [anchor=north west][inner sep=0.75pt]{$\downarrow$};
%Text Node
\draw (185,25) node [anchor=north west][inner sep=0.75pt]{column $c$};

\end{tikzpicture}

    \caption{(*) Pick a $132$-avoiding permutation whose bottom left boxes is $0$. Then replace the bottom left entry with $-1$ and the bottom right with any entry allowable by the definition of ASM.}
    \label{fig:not_unmixed_family}
\end{figure}

In the proof of the following proposition, we will use the total order $<$ on $[n] \times [n]$ given by $(i,j) < (i',j')$ if $i < i'$ or if $i = i'$ and $j < j'$.  That is, \[
(1,1) < (1,2) < \cdots < (1,n) < (2,1) < \cdots < (n,n-1) < (n,n).
\]

We will use $<$ as the total order for an inductive argument.  Note that $<$ is not related to a total order on the monomials determining the antidiagonal initial ideal $\init I_A$ of the ASM $A$.

\begin{prop}\label{prop:not-equidimensional}
Suppose that $A = (a_{ij}) \in \ASM(n)$ satisfies the following properties:

\begin{enumerate}
    \item There exists $(r,c) \in [n-1] \times [n-2]$ so that the submatrix $B$ of $A$ consisting of rows $\{r-1,r\}$ and columns $\{c,c+1\}$ has the form $B = \ytableausetup{centertableaux}
\begin{ytableau}
 0 & 0 \\
1 & -1
\end{ytableau}$.
    
    \item  If $i \leq r$ and $j \leq c$ and $(i,j) \neq (r,c)$, then $a_{ij} = 0$.
    
    \item If $(i,j) \in \ess(A)$ and $(i,j) \neq (r,c+1)$, then $\rk_A(i,j) = 0$ or $\rk_A(i,j) \geq r-1$.
    
    \item $A$ has no essential cell in column $c$.
    
Then $A$ is not equidimensional.
\end{enumerate}
\label{prop:badblock}
\end{prop}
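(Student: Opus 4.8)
The plan is to exhibit two minimal primes of the monomial ideal $\init I_A$ of different heights. This suffices: by \Cref{prop:initialIdealSplit} and \Cref{prop:components-give-reduced-words} the minimal primes of the squarefree ideal $\init I_A$ are precisely the minimal primes of the ideals $\init I_w$ with $w\in\Perm(A)$, and since each matrix Schubert variety is irreducible of codimension $\ell(w)$ and antidiagonal degeneration preserves codimension, every minimal prime of $\init I_w$ has height $\ell(w)$. Thus the heights occurring among minimal primes of $\init I_A$ are exactly $\{\ell(w):w\in\Perm(A)\}$, so two minimal monomial primes of distinct height force $\Perm(A)$ to contain elements of distinct length, i.e. $A$ to be non-equidimensional. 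Minimality of the two primes will be certified with \Cref{lem:findminprimes}.

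\textbf{Local analysis.} First I would record the consequences of (1)--(4). Conditions (1) and (2) give $\rk_A(r,c)=1$ and $([r]\times[c])\setminus\{(r,c)\}\subseteq\Dom(A)$, so every $z_{i,j}$ with $(i,j)\in\Dom(A)$ is a degree-one Fulton generator of $I_A$. The $-1$ at $(r,c+1)$ must be preceded in column $c+1$ by a $+1$, necessarily in some row $p\le r-2$ (it cannot be in row $r-1$, whose entry is $0$); since partial column sums of an ASM lie in $\{0,1\}$, this forces $\rk_A(p,c+1)=\rk_A(r-1,c+1)=1$ and $\rk_A(r,c+1)=1$. Hence $S^\ast:=\{a\in[r-1]:\rk_A(a,c+1)=1\}$ has at least two elements (it contains $p$ and $r-1$), and by (3) the cell $(r,c+1)$ is the unique essential cell with $0<\rk_A<r-1$, so its Fulton generators are the $2\times2$ minors of $Z_{[r],[c+1]}$. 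Taking antidiagonal leading terms and discarding every monomial divisible by a variable indexed by $\Dom(A)$, I would check that the only new minimal generators of $\init I_A$ contributed by $(r,c+1)$ are the monomials $z_{a,c+1}\,z_{r,c}$ with $a\in S^\ast$. The key point, and the place where (2) and (4) are genuinely used, is that $z_{r,c}$ divides no other minimal generator of $\init I_A$: the rectangle $[r]\times[c]$ is zero off $(r,c)$, and the absence of an essential cell in column $c$ prevents any large-minor generator from placing $z_{r,c}$ on its antidiagonal.

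\textbf{The two primes.} Let $\mathfrak{c}$ be generated by the minimal generators of $\init I_A$ attached to the remaining (rank $\ge r-1$) essential cells; each is an antidiagonal of a minor of size $\ge r\ge 3$, hence involves at least three columns, so it is divisible by no variable of column $c$; together with the privacy of $z_{r,c}$ this lets me choose a minimal prime $Q$ of $\mathfrak{c}$ avoiding $z_{r,c}$ and every $z_{a,c+1}$, $a\in S^\ast$. Set
\[
P_1=\bigl(z_{i,j}:(i,j)\in\Dom(A)\bigr)+(z_{r,c})+Q,\qquad P_2=\bigl(z_{i,j}:(i,j)\in\Dom(A)\bigr)+\bigl(z_{a,c+1}:a\in S^\ast\bigr)+Q.
\]
Both are monomial primes containing $\init I_A$. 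For \Cref{lem:findminprimes}(i): dominant variables and generators of $\mathfrak{c}$ are hit by construction, while each $z_{a,c+1}z_{r,c}$ is hit by $z_{r,c}\in P_1$, respectively by $z_{a,c+1}\in P_2$. For (ii): each dominant variable is itself a generator of $\init I_A$; $z_{r,c}$ is the unique element of $P_1$ in the support of $z_{r-1,c+1}z_{r,c}$ (as $r-1\in S^\ast$ and $(r-1,c+1)\notin\Dom(A)$ force $z_{r-1,c+1}\notin P_1$); each $z_{a,c+1}$ with $a\in S^\ast$ is the unique element of $P_2$ in the support of $z_{a,c+1}z_{r,c}$ (as $z_{r,c}\notin P_2$); and every variable of $Q$ retains a private generator among those of $\mathfrak{c}$, whose supports avoid $\Dom(A)$ and $z_{r,c}$, once $Q$ and its witnessing generators are chosen compatibly (the delicate point, below). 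Hence $P_1,P_2$ are minimal primes of $\init I_A$, and since $\{z_{a,c+1}:a\in S^\ast\}$ meets neither $\{z_{i,j}:(i,j)\in\Dom(A)\}$ nor $Q$,
\[
\operatorname{ht}(P_2)-\operatorname{ht}(P_1)=|S^\ast|-1\ge 1,
\]
so $A$ is not equidimensional.

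\textbf{Main obstacle.} The hard part will be the bookkeeping in the local analysis and in the choice of $Q$: proving that $z_{r,c}$ occurs in no minimal generator of $\init I_A$ except the $z_{a,c+1}z_{r,c}$, and organizing the minimal-prime choices at all the rank-$\ge r-1$ essential cells into a single $Q$ simultaneously compatible with $P_1$ and $P_2$ (so that the $z_{a,c+1}$ really are forced in $P_2$ and $z_{r,c}$ really is forced in $P_1$, without interference from a $z_{a,c+1}$ that happens to appear in some large minor). This is exactly where conditions (3) and (4) do their work, and I expect it to go through by an induction along the total order $<$ on $[n]\times[n]$ fixed above, peeling essential cells off one at a time; everything else is routine verification with \Cref{lem:findminprimes}.
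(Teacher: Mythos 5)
Your overall reduction (produce two minimal monomial primes of $\init I_A$ of different heights, certify minimality with \Cref{lem:findminprimes}) matches the paper, and your identification of $S^*$ and of the degree-two generators $z_{a,c+1}z_{r,c}$, $a\in S^*$, is correct. But the load-bearing claim in your ``local analysis'' is wrong, and the stated justification is a non-sequitur. You assert that $z_{r,c}$ divides no minimal generator of $\init I_A$ other than the $z_{a,c+1}z_{r,c}$, and more specifically that each generator of $\mathfrak{c}$, being the antidiagonal of a minor of size $\ge r\ge 3$ and hence involving at least three columns, ``is divisible by no variable of column $c$.'' Involving several columns does not prevent one of them from being column $c$. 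Concretely, take $r=3$, $c=1$, $n=5$ and
\[
A=\begin{pmatrix}
0&1&0&0&0\\
0&0&1&0&0\\
1&-1&0&0&1\\
0&1&0&0&0\\
0&0&0&1&0
\end{pmatrix}.
\]
One checks that $\ess(A)=\{(2,1),(3,2),(3,4)\}$ with ranks $0,1,2$, so (1)--(4) hold (reading (4), as the paper implicitly does, to exclude only essential cells of positive rank). Here $\Dom(A)=\{(1,1),(2,1)\}$, $S^*=\{1,2\}$, and the rank-$2$ essential cell $(3,4)$ contributes the antidiagonals $z_{1,4}z_{2,3}z_{3,1}$ and $z_{1,4}z_{2,3}z_{3,2}$, both minimal generators of $\init I_A$. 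The first is divisible by $z_{r,c}=z_{3,1}$ and by no $z_{a,c+1}$, so both of your claims fail. The related justification that ``the absence of an essential cell in column $c$ prevents any large-minor generator from placing $z_{r,c}$ on its antidiagonal'' is also incorrect: the essential cell that anchors a Fulton generator need not lie in the same column as any entry of its antidiagonal.

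Once these claims fail, the construction of $Q$ is not a side issue but the entire difficulty. Even after fixing $Q$ to be a minimal prime of $\mathfrak{c}$ supported on columns $>c+1$ (which is possible, since every degree-$\ge r$ antidiagonal has an entry in row $<r$ and column $>c+1$), a variable $z\in Q$ might have all of its private witnesses in $\mathfrak{c}$ divisible by $z_{r,c}$, in which case $z$ is superfluous in $P_1$; similarly a witness might be divisible by some $z_{a,c+1}$, making $z$ superfluous in $P_2$. If $P_1$ and $P_2$ then must be pruned to different subsets of $Q$, the height comparison $\operatorname{ht}(P_2)-\operatorname{ht}(P_1)=|S^*|-1$ is no longer clear. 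This is exactly what the paper's proof is designed to control: it does not try to choose a single $Q$ once and for all, but instead builds two primes $Y_{i,j}$ and $O_{i,j}$ simultaneously by a one-variable-at-a-time induction along the order $<$ on $[n]\times[n]$, proving at each step that the new variable $z_{i',j'}$ is added to $Y$ if and only if it is added to $O$ (using a shift-southeast argument to convert a witness for one into a witness for the other, as in \Cref{fig:shiftsoutheast}), so that the initial discrepancy $|O_{r,n}|>|Y_{r,n}|$ is preserved. Your ``main obstacle'' paragraph correctly senses where the difficulty lies, but it underestimates it and asserts the middle steps as though established; as written, the proof has a genuine gap.
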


\begin{proof}
Recall that $A$ is equidimensional if and only if all minimal primes of $\init I_A$ are of the same height.  We will construct two different minimal primes of $\init I_A$ and show that they have different heights.

For $(i,j) \in [n] \times [n]$, let let \[
Z_{i,j} = \{z_{i',j'} \mid i' \leq i, j' \leq j\} \setminus \{z_{i',j'} \mid (i',j')\in \Dom(A)\}.
\] Let $F_{i,j}$ be the set of Fulton generators of $I_A$ satisfying $\supp(\init F_{i,j}) \subseteq Z_{i,j}$.  Set $I_{i,j} = (\init f \mid f \in F_{i,j})$. 
 %\footnote{mention Fulton generators of partial ASMs}.  
Then $\init I_A = I_{n,n}+(z_{a,b} \mid (a,b) \in \Dom(A))$, and no term of any generator of $I_{n,n}$ is divisible by any $z_{a,b}$ with $(a,b) \in \Dom(A)$. Hence, every minimal $P$ prime of $\init I_A$ will be the sum of a minimal prime $Q$ of $I_{n,n}$ with $(z_{a,b} \mid (a,b) \in \Dom(A))$, and $\codim(P) = \codim(Q)+|\Dom(A)|$. Thus, it suffices to show that $I_{n,n}$ has two or more minimal primes of different heights.

We will show that, for all $(r,n) \leq (i,j) \leq (n,n)$, $I_{i,j}$ has two or more minimal primes of different heights. We will proceed by induction. We begin by finding two different minimal primes of $I_{r,n}$ which have different heights. 

Before giving the formal argument, we recommend a visualization: Draw the grid of points representing the variables $z_{i,j}$ in a generic $n \times n$ matrix. For each minimal generator $\mu$ of $I_{r,n}$, we draw a wire connecting the set of points corresponding to variables in the support of $\mu$. Figure \ref{fig:smallbadblock} shows such a drawing for a possible $I_{r,n}$. 

\begin{figure}[htp]
\begin{center}
\begin{tikzpicture}
\foreach \x in {0, 1, 2, ..., 10} {
    \foreach \y in {0, 1, 2, 3, 4} {
        \node[draw=black, fill=black, circle, scale=0.5] at (\x, \y) {};
    }
} 
\foreach \x in {1,2,3,4,5} {
\draw [yellow, thick](\x,0)circle(.2);
\draw [yellow, thick](\x,0)circle(.18);
\draw [yellow, thick](\x,0)circle(.16);
\draw [yellow, thick](\x,0)circle(.14);
\draw [yellow, thick](\x,0)circle(.12);
}

\foreach \y in {1,2} {
\draw [orange, thick](2,\y)circle(.2);
\draw [orange, thick](2,\y)circle(.18);
\draw [orange, thick](2,\y)circle(.16);
\draw [orange, thick](2,\y)circle(.14);
\draw [orange, thick](2,\y)circle(.12);
}

\foreach \x in {6,7,8,9} {
\draw [orange, thick](\x,4)circle(.2);
\draw [orange, thick](\x,4)circle(.18);
\draw [orange, thick](\x,4)circle(.16);
\draw [orange, thick](\x,4)circle(.14);
\draw [orange, thick](\x,4)circle(.12);
}

\foreach \y in {0,1,2,3,4} {
\node[scale = 2] at (0,\y){$\ast$};
}

\foreach \y in {1,2,3,4} {
\node[scale = 2] at (1,\y){$\ast$};
}

\foreach \y in {3,4} {
\node[scale = 2] at (2,\y){$\ast$};
}

\draw[color=blue] (1,0)--(2,1);
\draw[color=blue] (1,0)--(2,2);
\draw[color=green] (1,0)--(3,1);
\draw[color=green] (1,0)--(4,1);
\draw[color=green] (1,0)--(5,1);
\draw[color=green] (1,0)--(6,1);
\draw[color=green] (2,0)--(3,1);
\draw[color=green] (2,0)--(4,1);
\draw[color=green] (2,0)--(5,1);
\draw[color=green] (2,0)--(6,1);
\draw[color=green] (3,0)--(4,1);
\draw[color=green] (3,0)--(5,1);
\draw[color=green] (3,0)--(6,1);
\draw[color=green] (4,0)--(5,1);
\draw[color=green] (4,0)--(6,1);
\draw[color=green] (5,0)--(6,1);
\draw[color=green] (3,1)--(4,2);
\draw[color=green] (3,1)--(5,2);
\draw[color=green] (3,1)--(6,2);
\draw[color=green] (3,1)--(7,2);
\draw[color=green] (4,1)--(5,2);
\draw[color=green] (4,1)--(6,2);
\draw[color=green] (4,1)--(7,2);
\draw[color=green] (5,1)--(6,2);
\draw[color=green] (5,1)--(7,2);
\draw[color=green] (6,1)--(7,2);
\draw[color=green] (4,2)--(5,3);
\draw[color=green] (4,2)--(6,3);
\draw[color=green] (4,2)--(7,3);
\draw[color=green] (4,2)--(8,3);
\draw[color=green] (5,2)--(6,3);
\draw[color=green] (5,2)--(7,3);
\draw[color=green] (5,2)--(7,3);
\draw[color=green] (6,2)--(7,3);
\draw[color=green] (6,2)--(8,3);
\draw[color=green] (7,2)--(8,3);
\draw[color=green] (5,3)--(6,4);
\draw[color=green] (5,3)--(7,4);
\draw[color=green] (5,3)--(8,4);
\draw[color=green] (5,3)--(9,4);
\draw[color=green] (6,3)--(7,4);
\draw[color=green] (6,3)--(8,4);
\draw[color=green] (6,3)--(9,4);
\draw[color=green] (7,3)--(8,4);
\draw[color=green] (7,3)--(9,4);
\draw[color=green] (8,3)--(9,4);

\node at (-2,0) {Row $r=5$};
 \node[draw=black, fill=black, circle, scale=0.5] at (-5, 4){};
 \node[scale = 2] at (-5,4){$\ast$};
\node at (-2.5,4) {Degree $1$ monomials};
\node at (-2.5,2.5) {Degree $2$ monomials};
\draw[color=blue] (-5.3,2)--(-4.5,3);
\node at (-2.5,1) {Degree $r=5$ monomials};
\draw[color=green] (-5.3,0.5)--(-4.5,1.5);

\draw [thick] (0.5,-0.5) rectangle (2.5,1.5);
\end{tikzpicture}
\end{center}
    \caption{The $r \times n = 5 \times 11$ grid corresponding to the variables $z_{i,j}$ for $(i,j) \in [r] \times [n]$. Elements of $\Dom(A)$ are denoted by stars. The elements of $O_{r,n}$ are circled in orange, and those of $Y_{r,n}$ are circled in yellow.  Minimal generators of $I_{r,n}$ are indicated with blue and green wires connecting elements of $Y_{r,n}$ to elements of $O_{r,n}$.  In this example, $c=2$.  The submatrix $B$ is boxed.}
    \label{fig:smallbadblock}
\end{figure}

Given such a drawing for $I_{r,n}$, we draw an orange circle around the northeast vertex of each minimal monomial generator of $I_{r,n}$. Call this set of vertices $O_{r,n}$. Now we draw a yellow circle around the southwest vertex of each minimal monomial generator of $I_{r,n}$, and call this set of vertices $Y_{r,n}$. 

More formally, if $\mu = \init f$ for some $f \in F_{r,n}$, then, with respect to the  order $<$ on $[n] \times [n]$, let 
\[
O(\mu) = \min\{(a,b) \mid z_{a,b} \in \supp(\mu)\}.
\] and \[
O_{r,n} = \{z_{O(\mu)} \mid \mu = \init f, f \in F_{r,n}\}.
\]  Similarly, let \[
Y(\mu) = \max\{(a,b) \mid z_{a,b} \in \supp(\mu)\}
\] and \[
Y_{r,n} = \{z_{Y(\mu)} \mid \mu = \init f, f \in F_{r,n}\}.
\]

Because each minimal generator $\mu$ of $\init I_A$ of degree $k$ is formed by the product of variables along the antidiagonal of a generic matrix, the $k$ variables in the support of $\mu$ have distinct row indices and distinct column indices. The generators of $I_{r,n}$ are supported only on variables in the first $r$ rows of a generic matrix, and so every generator of degree $r$ in $I_{r,n}$ is supported on some variable in row $1$ and some variable in row $r$ (and on some variable in each row in between).

By property (3) of the hypotheses, each $f \in F_{r,n}$ is either of degree $r$ or is determined by the essential cell $(r,c+1)$. By property (2) of the hypotheses, any $f \in F_{r,n}$ determined by the essential cell $(r,c+1)$ must be the determinant of a $2 \times 2$ submatrix whose bottom row is $[z_{r,c}$ $z_{r,c+1}]$. Thus, all minimal generators of $I_{r,n}$ are divisible by some variable in row $r$.  Hence, we have $Y_{r,n} = \{z_{r,c}, \ldots, z_{r, c+i}\}$ for some $i \geq 0$. 

Similarly, $O_{r,n}$ will consist of the set of variables $\{z_{1,c+r},\ldots, z_{1,c+i+r-1}\}$ together with at least two more variables in column $c+1$, determined by the degree $2$ minimal generators of $I_{r,n}$. Hence, $|O_{r,n}| \geq |Y_{r,n}|+1$.

By construction, both $Y_{r,n}$ and $O_{r,n}$ satisfy the conditions of Lemma \ref{lem:findminprimes}, and so both $(Y_{r,n})$ and $(O_{r,n})$ are minimal primes of $I_{r,n}$. Now $\codim((O_{r,n})) = |O_{r,n}| >|Y_{r,n}| = \codim((Y_{r,n}))$, and so $I_{r,n}$ is not height unmixed.

For $(i,j)$ satisfying $(r,n)<(i,j) \leq (n,n)$, we will define sets $Y_{i,j}$ and $O_{i,j}$ inductively. We will then argue, for all such $(i,j)$, that $|Y_{i,j}|<|O_{i,j}|$ and that both $(Y_{i,j})$ and $(O_{i,j})$ are minimal primes of $I_{i,j}$.

Suppose that $(r,n)\leq (i,j) <(i',j')\leq (n,n)$ and that $(i',j')$ covers $(i,j)$ in the order $<$, i.e., either $j<n$ and $(i',j') = (i,j+1)$ or $j = n$ and $(i',j') = (i+1,1)$.

Define

\begin{equation*}
Y_{i',j'} =
    \begin{cases}
        Y_{i,j} \cup \{z_{i',j'}\} & \text{if } I_{i',j'} \not\subseteq (Y_{i,j}).\\
        Y_{i,j} & \text{if } I_{i',j'} \subseteq (Y_{i,j}).
    \end{cases}
\end{equation*}

and 

\begin{equation*}
O_{i',j'} =
    \begin{cases}
        O_{i,j} \cup \{z_{i',j'}\} & \text{if } I_{i',j'} \not\subseteq (O_{i,j}).\\
        O_{i,j} & \text{if } I_{i',j'} \subseteq (O_{i,j}).
    \end{cases}
\end{equation*}

Note that the minimal generators of $I_{i',j'}$ that are not elements of $I_{i,j}$ are exactly those that are divisible by $z_{i',j'}$.  Thus, $I_{i',j'} \not\subseteq (O_{i,j})$ (respectively, $I_{i',j'} \not\subseteq (Y_{i,j})$) if and only if there exists some minimal generator $\mu$ of $I_{i',j'}$ divisible by $z_{i',j'}$ that is not divisible by any element of $O_{i,j}$ (respectively, of $Y_{i,j}$).  Hence, arguing by induction, it follows from Lemma \ref{lem:findminprimes} that both $(O_{i',j'})$ and $(Y_{i',j'})$ are minimal primes of $I_{i',j'}$

We now claim that, for all $(i',j')\geq (r,n)$, $z_{i',j'} \in Y_{i',j'}$ if and only if $z_{i',j'} \in O_{i',j'}$. From this claim it will be immediate that $|Y_{i',j'}|>|O_{i',j'}|$ and, in particular, that $|Y_{n,n}|>|O_{n,n}|$.

Noting that the claim is true for $(r,n)$ itself, we fix some $(i,j)$ satisfying $(r,n) \leq (i,j) < (n,n)$ and assume the truth of the claim for all $(a,b)$ satisfying $(r,n) \leq (a,b) \leq (i,j)$. It then suffices to prove the claim for $(i',j')$ covering $(i,j)$.

Suppose that $z_{i',j'} \in Y_{i',j'}$, equivalently $I_{i',j'} \not\subseteq (Y_{i,j})$. Then there exists some generator $\mu$ of $I_{i',j'}$ so that $z_{i',j'} \mid \mu$ and $\mu \notin (Y_{i,j})$. If $\mu$ is the initial term of a Fulton generator determined by an essential cell whose column index is $<c$, then $\deg(\mu) = 0$, and so $\mu \in \Dom(A)$, a contradiction.  Hence, by property (4) of the hypotheses, $\mu$ is the initial term of a Fulton generator determined by an essential cell whose column index is $>c$. Call that essential cell $(r',c')$. 

Let $\nu$ be the product of the variables dividing $\mu$ whose row index is at most $r$.  We consider two cases: $\deg(\nu) = r$ and $\deg(\nu)<r$.

First suppose that $\deg(\nu) = r$.  By property (2) of the hypotheses, we know that $\nu$ is divisible only by variables in columns with index at least $c$.  By construction of $Y_{r,n}$ and $O_{r,n}$, an antidiagonal of length $r$ in the first $r$ rows of $Z$ and columns weakly east of $c$ that is disjoint from $\Dom(A)$ either contains both an element of $Y_{r,n}$ and an element of $O_{r,n}$ or neither. Thus, $\mu \notin (Y_{i,j})$ implies $\nu \notin (Y_{r,n})$ implies $\nu \notin (O_{r,n})$.  Because $Y_{i,j}$ and $O_{i,j}$ agree below row $r$ by induction, it follows that $\mu \notin (O_{i,j})$.

Alternatively, suppose $\deg(\nu)<r$. Let $Z'$ be the $\deg(\nu) \times \deg(\nu)$ submatrix of $Z$ with row indices $\{r-\deg(\nu)+1, \ldots, r\}$ and column indices $\{c'-\deg(\nu)+1, \ldots, c'\}$. Then set $\mu' = (z_{r,c'-\deg(\nu)+1}z_{r-1,c'-\deg(\nu)+2} \cdots z_{r-\deg(\nu)+1,c'})\mu/\nu$, and note that $\mu' \in I_{i',j'}$ because it is the product of the antidiagonal entries of $Z'$ with the antidiagonal entries of the submatrix of $Z$ below row $r$ giving rise to $\nu$. That is, it is the product of antidiagonal entries of a $\deg(\mu) \times \deg(\mu)$ submatrix of $Z$ weakly northwest of $(r',c')$ and so the lead term of an element of $F_{i',j'}$.  Said otherwise, we are replacing the submatrix giving rise to $\mu$ with a submatrix whose first $\deg(\nu)$ rows and final $\deg(\nu)$ columns are maximally southeast subject to the condition of remaining weakly north of row $r$ and weakly west of the essential box giving rise to $\mu$. See \Cref{fig:shiftsoutheast} for an illustration of the replacement of $\mu$ by $\mu'$.

\begin{figure}[htp]
\[
A = \begin{pmatrix}
    0 & 0 & 0 & 1 & 0 & {\color{orange}\boxed{\color{black}0}} & 0 & 0\\
    0 & 0 & {\color{orange}\boxed{\color{black}1}} & 0 & 0 & 0 & 0 & 0\\
    0 & 0 & {\color{orange}\boxed{\color{black}0}} & 0 & 1 & 0 & 0 & 0\\
    0 & {\color{yellow}\boxed{\color{black}1}} & \boxed{{\color{yellow}\boxed{\color{black}-1}}} & 0 & 0 & \boxed{0} & 1 & 0\\
    1 & 0 & 0 & 0 & 0 & 0 & 0 & 0\\
    0 & 0 & 1 & 0 & 0 & 0 & 0 & 0\\
    0 & 0 & 0 & 0 & 0 & \boxed{0} & 0 & 1\\
    0 & 0 & 0 & 0 & 0 & 1 & 0 & 0
\end{pmatrix}
\]
    \caption{In this example, $r = 4$, $c=2$, and $n=8$. Notice that $\ess(A) \setminus \Dom(A) = \{(4,3), (4,6), (7,6)\}$, whose locations are indicated with a black box.  Locations of elements of $Y_{6,8}$ are indicated with a yellow box, and locations of elements of $O_{6,8}$ are indicated with an orange box. Consider $(r',c') = (7,6)$, and note $\rk_A(7,6) = 5$.  Then $\mu = (z_{1,6}z_{2,5}z_{3,4})(z_{5,3}z_{6,2}z_{7,1})$ witnesses $I_{7,1} \not\subseteq (Y_{6,8})$ but does not witness $I_{7,1} \not\subseteq (O_{6,8})$.  The modification $\mu' = (z_{2,6}z_{3,5}z_{4,4})(z_{5,3}z_{6,2}z_{7,1})$ witnesses $I_{7,1} \not\subseteq (O_{6,8})$.}
    \label{fig:shiftsoutheast}
\end{figure}

Because $\deg(\nu)<r$, $\mu'$ is not divisible by any element in row $1$ or any element in column $c+1$ except possibly $z_{r,c+1}$.  Hence, using that $O_{i,j}$ and $Y_{i,j}$ agree by induction below row $r$, $\mu' \notin (O_{i,j})$.  

Thus, in all cases, $I_{i',j'} \not\subseteq (O_{i,j})$, and so $z_{i',j'} \in O_{i',j'}$, as desired. 

The proof that $z_{i',j'} \in O_{i',j'}$ implies $z_{i',j'} \in Y_{i',j'}$ is analogous. We conclude by induction that $|O_{n,n}|<|Y_{n,n}|$ and, having shown that both $(O_{n,n})$ and $(Y_{n,n})$ are minimal primes for $I_{n,n}$, that $I_{n,n}$ and so also $\init I_A$ are not height unmixed, which implies that $A$ is not equidimensional. \end{proof}

%%%%%%%%%%%%%%%%%%%
\section*{Acknowledgements}
This project began as part of the University of Minnesota School of Mathematics Summer 2022 REU program. The authors would like to thank John O'Brien for his assistance as a TA during that program as well as all of the REU organizers, mentors, and participants for valuable feedback throughout the summer.  They particularly thank Sterling Saint Rain for very regular and engaged participation.

Additionally they would like to thank Mike Cummings and  Adam Van Tuyl for sharing an early version of their new Macaulay2 package GeometricDecomposability, now described in \cite{CV24}, and for additional coding advice.  

The authors thank Anna Weigandt for helpful conversations and Frank Sottile for communication concerning \cite{BB03} and \cite{BS98}.

\bibliography{bibliography}{}
\bibliographystyle{alpha}

\end{document}